\newtheorem{theorem}{Theorem}[section]
\newtheorem{lemma}[theorem]{Lemma}
\newtheorem{claim}[theorem]{Claim}
\newtheorem{proposition}[theorem]{Proposition}
\newtheorem{corollary}[theorem]{Corollary}
\newtheorem{assumption}[theorem]{Assumption}
\theoremstyle{remark}
\newtheorem{remark}[theorem]{Remark}
\newtheorem{definition}[theorem]{Definition}
\numberwithin{equation}{section}
\newcommand{\M}{\mathbb{M}}
\newcommand{\R}{\mathbb{R}}
\newcommand{\Z}{\mathbb{Z}}
\newcommand{\N}{\mathbb{N}}
\def\P{\mathbb{P}} 
\newcommand{\E}{\mathbb{E}}
\newcommand{\F}{\mathcal{F}}
\newcommand{\calS}{\mathcal{S}}
\newcommand{\id}{\mathds{1}}
\newcommand{\var}{\text{\rm{Var}}}
\newcommand{\eps}{\varepsilon}
\newcommand{\cross}{\text{\textup{Cross}}}
\newcommand{\nodcross}{\text{\textup{NodalCross}}}
\newcommand{\nodarm}{\text{\textup{NodalArm}}}
\newcommand{\ann}{\text{\textup{Ann}}}
\newcommand{\arm}{\text{\textup{Arm}}}
\newcommand{\Rev}{\text{\textup{Rev}}}
\newcommand{\Inf}{\text{\textup{Inf}}}
\begin{document}

\title[Percolation of strongly correlated Gaussian fields II. Sharpness]{Percolation of strongly correlated Gaussian fields~II. Sharpness of the phase transition}
\author{Stephen Muirhead}
\email{smui@unimelb.edu.au}
\address{School of Mathematics and Statistics, University of Melbourne}
\begin{abstract}
We establish the sharpness of the phase transition for a wide class of Gaussian percolation models, on $\Z^d$ or $\R^d$, $d \ge 2$, with correlations decaying at least algebraically with exponent $\alpha > 0$, including the discrete Gaussian free field ($d \ge 3, \alpha = d-2$), the discrete Gaussian membrane model ($d \ge 5, \alpha = d - 4$), and many other examples both discrete and continuous. In particular we do not assume positive correlations. This result is new for all strongly correlated models (i.e.\ $\alpha \in (0,d]$) in dimension $d \ge 3$ except the Gaussian free field, for which sharpness was proven in a recent breakthrough by Duminil-Copin, Goswami, Rodriguez and Severo; even then, our proof is simpler and yields new near-critical information on the percolation density.

For planar fields which are continuous and positively-correlated, we establish sharper bounds on the percolation density by exploiting a new `weak mixing'  property for strongly correlated Gaussian fields. As a byproduct we establish the box-crossing property for the nodal set, of independent interest. 

This is the second in a series of two papers studying level-set percolation of strongly correlated Gaussian fields, which can be read independently.\end{abstract}
\thanks{The author was supported by the Australian Research Council (ARC) Discovery Early Career Researcher Award DE200101467. Part of this research was undertaken while the author was visiting the Statistical Laboratory at the University of Cambridge, and we thank the University for its hospitality. The author would like to thank Roland Bauerschmidt, Vivek Dewan, Alexis Pr\'{e}vost, Franco Severo and Hugo Vanneuville for helpful discussions, Franco for pointing out the references \cite{bgm04,bau13}, and an anonymous referee for valuable feedback on an earlier version.}
\keywords{Gaussian fields, percolation, strongly correlated systems}
\date{\today}
\maketitle

\section{Introduction}
Classically, a percolation model is said to have a \textit{sharp phase transition} if there is a critical parameter $\ell_c$ separating a supercritical \textit{percolating phase} from a subcritical phase of \textit{exponential decay of connectivity}. For \textit{strongly correlated} models -- meaning that the correlations are not integrable at infinity --  the subcritical decay of connectivity can be much slower; in this context \textit{sharpness} refers to the decay being of the same rate, up to constants in the exponent, throughout the subcritical phase. To put it slightly differently, sharpness means that all natural definitions of the critical parameter that relate to the subcritical decay of connectivity ultimately coincide.

\smallskip
The proof of the sharp phase transition for Bernoulli percolation, by Menshikov \cite{men86} and Aizenman-Barsky \cite{ab87}, was a landmark result in percolation theory. Both proofs deduce sharpness as a consequence of differential inequalities; for Menshikov this was
\begin{equation}
\label{e:mens1}
 \theta_R'(\ell) \ge c \, \theta_R(\ell) \Big( \frac{ 1}{ \frac{1}{R} \sum_{r = 1}^{R} \theta_r(\ell) }  - 1 \Big) , 
 \end{equation}
where $\theta_R(\ell)$ denotes the probability that the origin is connected to the boundary of the box $\Lambda_R = [-R,R]^d$ under the parameter $\ell$ and $c > 0$ is a constant, whereas for Aizenman-Barsky it was a pair of partial differential inequalities involving an additional `magnetisation' parameter. As well as establishing sharpness, these inequalities yield near-critical information on the percolation density in the form of the \textit{mean-field bound}
\begin{equation}
\label{e:mfb}
\theta(\ell)  := \lim_{R \to \infty} \theta_R(\ell)  \ge c  (\ell - \ell_c)    
\end{equation}
for a constant $c > 0$ and $\ell > \ell_c$ sufficiently close to $\ell_c$. See also \cite{dct16,dcrt19a,van22} for more recent proofs of these results, and \cite{dcrt19b,s21} for extensions to models whose correlations are unbounded but short-range (i.e.\ integrable at infinity).
 
 \smallskip
For strongly correlated models, despite extensive progress understanding the geometric properties of \textit{strongly} subcritical and supercritical phases \cite{drs14,pr15,grs21,cn21,ms22}, it has remained a significant challenge to prove sharpness. An indication of the difficulty is that one cannot hope to establish the inequalities in \cite{men86,ab87} in general, since this would imply exponential decay in the subcritical regime, which does not occur for all strongly correlated models.

\smallskip
In two recent breakthroughs -- for the Poisson-Boolean model with unbounded radius distribution \cite{dcrt20}, and for level-set percolation of the discrete Gaussian free field (GFF) \cite{dgrs20} -- sharpness was established using a novel `proof by contradiction'. More precisely, in these proofs sharpness was first assumed to be false, and then it was shown that the Menshikov-type differential inequality 
\begin{equation}
\label{e:mens2}
 \theta_R'(\ell) \ge \frac{ c \, \theta_R(\ell)}{ \frac{1}{R} \sum_{r = 1}^{R} \theta_r(\ell) } 
 \end{equation} 
holds within a \textit{ficticious regime} of parameters; although \eqref{e:mens2} implies exponential decay of subcritical connectivity, the use of the ficticious regime ensures there is no conflict with the lack of exponential decay in these models. While \eqref{e:mens2} was established directly for the Poisson-Boolean model, for the GFF an \textit{interpolation scheme} was used to compare the model to a finite-range approximation, for which \eqref{e:mens2} is simpler to establish. Given the use of a ficticious regime, this proof does not yield near-critical information comparable to~\eqref{e:mfb}. We also mention  the recent work \cite{dt22} which studies sharpness in the strongly correlated Poisson-Boolean model using a very different approach specific to that model.
  
\smallskip
In this paper we prove sharpness for a wide class of strongly correlated Gaussian percolation models, which includes the GFF but also other models of interest. Without working in a ficticious regime, we derive a new Menshikov-type inequality
\begin{equation}
\label{e:mens3}
 \theta_R'(\ell) \ge \frac{c \, \theta_R(\ell)}{ \big( \frac{1}{R} \sum_{r = 1}^{R} \theta_r(\ell) \big)^\gamma }  ,  
 \end{equation}
where $\gamma \in (0,1]$ is a constant depending on the model; while weaker than \eqref{e:mens2}, this inequality is still powerful enough to deduce sharpness, and unlike \eqref{e:mens2} is not in conflict with a lack of exponential decay.

\smallskip
Our approach is close in spirit to \cite{dcrt20}, and has several advantages over the proof of sharpness for the GFF in \cite{dgrs20}. First, it is technically much simpler and extends immediately to a wide class of Gaussian percolation models, both discrete and continuous. Second, it yields near-critical information on the percolation density comparable to \eqref{e:mfb}. 

\smallskip
On the other hand, the proof in \cite{dgrs20} yielded different, non-comparable, information about the supercritical regime, namely that the percolating component satisfies a strong local uniqueness property; one can view this as establishing `supercritical sharpness', in contrast with the classical `subcritical' notion of sharpness consider in this paper (see also Remark~\ref{r:supercrit}). It remains an interesting open problem to obtain the analogous supercritical sharpness for a more general class of strongly correlated Gaussian models. On this point, we believe the approach in \cite{dgrs20} could be adapted to cover other \textit{discrete} Gaussian fields besides the GFF, although extending the technique to smooth fields appears challenging.

\smallskip
One commonality among \cite{dcrt20, dgrs20} and the present paper is that they all exploit a \textit{finite-range decomposition} of the models, i.e.\ the fact that the models can be represented as a sum/superposition of \textit{independent} finite-range models on different scales. It remains an ongoing challenge to prove sharpness for strongly correlated models without such a decomposition (see Section \ref{s:open} for further discussion).  

\smallskip
This is the second in a series of two papers studying level-set percolation of strongly correlated Gaussian fields, which can be read independently. The first paper \cite{ms22} gave precise estimates on the decay of $\theta_R(\ell)$ in \textit{strongly subcritical} regimes of strongly correlated Gaussian models, but did not consider the question whether the strongly subcritical regime in fact coincides with the entire subcritical regime. One consequence of the results in this paper is to confirm the coincidence of these regimes (see Corollary~\ref{c:main1} and the surrounding discussion), at least for a subset of the models considered in \cite{ms22}.

\subsection{Main result}
Our main result applies to a class of Gaussian fields which we denote by $\F = \cup_{\alpha > 0} \F_\alpha$ (for `finite-range decomposition'). To keep the discussion brief we defer the definition to Section~\ref{s:class} (see Definition \ref{d:deff}); for now we mention that $\F_\alpha$ consists of stationary Gaussian fields on either $\Z^d$ or $\R^d$, $d \ge 2$, which possess a certain finite-range decomposition and whose (not necessarily positive) correlations satisfy a polynomial decay bound with exponent~$\alpha$. Although $f \in \F$ need not have algebraic correlations $K(x) \sim c |x|^{-\alpha}$, if it does then $f \in \cup_{\alpha' \le \alpha} \F_{\alpha'}$, and in many cases $f \in \F_\alpha$. We present examples in Section~\ref{s:ex}.

\smallskip For a Gaussian field $f \in \F$ and closed sets $A, B \subset \R^d$, we denote by $\{A \leftrightarrow B\}$ the event that $\{f \ge 0\}$ contains a path, defined on either the lattice $\Z^d$ or in $\R^d$ depending on the domain of the field, that intersects both $A$ and $B$. We let $\P_\ell$ denote the law of the field $f + \ell$, abbreviating $\P = \P_0$, and we denote by
\[ \ell_c =  \inf \big\{ \ell : \P_\ell[ 0 \leftrightarrow \infty ]  > 0  \big\} \in [-\infty,\infty] \]
the usual critical parameter of the model. With this notation we define, for $\ell \in \R$ and $R \ge 1$, 
\begin{equation}
\label{e:thetar}
\theta_R(\ell) := \P_\ell[ \Lambda_{1/2} \leftrightarrow \partial \Lambda_R] \qquad \text{and} \qquad \theta(\ell) := \P_\ell[ \Lambda_{1/2} \leftrightarrow \infty]  := \lim_{R \to \infty} \theta_R(\ell).
\end{equation}
The use of $\Lambda_{1/2}$ instead of the origin in \eqref{e:thetar} is mainly for technical reasons; see item (4) in the remarks at the end of this subsection.

\smallskip
As a preliminary, we verify the non-triviality of the phase transition for this class of models:

\begin{proposition}[Non-triviality]
\label{p:nontriv}
For a Gaussian field $f \in \F$, $\ell_c \in (-\infty,\infty)$.
\end{proposition}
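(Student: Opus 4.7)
My plan is to prove both $\ell_c > -\infty$ and $\ell_c < \infty$ separately via block renormalization, exploiting the finite-range decomposition built into the definition of $\F_\alpha$. Concretely, for a scale $L$ to be chosen, I write $f = f_{\le L} + f_{> L}$ as a sum of two independent stationary centred Gaussian fields, with $f_{\le L}$ of range at most $L$ and $f_{> L}$ of bounded variance. In both directions, the role of $f_{> L}$ is confined to a single Gaussian sup-concentration estimate (Borell--TIS in the continuous case, a union bound in the discrete case), which at scale $\Lambda_R$ yields $\P[\sup_{\Lambda_R}|f_{> L}|>M]\le |\Lambda_R|\exp(-cM^2)$.

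For $\ell_c<\infty$, I would fix $L$ and $M$ and declare a scale-$L$ block $B=\Lambda_L(y)$ \emph{good} if $f_{\le L}\ge -\ell+M$ and $f_{> L}\ge -M$ throughout a slightly enlarged cube $B'$. On a good block, $f+\ell>0$ on $B'$, so $B$ is crossed by $\{f+\ell\ge 0\}$ in every direction, and crossings on adjacent good blocks merge. For $\ell$ sufficiently large, each block is good with probability as close to $1$ as desired. The key independence is that $f_{\le L}$ restricted to cubes at lattice distance $>2L$ is independent; goodness of two such cubes then depends on $f_{>L}$ only through the single sup-concentration event. Absorbing that via a union bound, a Liggett--Schonmann--Stacey-type domination produces a supercritical Bernoulli site percolation on the coarse block lattice, giving an infinite good-block component and hence $\theta(\ell)>0$.

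For $\ell_c>-\infty$, I would run the decomposition in reverse: on the event $\{\sup_{\Lambda_R}|f_{> L}|\le M\}$ one has
\[
\{f+\ell\ge 0\}\cap\Lambda_R\subset\{f_{\le L}\ge -\ell-M\}\cap\Lambda_R.
\]
Declare a scale-$L$ block \emph{bad} if $f_{\le L}$ takes a value $\ge -\ell-M$ somewhere in it. For $\ell$ sufficiently negative, a union bound together with the Gaussian tail bound $\P[f_{\le L}(x)\ge -\ell-M]\le \exp(-c\ell^2)$ shows that each block is bad with probability at most $L^d \exp(-c\ell^2)$, arbitrarily small. Because $f_{\le L}$ has range $\le L$, bad events on non-adjacent blocks are independent, so bad blocks are stochastically dominated by a subcritical Bernoulli site field. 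Any $\{f+\ell\ge 0\}$-path from $\Lambda_{1/2}$ to $\partial\Lambda_R$ must traverse a chain of bad blocks from the origin to $\partial\Lambda_R$, which by subcriticality has probability at most $\exp(-cR)$. Combining with the sup-concentration bound and letting $R\to\infty$ gives $\theta(\ell)=0$.

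The main obstacle is that $f_{> L}$ is genuinely infinite-range, preventing a direct Bernoulli comparison. The remedy, used twice, is to confine its effect to a single concentration estimate whose contribution decays faster than any polynomial in $R$, after which the finite-range part $f_{\le L}$ carries the percolation argument. A secondary technical point is that in the continuous setting the sup over $\Lambda_R$ must be controlled via Borell--TIS together with a regularity estimate on $f_{> L}$ rather than a naive union bound, so one must verify that the finite-range decomposition defining $\F_\alpha$ supplies these inputs uniformly in $L$.
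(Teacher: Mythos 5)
Your single-scale decomposition $f=f_{\le L}+f_{>L}$ with a \emph{fixed} cutoff $L$ and a \emph{fixed} threshold $M$ cannot close either direction of the argument, because $f_{>L}$ is a non-degenerate stationary Gaussian field of infinite range: its supremum over $\Lambda_R$ grows like $\sqrt{\log R}$, so the estimate you rely on, $\P[\sup_{\Lambda_R}|f_{>L}|>M]\le |\Lambda_R|e^{-cM^2}$, \emph{diverges} as $R\to\infty$ for fixed $M$. Concretely: in the supercritical direction, the good-block field as you define it (which involves $f_{>L}$ on each block) is not finitely dependent, so Liggett--Schonmann--Stacey does not apply to it; and if instead you define goodness via $f_{\le L}$ alone and absorb $f_{>L}$ by a global sup event, you only get $\theta_R(\ell)\ge c-(2R)^d e^{-cM^2}$, which is vacuous for large $R$ --- indeed $\inf_{\R^d}f_{>L}=-\infty$ a.s., so no fixed $M$ controls the tail field along an infinite cluster. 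In the subcritical direction the same term gives $\theta_R(\ell)\le e^{-cR/L}+(2R)^d e^{-c\ell^2}$; optimising over $R$ shows only that $\theta(\ell)\to 0$ as $\ell\to-\infty$, which does not rule out $\ell_c=-\infty$.

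The missing idea is that the decomposition scale must grow with $R$ and the argument must be iterated across scales (``sprinkled bootstrapping''). Writing $f=f_R+g_R$ with $f_R$ of range $R$, the decay assumption \eqref{e:fdecay} gives $\var[g_R(0)]\le cR^{-\alpha}$, whence by Borell--TIS (plus Kolmogorov's theorem for the supremum in the continuous case) one obtains $\P[\sup_{\Lambda_R}|g_R|\ge(\log R)^{-(1+\delta)}]\le e^{-cR^{\psi}}$; one then runs the renormalisation along a super-exponentially growing sequence of scales, sprinkling the level by $(\log R_n)^{-(1+\delta)}$ at the $n$-th step so that the total sprinkling converges. This is exactly what the paper does: it verifies the hypotheses of Proposition \ref{p:boot} (quoted from \cite{ms22}), which yields $\ell^\ast_c>-\infty$ and stretched-exponential subcritical decay, and then obtains $\ell_c<\infty$ not by a direct supercritical block construction but by duality in a planar slab: for $\ell>-\ell^\ast_c$ the decay bound applied to $-f$ makes the complementary crossing events of dyadic rectangles summable, and Borel--Cantelli produces an infinite cluster. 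Your outline correctly identifies the finite-range decomposition as the engine, but the fixed-$(L,M)$ implementation is precisely the step that fails for strongly correlated fields.
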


The finiteness of $\ell_c$ for $f \in \F$ is a consequence of sprinkled bootstrapping arguments pioneered in \cite{rs13,pr15,pt15}. If moreover $f$ is continuous and isotropic, then we further know from \cite{mrv20} that $\ell_c \le 0$, however we will not use this fact (c.f.\ Corollary \ref{c:main2}).

\smallskip Our main result is the following:

\begin{theorem}[Sharpness]
\label{t:main1}
Consider a Gaussian field $f \in \F_\alpha$, $\alpha > 0$. Then:
\begin{enumerate}
\item For every $\gamma \in (0,\alpha) \cap (0,1]$ and $\ell < \ell_c$ there exists a $c_1 > 0$ such that
\[ \theta_R(\ell) \le  e^{-c_1 R^\gamma  } \ , \quad R \ge 1 . \]
\item For every $\ell_0 > \ell_c$ and $\gamma \in (0, \alpha/ (2(d-1))) \cap (0, 1]$ there exists a $c_2 > 0$ such that
\[  \theta(\ell) \ge  c_2 ( \ell - \ell_c)^{1/\gamma}   \ , \quad  \ell \in (\ell_c, \ell_0) . \]
\end{enumerate}
\end{theorem}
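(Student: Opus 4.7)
My plan is to derive both parts of Theorem~\ref{t:main1} as consequences of the differential inequality \eqref{e:mens3}, for a suitable exponent $\gamma$ depending on $\alpha$. Once \eqref{e:mens3} is in hand, part (1) follows from the classical Menshikov integration lemma applied to \eqref{e:mens3}: for $\ell < \ell_c$, $\theta_R(\ell) \to 0$ as $R \to \infty$, so the averages $\tfrac{1}{R}\sum_{r\le R}\theta_r(\ell)$ tend to $0$, and integrating the ODE implies $\theta_R(\ell) \le \exp(-c_1 R^\gamma)$; part (2) follows by integrating \eqref{e:mens3} in $\ell$ on the interval $[\ell_c, \ell]$, bounding the denominator by $\big(\theta(\ell_0)\big)^\gamma \le 1$, sending $R \to \infty$, and rearranging to get $\theta(\ell) \ge c_2(\ell - \ell_c)^{1/\gamma}$.

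The heart of the matter is therefore the derivation of \eqref{e:mens3} for $f \in \F_\alpha$, and I would follow the OSSS-based scheme of \cite{dcrt19a,dcrt20}, adapted to the Gaussian setting via the finite-range decomposition. Concretely: fix $R$ and a truncation scale $K = K(R)$, and write $f = f_{\le K} + f_{>K}$ using the finite-range decomposition from $\F_\alpha$, where $f_{\le K}$ is a sum of independent finite-range Gaussian fields and $f_{>K}$ is a Gaussian remainder with variance decaying like a power of $2^{-K}$ (with exponent dictated by $\alpha$). Couple $f$ to $f_{\le K}$ by sprinkling: shift the level by a small $\delta$ so that $\{f + \ell \ge 0\} \supseteq \{f_{\le K} + \ell - \delta \ge 0\}$ holds outside a rare event whose probability is controlled by the Gaussian tail $\exp(-c\delta^2/\var(f_{>K}))$. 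Applied to the truncated field, which lives on a product probability space indexed by the scales, an algorithm that reveals the finite-range components along annuli starting from a uniformly chosen radius has revealment bounded by $C \cdot \tfrac{1}{R}\sum_{r\le R}\theta_r(\ell)$; the OSSS inequality then bounds $\var(\id_{\{\Lambda_{1/2}\leftrightarrow \partial \Lambda_R\}})$ by this revealment times the sum of influences, and Gaussian integration by parts (or a Cameron--Martin shift) identifies the summed influences with a quantity comparable to $\theta_R'(\ell)$. Optimising $K$ against $R$ under the sprinkling constraint yields \eqref{e:mens3} with $\gamma$ any constant strictly below $\min(\alpha,1)$; the tighter range $\gamma < \alpha/(2(d-1))$ needed for part (2) will come from the need to control the supercritical one-arm probability entering the coupling error at the critical level $\ell_c$, where the only available a priori bound is the trivial polynomial one-arm estimate with exponent depending on $d$.

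The main obstacle I anticipate is the sprinkling step: one must show that truncating to $f_{\le K}$ preserves the $\{\ge 0\}$ level set up to a level shift $\delta$, while quantitatively tracking how the coupling error depends on $\alpha$ and on the one-arm probability. This is where the difference between $\F_\alpha$ in general and the GFF becomes important; in the GFF case \cite{dgrs20} the ficticious-regime device was used precisely to circumvent accumulation of such errors, but here one must absorb them directly into the $\gamma < 1$ exponent without contradicting the lack of exponential decay of correlations. A secondary point is that the OSSS scheme does not literally apply to continuous Gaussian fields on $\R^d$; one should first discretise by evaluating $f_{\le K}$ on a fine mesh (permissible since each $f_k$ is smooth on its own scale) and then pass to the continuous limit, with the estimates uniform in the mesh. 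Assuming these technical pieces are carried through, Proposition~\ref{p:nontriv} guarantees $\ell_c \in \R$, so the conclusions of both parts are non-empty.
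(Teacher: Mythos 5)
Your high-level skeleton — derive the Menshikov-type inequality \eqref{e:mens3}, then integrate it in both $R$ and $\ell$ — is the right one and matches the paper, but the route you propose for \eqref{e:mens3} is not the paper's, and it has a serious gap. You propose to \emph{truncate} the decomposition at a finite scale $K$, couple $f$ to $f_{\le K}$ by sprinkling, apply OSSS to the truncated (genuinely finite-range) field, and then optimise $K$ against $R$. This is precisely the approach the introduction singles out as problematic: any such truncation introduces a coupling error that must be iterated, and it is exactly to sidestep the accumulation of these errors that \cite{dgrs20} needed the ``ficticious regime'' device. You acknowledge the obstacle (``one must absorb them directly into the $\gamma<1$ exponent'') but do not say \emph{how}, and that is the crux — without it the proposal does not go through. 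The paper's mechanism is quite different and avoids sprinkling altogether in the derivation of \eqref{e:mens3}: it applies OSSS to the \emph{untruncated} white-noise decomposition, bundling all coordinates above scale $m$ into a single super-coordinate $W^+_m$ with trivial revealment $\Rev^+_m\le 1$ and an influence $\Inf^+_m$ that is monotone in $m$; the Russo-type inequality (Proposition~\ref{p:russo}) is shown to hold at \emph{every} scale with a gain $(2^m)^{\alpha/2}$, including for $\Inf^+_m$. The key observation is that there is a threshold scale $m_0$ where $\Inf^+_{m_0}\approx \var[\id_A]$: at scales $m\le m_0$ Russo applied directly to $\Inf^+_m$ gives \eqref{e:bound1}'s companion \eqref{e:bound2}, while at $m=m_0+1$ the super-coordinate contribution in OSSS is absorbable (since $\Inf^+_{m_0+1}\le\var[\id_A]$), giving \eqref{e:bound1}. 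Taking the max of the two and optimising over the unknown $m_0$ produces \eqref{e:mens3}, cleanly and with no coupling error at all.

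There are also two concrete errors in the way you read off the exponents. First, you claim that the sprinkling argument yields \eqref{e:mens3} ``with $\gamma$ any constant strictly below $\min(\alpha,1)$''; the actual inequality established in the paper (Proposition~\ref{p:mens}) has $\gamma=\min\{1,\alpha/(2(d-1))\}$ (up to $\eps$), and the factor $\alpha/(2(d-1))$ is forced by the sum $\sum_{m'\le m_0}(2^{m'})^{d-1-\alpha/2}$ coming from the union-bound estimate on revealments (Claim~\ref{c:algo}). It is not produced by a ``supercritical one-arm probability'' — that ingredient belongs to Theorem~\ref{t:main2}/\ref{t:main3}, which are separate planar results, not Theorem~\ref{t:main1}. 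Second, because the inequality has $\gamma=\alpha/(2(d-1))$, integrating it only gives the subcritical decay $\theta_R(\ell)\le e^{-cR^{\alpha/(2(d-1))}}$, which is weaker than part~(1) when $\alpha<2(d-1)$. Your outline is missing the bootstrapping step the paper uses to close this gap: once the equality $\ell_c=\ell^\ast_c$ is established from \eqref{e:mens3} via Lemma~\ref{l:mens}, the sprinkled-renormalisation result of Proposition~\ref{p:boot} (imported from \cite{ms22}) upgrades the decay to $e^{-cR^\gamma}$ for any $\gamma\in(0,\alpha)\cap(0,1]$. Without that second pass the stated conclusion of part~(1) is not obtained.
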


\noindent The first item confirms there is a single regime of subcritical behaviour in the following sense:

\begin{corollary}[Equality of critical parameters]
\label{c:main1}
Let $f \in \F$. Then $\ell_c = \ell^\ast_c$, where
\[     \ell^\ast_c = \inf \big\{ \ell : \liminf_{R \to \infty} \P_\ell[ \Lambda_R \leftrightarrow \partial \Lambda_{2R} ]  > 0 \big\}   . \]
\end{corollary}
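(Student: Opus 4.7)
The plan is to prove the two inequalities $\ell_c^\ast \le \ell_c$ and $\ell_c \le \ell_c^\ast$ separately. Both are short, essentially formal consequences of Theorem~\ref{t:main1}(1) together with monotonicity in $\ell$.

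For $\ell_c^\ast \le \ell_c$, I would use only monotonicity. Since $\Lambda_{1/2} \subset \Lambda_R$, we have $\P_\ell[\Lambda_R \leftrightarrow \partial \Lambda_{2R}] \ge \P_\ell[\Lambda_{1/2} \leftrightarrow \partial \Lambda_{2R}] = \theta_{2R}(\ell)$, and for any $\ell > \ell_c$ the right-hand side converges to $\theta(\ell) > 0$ by definition. Hence $\liminf_{R\to\infty} \P_\ell[\Lambda_R \leftrightarrow \partial \Lambda_{2R}] \ge \theta(\ell) > 0$, so $\ell \ge \ell_c^\ast$; taking the infimum over $\ell > \ell_c$ gives $\ell_c \ge \ell_c^\ast$.

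For the reverse inequality $\ell_c \le \ell_c^\ast$, fix $\ell < \ell_c$ and apply Theorem~\ref{t:main1}(1) to obtain constants $\gamma, c_1 > 0$ with $\theta_R(\ell) \le e^{-c_1 R^\gamma}$. To convert this one-arm bound into a bound on the annular crossing, I would cover $\Lambda_R$ by an $O(R^d)$-sized family of translates $x_i + \Lambda_{1/2}$, using the lattice points of $\Lambda_R$ in the discrete case and a unit-spaced net in the continuous case. Since $x_i \in \Lambda_R$ implies $x_i + \Lambda_R \subset \Lambda_{2R}$, any path from $x_i + \Lambda_{1/2}$ to $\partial \Lambda_{2R}$ must exit $x_i + \Lambda_R$ and hence cross $x_i + \partial \Lambda_R$; stationarity of $f$ then gives $\P_\ell[x_i + \Lambda_{1/2} \leftrightarrow \partial \Lambda_{2R}] \le \theta_R(\ell)$, so a union bound yields
\[ \P_\ell[\Lambda_R \leftrightarrow \partial \Lambda_{2R}] \le C R^d \theta_R(\ell) \le C R^d e^{-c_1 R^\gamma} \to 0 \]
as $R \to \infty$. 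Thus the liminf vanishes, whence $\ell \le \ell_c^\ast$ (using that $\liminf_R \P_\ell[\Lambda_R \leftrightarrow \partial \Lambda_{2R}]$ is monotone non-decreasing in $\ell$, so $\{\ell : \liminf > 0\}$ is upward-closed). Taking the supremum over $\ell < \ell_c$ completes the proof.

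The only substantive input is Theorem~\ref{t:main1}(1), which carries all the weight; the corollary itself is a clean union bound, and the main thing to be careful about is the lattice-versus-continuous distinction in choosing the covering net.
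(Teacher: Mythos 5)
Your proof is correct and follows essentially the same route as the paper: the inequality $\ell_c^\ast \le \ell_c$ is immediate from monotonicity, and the reverse follows from Theorem~\ref{t:main1}(1) via a union bound over a covering net together with stationarity. The only cosmetic difference is that the paper covers only $\partial\Lambda_R$ (which every connecting path must cross), giving the factor $R^{d-1}$ instead of your $R^d$; both are harmless against the stretched-exponential decay.
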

\begin{proof}
It is clear that $\ell^\ast_c \le \ell_c$ by definition. Suppose $\ell < \ell_c$. Then by the union bound, stationarity, and the first item of Theorem \ref{t:main1}, as $R \to \infty$,
\[     \P_\ell[ \Lambda_R \leftrightarrow \partial \Lambda_{2R} ]  \le  c_1 R^{d-1}  \theta_R(\ell) \le c_1 R^{d-1} \times  e^{-c_2 R^\gamma  }   \to 0, \]
and so $\ell \le \ell^\ast_c$.
\end{proof}

For parameters $\ell  < \ell^\ast_c$ one can use renormalisation arguments to deduce a detailed description of the connectivity. In particular, in the companion paper \cite{ms22} it is shown that, for a class of smooth isotropic fields with correlations decaying algebraically with exponent $\alpha > 0$, which partially overlaps with class $\F_\alpha$, for every $\ell < \ell^\ast_c$, as $R \to \infty$, 
\begin{equation}
\label{e:subcrit}
- \log  \theta_R(\ell)  \asymp R^{\min\{\alpha,1\}} (\log R)^{-\id_{\alpha=1} }  .  
\end{equation}
Corollary \ref{c:main1} then extends the conclusion of \eqref{e:subcrit} to the entire subcritical phase $\ell < \ell_c$. Analogous bounds to \eqref{e:subcrit} were also recently shown for the GFF \cite{grs21}.

\smallskip
Let us make some additional remarks:

\begin{enumerate}
\item The first item of Theorem \ref{t:main1} is new for all fields $f \in \F$ except for (i) the GFF where it was established in \cite{dgrs20}, (ii) certain planar fields, where it follows by combining the results of \cite{mrv20} and sprinkled bootstrapping arguments, and (iii) the `short-range correlated' case $\alpha > d$, where it is proven in \cite{s21} (under different assumptions). The second item was only known in the short-range case $\alpha > d$, where it can be established by combining results in \cite{s21,dm21} (again under different assumptions).

\smallskip
\item A consequence of the first item is that $\ell_c = 0$ for continuous planar fields in $\F$ which are also \textit{positively-correlated} and \textit{symmetric}:

\begin{corollary}[$\ell_c = 0$]
\label{c:main2}
Consider a Gaussian field $f \in \F$ which is continuous and planar. Then $\ell_c \le 0$. If $f$ is also positively-correlated and its law is invariant under axis reflection and rotation by $\pi/2$, then $\ell_c = 0$.
\end{corollary}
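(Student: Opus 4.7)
The strategy is to prove the two one-sided inequalities $\ell_c \le 0$ and (under the extra assumptions) $\ell_c \ge 0$ separately.

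For $\ell_c \le 0$, I argue by contradiction using Theorem~\ref{t:main1}(1). Suppose $\ell_c > 0$, so that $\ell = 0$ is strictly subcritical; then $\theta_R(0) \le e^{-c R^\gamma}$ for some $c, \gamma > 0$. Tile $\Lambda_R$ by $O(R^2)$ lattice-centered translates of $\Lambda_{1/2}$. If $\{f \ge 0\}$ admits a horizontal crossing of $\Lambda_R$, some tile $B_i$ is visited by the crossing, and from there the path reaches distance at least $R$; by translation invariance and a union bound,
\[ \P\bigl[\{f \ge 0\} \text{ has a horizontal crossing of } \Lambda_R\bigr] \le C R^2 \, \theta_R(0) \to 0, \]
and the same holds vertically. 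On the other hand, planar continuum duality combined with the Gaussian symmetry $f \equlaw -f$ forces the horizontal and vertical crossing probabilities of $\{f \ge 0\}$ at level $0$ to sum to at least $1$: the complement of a horizontal crossing by $\{f \ge 0\}$ is (modulo the nodal line) a vertical crossing by $\{f < 0\}$, whose probability by $f \equlaw -f$ equals that of a vertical crossing by $\{f > 0\} \subseteq \{f \ge 0\}$. Having both crossing probabilities tend to $0$ contradicts this, so $\ell_c \le 0$.

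For $\ell_c \ge 0$ under the extra hypotheses, I apply Zhang's classical argument. Assume $\{f \ge 0\}$ percolates at $\ell = 0$; ergodicity (from the finite-range decomposition available in $\F$) together with a Burton--Keane-type argument gives a.s.\ a unique unbounded cluster. Let $A_i$, $i \in \{N, S, E, W\}$, be the increasing event that the infinite cluster of $\{f \ge 0\}$ reaches side $i$ of $\partial \Lambda_R$; by the reflection/rotation symmetry and the fact that the cluster is unbounded, $\P[A_i] \to 1$ as $R \to \infty$. FKG (Pitt's inequality, applicable because $f$ is positively correlated) yields $\P[\bigcap_i A_i] \ge \prod_i \P[A_i] \ge 1 - 4\epsilon$ for $R$ large. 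By $f \equlaw -f$, the analogous decreasing events $B_i$ for $\{f \le 0\}$ satisfy the same bound, and a union bound gives $\P[(\bigcap_i A_i) \cap (\bigcap_i B_i)] \ge 1 - 8\epsilon > 0$. But the simultaneous existence of unbounded components of the disjoint open sets $\{f > 0\}$ and $\{f < 0\}$, each touching all four sides of $\partial \Lambda_R$, is ruled out by a Jordan-curve argument in the plane. The contradiction yields $\ell_c \ge 0$.

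The main technical subtlety is handling the nodal line $\{f = 0\}$, needed both for the continuum planar duality in the first part and for the disjointness of the two infinite clusters in the second. For smooth Gaussian fields this is routine via Bulinskaya-type non-degeneracy, and for the more general continuous fields in $\F$ it should follow from the regularity assumptions built into the class, possibly using the box-crossing property for the nodal set advertised in the abstract. The other minor obstacle is verifying Burton--Keane uniqueness of the infinite cluster in the present continuum, positively-correlated Gaussian setting, but this is by now a standard adaptation.
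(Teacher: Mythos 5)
Your proof is correct and follows essentially the same route as the paper: the bound $\ell_c \le 0$ is obtained by exactly the same contradiction (stretched-exponential decay from Theorem~\ref{t:main1}(1) plus a union bound versus the self-duality identity $\P[A_R]+\P[B_R]=1$ of Corollary~\ref{c:cont}(5)), and for $\ell_c \ge 0$ the paper simply cites \cite{gkr88,alex96}, whose content is precisely the Zhang-type argument you sketch. The only cosmetic differences are your $O(R^2)$ tiling where $O(R)$ suffices, and that the square-root trick behind $\P[A_i]\to 1$ deserves explicit mention of FKG.
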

\begin{proof}
For $f$ continuous and planar, the parameter $\ell = 0$ is self-dual, meaning that $\{f \ge 0\}$ and $\{f \le 0\}$ have the same law; a consequence is that, for every $R > 0$,  $\P[A_R] + \P[B_R] = 1$ where $A_R$ (resp.\ $B_R$) is the event that  $[0,R]^2 \cap \{f \ge 0\}$ contains a path that crosses $[0, R]^2$ from left-to-right (resp.\ top-to-bottom) (see item (5) of Corollary~\ref{c:cont}). On the other hand, if $\ell_c > 0$ then by the union bound, stationarity, and the first item of Theorem \ref{t:main1}, as $R \to \infty$, 
\[ \max\{ \P[A_R], \P[B_R] \} \le c_1  R  \, \theta_R(\ell)  \le c_1  R \times  e^{-c_2 R^\gamma  }   \to 0 , \]
 a contradiction. Hence $\ell_c \le 0$.

The complementary bound $\ell_c \ge 0$ does not require that $f \in \F$, and is a general consequence of self-duality, positive association (equivalent to positive correlations for Gaussian fields, see Lemma \ref{l:pa}), stationarity, and invariance under axis reflection and rotation by $\pi/2$, see \cite{gkr88, alex96}.
 \end{proof}

It was previously established in \cite{mrv20} that $\ell_c = 0$ for a wide class of continuous planar Gaussian fields, although with a very different proof and applying to a different class of fields. Notably \cite{mrv20} covers many important fields not in $\F$, for instance the planar monochromatic wave (see Section \ref{s:open}). On the other hand, the proof of $\ell_c \le 0$ in \cite{mrv20} required isotropy, whereas we prove this without assuming any symmetry.

\smallskip
\item As discussed above, the second item of Theorem \ref{t:main1} gives near-critical information on the percolation density. Recall that the critical exponent $\beta$ is defined as 
\[   \theta(\ell)  = (\ell - \ell_c)^{\beta + o(1)} \quad \text{as } \ell \downarrow \ell_c   \]
whenever it exists. Although the existence of $\beta$ is wide open for the models we consider (even the GFF), assuming its existence our result establishes the bound
\[   \beta \le \max\{1, 2(d-1)/\alpha\} .\]

It is an ongoing challenge to prove the existence of the exponent $\beta$ and identify its value in percolation models. Even for Bernoulli percolation on $\Z^d$ it is known only that $\beta$ exists and takes its `mean-field' value $\beta = 1$ in high dimensions $d \ge 11$ \cite{hs90,fh17}, and that the bound $\beta \le 1$, assuming its existence, holds in general \cite{cc86} (for $d=2$ it is strongly believed that $\beta = 5/36$ since this exponent has been computed for site percolation on the triangular lattice \cite{sw01}).

For strongly-correlated models, physicists believe that $\beta = \beta_d(\alpha)$ is universal for all models with algebraic decay of correlations with exponent $\alpha > 0$ \cite{ik91,jgrs20}, but there is ongoing controversy as to its value: while there are theoretical predictions that $\beta_d(\alpha)$ does not depend on $\alpha$, and so has the same value as for Bernoulli percolation on $\Z^d$ \cite{ik91}, numerical work has suggested that $\alpha \mapsto \beta_d(\alpha)$ is strictly decreasing in a certain interval (see \cite{jgrs20} and references therein). Other numerical studies have suggested that $\beta_d(\alpha)$ might even exceed $1$ in the case of the $d=3$ GFF~\cite{ml06} (although we view this as unlikely). Recent work \cite{dpr23} on the closely related but integrable model of the \textit{metric graph Gaussian free field} strongly suggests that $\beta_d(d-2) = 1$, $d \ge 3$, which notably would refute the predictions in \cite{ik91} and \cite{ml06}.

Our result establishes rigorously that, assuming its existence, $\beta_d(\alpha) \le 1$ if $\alpha \ge 2(d-1)$, and also $\beta_d(\alpha) < \infty$ for all $f \in \F$.

\smallskip
\item We emphasise that Theorem \ref{t:main1} does \textit{not} require the field to be positively-correlated, and positive associations (i.e.\ the FKG inequality) play no role in the proof. Nevertheless, many natural examples of fields $f \in \F$ \textit{are} positively-correlated, and in that case $\Lambda_{1/2}$ can be replaced by $0$ in the definitions of $\theta_R(\ell)$ and $\theta(\ell)$ in \eqref{e:thetar}. We give examples of non-positively-correlated fields to which Theorem \ref{t:main1} applies in Section~\ref{s:ex}.

\smallskip
\item While we do not consider fields with slower-than-polynomial decay of correlations (i.e.\ the case $\alpha = 0$), we believe that the proof of Theorem \ref{t:main1} can be adapted to establish sharpness (in the sense of Corollary \ref{c:main1}) for certain fields with poly-logarithmic decay of correlations with sufficiently large exponent. In that case, the subcritical decay of connectivity is slower than stretched-exponential, see \cite{ms22} and also Remark \ref{r:alpha0}.
\end{enumerate}

\subsection{Continuous planar fields}
We next present stronger results in the case of continuous planar fields which are positively-correlated. For this we introduce a second class of fields $\calS = \cup_{\alpha > 0} \calS_\alpha$ (for `scale-mixture'), which roughly speaking are fields whose correlations are positive and regularly varying at infinity with index~$\alpha$, and which also have a certain `scale-mixture' decomposition (see Definition~\ref{d:s} for a precise description). Such fields may also be in $\F$, but do not have to be.

\begin{theorem}
\label{t:main2}
Consider a continuous planar Gaussian field $f \in \F_\alpha \cap \calS$, $\alpha > 0$. Then for every $\ell_0 > \ell_c$ and $\gamma \in (0, \alpha) \cap (0, 1]$ there exists a $c > 0$ such that, for every $\ell \in (\ell_c, \ell_0)$, 
\begin{equation}
\label{e:main2}
 \theta(\ell) \ge  c ( \ell - \ell_c)^{1/\gamma} .  
  \end{equation}
\end{theorem}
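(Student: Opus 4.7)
The plan is to prove Theorem~\ref{t:main2} by strengthening the Menshikov-type differential inequality \eqref{e:mens3} used in the proof of Theorem~\ref{t:main1}(2). Once \eqref{e:mens3} is established for all $\gamma \in (0,\alpha)\cap(0,1]$, rather than only for $\gamma \in (0,\alpha/(2(d-1)))\cap(0,1]$ (which in the planar case reads $\gamma < \alpha/2$), integrating from $\ell_c$ to $\ell$ exactly as in the derivation of Theorem~\ref{t:main1}(2) yields the claimed lower bound $\theta(\ell) \ge c(\ell - \ell_c)^{1/\gamma}$.

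The strengthening would rely on the three ingredients advertised in the abstract: the scale-mixture decomposition provided by $f \in \calS$, a new weak mixing property for strongly correlated Gaussian fields, and the box-crossing property for the nodal set. Concretely, I would follow the OSSS/Menshikov framework from Theorem~\ref{t:main1}(2), but at the step where one controls the contribution of the long-range part of the finite-range decomposition to $\partial_\ell \theta_R(\ell)$ via Gaussian integration by parts, I would replace the two-layer sprinkling interpolation by a single one. The loss of a factor $R^{d-1}$ per sprinkling step, coming from a union bound over pivotal sites on the boundary of $\Lambda_R$, is what produces the $1/(2(d-1))$ in Theorem~\ref{t:main1}(2); saving one such layer therefore roughly doubles the admissible $\gamma$, and in $d=2$ reaches the target $\gamma < \alpha$.

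The role of the planar hypotheses is to provide the ``one-step'' decoupling. Positive correlations give the FKG inequality, so together with self-duality at level $0$ (see Corollary~\ref{c:main2}, which places $\ell_c \le 0$) the nodal box-crossing property translates into uniformly positive annulus-crossing probabilities for $\{f \ge 0\}$ at all $\ell \le \ell_c$, playing the role that RSW estimates play in classical two-dimensional percolation. The weak mixing property, applied against this family of crossings, gives a quantitative covariance bound between a crossing event inside $\Lambda_r$ and an arm event outside $\Lambda_{2r}$ that matches the polynomial decay rate $\alpha$. The scale-mixture structure then transfers this bound, scale by scale in the decomposition of $f$, into the improved lower bound on $\partial_\ell \theta_R(\ell)$ needed for \eqref{e:mens3} with $\gamma < \alpha$.

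The main obstacle, and where I expect the real technical work to lie, is to obtain weak mixing and nodal box-crossing in a form strong enough and uniform enough to save the full factor of two in the exponent throughout the subcritical and near-critical regime, not just at criticality. Specifically, the weak mixing bound must come with an error of the exact shape ``polynomial decay $R^{-\alpha}$ times a crossing probability,'' so that when combined with the nodal-crossing lower bound one can directly replace the $R^{2(d-1)}$ boundary factor by $R^{d-1}$; and the nodal box-crossing property must hold in the merely continuous (not necessarily smooth) setting, which requires a planar duality argument robust enough to handle arbitrary $f \in \F_\alpha \cap \calS$. Once these two inputs are in hand, the derivation of the improved \eqref{e:mens3} and its integration to \eqref{e:main2} proceed in close parallel to the proof of Theorem~\ref{t:main1}(2).
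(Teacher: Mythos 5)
Your high-level target is right --- the theorem does follow by upgrading the Menshikov-type inequality \eqref{e:mens3} so that it holds with any $\gamma<\alpha$ in the planar case, and then integrating as in Theorem~\ref{t:main1}(2) --- and you correctly list the ingredients (scale-mixture structure, weak mixing, nodal box-crossings, FKG, self-duality). But the mechanism you describe for the improvement is not the one in the paper and, as stated, does not correspond to any step of the argument. There is no ``two-layer sprinkling interpolation'' in the proof of Theorem~\ref{t:main1}(2): the exponent $2(d-1)$ arises as the product of the Russo-type exponent $\alpha/2$ (from Proposition~\ref{p:russo}) with the revealment exponent $d-1$ coming from the single union bound $\P_\ell[\Lambda_s\leftrightarrow\partial\Lambda_r]\le c\,s^{d-1}\,\P_\ell[\Lambda_{1/2}\leftrightarrow\partial\Lambda_r]$ in Claim~\ref{c:algo}, followed by the min-max optimisation over the cut-off scale $t$. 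Your ``save one sprinkling layer, halve the exponent'' arithmetic happens to land on $\gamma<\alpha$ when $d=2$, but it is not a proof step, and it gives no indication of what quantitative input would actually replace the union bound.

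The missing idea is the one-arm estimate. The paper's proof replaces the union bound by
\[
\P_\ell[\Lambda_s\leftrightarrow\partial\Lambda_r]\;\le\; c\,s^{1/2}\,\P_\ell[\Lambda_{1/2}\leftrightarrow\partial\Lambda_r],\qquad \ell\ge\ell_c,\ 1\le s\le r/2,
\]
obtained by gluing a $\Lambda_{1/2}\leftrightarrow\partial\Lambda_{2s}$ arm to the $\Lambda_s\leftrightarrow\partial\Lambda_r$ crossing through an occupied circuit in $\Lambda_{2s}\setminus\Lambda_s$ (uniform annulus crossings from Theorem~\ref{t:rsw} plus FKG), and then invoking the one-arm lower bound $\P[\arm(2s)]\ge c\,s^{-1/2}$ of Corollary~\ref{c:main3}. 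That corollary is where weak mixing and the nodal box-crossing property enter: $\P[\nodarm(R)]\ge cR^{-1}$ follows from Theorem~\ref{t:main3}, and $\P[\arm(R)]^2\ge\P[\nodarm(R)]$ by FKG and self-duality. This turns the revealment exponent $d-1=1$ into the one-arm exponent $\eta_1=1/2$, and running the same OSSS/Russo optimisation yields $\gamma=\alpha/(2\eta_1)=\alpha$ (see Remark~\ref{r:onearm}). Your proposal also misstates what weak mixing delivers: Proposition~\ref{p:wm} is a uniform \emph{lower} bound on $\P[A_1\cap A_2^c]$ for increasing events on disjoint domains (used to intersect a $\{f\ge0\}$-crossing with a $\{f\le0\}$-crossing and produce a nodal crossing), not a covariance bound decaying at rate $R^{-\alpha}$. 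Without the one-arm input your outline cannot be completed.
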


\begin{remark}
In fact, in the setting of Theorem \ref{t:main2} we know that $\ell_c = 0$; see Corollary~\ref{c:main2}.
\end{remark}

Note that Theorem \ref{t:main2} establishes the mean-field bound \eqref{e:mfb} as soon as $\alpha > 1$ (as opposed to $\alpha > 2(d-1) = 2$ for Theorem \ref{t:main1}). This is particularly interesting as it includes models lying outside the Bernoulli percolation universality class, which physicists predict corresponds to the parameter regime $\alpha > 3/2$ in the planar case \cite{w84}.

\smallskip
The proof of Theorem \ref{t:main2} relies on a new `weak mixing' property for fields $f \in \calS$ which we state and prove in Section \ref{s:wm} (see Proposition \ref{p:wm}). We next present further applications of this property, which we believe to be of independent interest.

\smallskip
For $a,b, > 0$, let $\nodcross(a,b)$ denote the event that $\{f = 0\} \cap ([0,a] \times [0,b])$ contains a path that crosses $[0,a] \times [0,b]$ from left-to-right. For $R \ge 1$, abbreviate $\arm(R) = \{ \Lambda_{1/2} \leftrightarrow \partial \Lambda_R\}$, and similarly let $\nodarm(R)$ denote the event that $\{ f = 0\}$ contains a path that intersects $\Lambda_{1/2}$ and $\partial \Lambda_R$.

\begin{theorem}[Uniform box-crossing estimates]
\label{t:main3}
Consider a continuous planar Gaussian field $f \in  \calS$. Then for every $\rho > 0$ there is a $ c > 0$ such that
\[ c  \le  \liminf_{R \to \infty} \P[ \nodcross(R, \rho R) ]   \le  \limsup_{R \to \infty} \P[ \nodcross(R, \rho R) ]   \le 1-c. \]
Moreover, if $f \in \calS_\alpha$, $\alpha > 0$, then one may take $c = c_1   e^{-c_2 / \alpha^2}$ for $c_1,c_2 > 0$ depending only on~$\rho$.
\end{theorem}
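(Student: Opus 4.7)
My plan is to proceed in two steps: (i) establish Russo--Seymour--Welsh (RSW) box-crossing estimates for $\{f \ge 0\}$ in rectangles of arbitrary aspect ratio, and (ii) transfer these to the nodal set via a planar duality argument.

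For step (i), I would adapt Tassion's RSW framework to the Gaussian setting. The base case is that for a square, self-duality at $\ell = 0$ (item (5) of Corollary \ref{c:cont}) together with $f \equlaw -f$ yields $\P[\text{LR in }\{f \ge 0\}] = 1/2$. Tassion's inductive doubling scheme then amplifies this to lower bounds on the LR crossing probability of rectangles of any fixed aspect ratio, provided one has FKG (Lemma \ref{l:pa}, from the positive correlations of $f \in \calS$) together with a quantitative substitute for the independence of field restrictions to disjoint regions --- precisely what the weak mixing property Proposition \ref{p:wm} is designed to deliver.

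For step (ii), I would exploit the planar duality identity that, for continuous fields in general position, $\nodcross(a,b) = \{\text{LR in }\{f \ge 0\}\} \cap \{\text{LR in }\{f \le 0\}\}$ (since the complement of an LR crossing of $\{f \ge 0\}$ is a TB crossing of $\{f < 0\}$, and analogously for the opposite sign). To lower bound $\P[\nodcross(R, \rho R)]$, I partition the rectangle into three equal horizontal strips --- a top strip $T$, a buffer strip, and a bottom strip $B$ --- and consider the joint event that $T$ admits an LR crossing of $\{f \ge 0\}$ while $B$ admits an LR crossing of $\{f \le 0\}$. A Jordan-curve argument in the slab bounded by the two monochromatic crossings shows that this joint event forces a nodal LR crossing of the full rectangle: the intermediate value theorem produces nodal points on the left and right boundaries of the slab, and the fact that no monochromatic path can connect the positive upper boundary to the negative lower boundary forces a nodal arc joining these points. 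Since $T$ and $B$ are separated by the buffer strip, weak mixing yields
\[
\P[\nodcross(R, \rho R)] \ge \P[\text{LR in }\{f \ge 0\} \text{ in } T] \cdot \P[\text{LR in }\{f \le 0\} \text{ in } B] - \epsilon,
\]
with both factors bounded below by step (i) applied to $f$ and $-f$ respectively. The complementary upper bound $\P[\nodcross(R, \rho R)] \le 1-c$ is equivalent, via the same duality, to a lower bound on the probability of a monochromatic TB crossing of $\{f > 0\}$ or $\{f < 0\}$, which follows immediately from step (i) applied to the rotated rectangle.

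The main obstacle is executing Tassion's RSW in step (i) without genuine independence of disjoint regions, which requires carefully tracking the decorrelation errors from iterated applications of weak mixing. For $f \in \calS_\alpha$ the weak mixing error decays at a rate governed by $\alpha$; balancing the multiplicative gain per doubling against the additive mixing error over the roughly $1/\alpha$ iterations one can afford before the error overwhelms the gain produces the stated quantitative exponent $c_1 e^{-c_2/\alpha^2}$.
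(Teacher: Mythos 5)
Your step (ii) is exactly the paper's argument: a crossing of $\{f \ge 0\}$ in one horizontal strip and a crossing of $\{f \le 0\}$ in a disjoint strip force, by the intermediate value theorem and a Jordan-curve argument, a nodal left-right crossing of the whole rectangle; the joint probability of these two events (one increasing, one decreasing, on disjoint domains) is then bounded below by the weak mixing property, and the upper bound reduces to the lower bound by duality/symmetry. One small correction there: Proposition \ref{p:wm} does \emph{not} give the additive quasi-independence $\P[A_1\cap A_2^c]\ge \P[A_1]\P[A_2^c]-\epsilon$ that you write. For $\alpha\le d$ such a statement is expected to be \emph{false} (see the remark following Proposition \ref{p:wm}); what the proposition gives is the weaker multiplicative bound $\P[A_1\cap A_2^c]\ge c_1 e^{-c_2/\alpha^2}$ whenever both marginals exceed $\eps$. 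That is still exactly what is needed here, and it is the unique source of the $e^{-c_2/\alpha^2}$ in the final constant.

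The genuine gap is in step (i). The paper does not run Tassion's doubling scheme with weak mixing as a substitute for independence; it invokes the K\"ohler-Schindler--Tassion theorem (Theorem \ref{t:rsw}, from \cite{kt20}), which yields the uniform box-crossing property for the excursion sets of \emph{any} stationary, positively-associated, self-dual planar model with the required symmetries --- with a constant independent of $\alpha$ and with no mixing hypothesis at all. Your proposed route would need to make Tassion's renormalisation close using only Proposition \ref{p:wm}, but that proposition supplies nothing beyond FKG for pairs of increasing events, and for an increasing/decreasing pair it supplies only the lossy multiplicative constant $e^{-c_2/\alpha^2}$ rather than an additive error; so the ``balance multiplicative gain against additive mixing error over $\sim 1/\alpha$ iterations'' bookkeeping rests on an estimate that is simply not available in the strongly correlated regime $\alpha\le d$, and it is not shown (and not clear) that the scheme closes. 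Moreover, even if it did, each doubling step would cost a factor $e^{-c/\alpha^2}$, degrading the excursion-set constant for no reason. Replacing your step (i) by the citation to Theorem \ref{t:rsw} removes the gap and recovers the paper's proof, with the $\alpha$-dependence entering only through the single application of weak mixing in step (ii).
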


\begin{corollary}
\label{c:main3}
Consider a continuous planar Gaussian field $f \in \calS$. Then there is a $c > 0$ such that, for every $R \ge 1$,
\[  \P[\arm(R)]  \ge c R^{-1/2} \quad \text{and} \quad  \P[ \nodarm(R)] \ge cR^{-1}    . \]
\end{corollary}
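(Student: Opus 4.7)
The approach splits into two steps: first I would prove the nodal arm bound using Theorem \ref{t:main3} directly, and then deduce the level-set arm bound via a `square-root trick' that exploits positive association together with the symmetry $f \equlaw -f$.

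Step 1 (nodal arm). Applying Theorem \ref{t:main3} with $\rho = 1$ gives $\P[\nodcross(R,R)] \ge c_0$ for all $R$ large enough (the bound $c/R$ is trivial for bounded $R$ by absorbing into the constant). Any left--right nodal crossing of $[0,R]^2$ must meet the left edge $\{0\} \times [0,R]$. I would partition this edge into $\lceil R \rceil$ unit segments and, writing $E_y$ for the event that $\{0\}\times[y,y+1]$ is connected in $\{f=0\}$ to $\{R\} \times [0,R]$ inside $[0,R]^2$, apply a union bound
\[
c_0 \le \P[\nodcross(R,R)] \le \sum_{y=0}^{\lceil R\rceil-1} \P[E_y].
\]
Since $E_y$ implies a nodal path in $\{f=0\}$ from the unit box $\Lambda_{1/2}(0,y+\tfrac12)$ to $\partial \Lambda_R(0,y+\tfrac12)$, stationarity of $f$ gives $\P[E_y] \le \P[\nodarm(R)]$, so $\P[\nodarm(R)] \ge c_0/\lceil R\rceil \ge c/R$.

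Step 2 (level-set arm via square-root trick). Let $\arm^+(R) := \arm(R)$ and let $\arm^-(R)$ be the analogous event for $\{f \le 0\}$ (i.e.\ the arm in the negative excursion set). Because $\{f=0\} \subset \{f \ge 0\} \cap \{f \le 0\}$,
\[
\nodarm(R) \subset \arm^+(R) \cap \arm^-(R).
\]
The event $\arm^+(R)$ is increasing in $f$ while $\arm^-(R)$ is decreasing. Since $f \in \calS$ has positive correlations, Lemma \ref{l:pa} provides the FKG inequality, which in its standard form for an increasing event $A$ and decreasing event $B$ reads $\P[A \cap B] \le \P[A]\,\P[B]$. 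Combined with $f \equlaw -f$, which gives $\P[\arm^-(R)] = \P[\arm^+(R)] = \P[\arm(R)]$, this yields
\[
\P[\nodarm(R)] \le \P[\arm^+(R)] \cdot \P[\arm^-(R)] = \P[\arm(R)]^2,
\]
and therefore $\P[\arm(R)] \ge \sqrt{\P[\nodarm(R)]} \ge c^{1/2} R^{-1/2}$ as claimed.

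The main conceptual point, rather than a technical obstacle, is recognising that Theorem \ref{t:main3} gives a strong enough planar input that the one-arm bound follows by a soft correlation-inequality argument; no direct RSW/quasi-multiplicativity work is needed for the arm event itself. The only detail to verify carefully is that a nodal crossing of $[0,R]^2$ realises $E_y$ for at least one $y$, which follows from the nodal path being a continuum path that must hit the left edge at some point and from $f$ being continuous.
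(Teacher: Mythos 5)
Your proposal is correct and follows essentially the same route as the paper: the nodal arm bound via Theorem \ref{t:main3} and a union bound over unit segments of the left edge, and the level-set arm bound via the inclusion $\nodarm(R) \subset \arm^+(R) \cap \arm^-(R)$, the FKG inequality for an increasing and a decreasing event (Lemma \ref{l:pa}), and the symmetry $f \equlaw -f$. No gaps.
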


These results were previously only known in the `short-range' case $\alpha > 2$ \cite{bg17, rv19, mv20}. The proof of Theorem \ref{t:main3} relies on recently established uniform box-crossing estimates for general positively-correlated percolation models \cite{kt20}, which apply to the \textit{excursion} sets $\{f \ge 0\}$ and $\{f \le 0\}$. The weak mixing property then allows us to pass these estimates to the \textit{nodal} set $\{f = 0\}$.

\begin{remark}
The box-crossing estimates are a strong indication that the nodal set $\{f = 0 \}$ has a scaling limit, although to establish this rigorously (even along a subsequence) requires additional control on arm exponents \cite{ab99}. In fact, considerations of universality suggest that $\P[ \nodcross(R, \rho R) ]$ converges as $R \to \infty$ to a limit $c_\infty(\rho;\alpha) > 0$ that depends only on $\rho$ and $\alpha$ (and is independent of $\alpha$ for $\alpha > 3/2$). We are unaware of any conjectures in the literature as to the value of  $c_\infty(\rho;\alpha)$, $\alpha < 3/2$, or whether it degenerates as $\alpha \to 0$.
 
\smallskip
Using a similar proof, one can establish other uniform-in-scale properties of the nodal set, for instance uniform bounds on the probability that $\{f=0\}$ contains a circuit in the annulus $\Lambda_{\rho R} \setminus \Lambda_R$, $\rho>1$. For simplicity we restrict our attention to box-crossings.
\end{remark}

\subsection{Examples}
\label{s:ex}
In this section we present examples of fields to which our results apply.

\subsubsection{The Gaussian free field (GFF)}
The GFF is a fundamental example of a strongly correlated system, and has been central to the study of the percolative properties of such systems \cite{blm87,rs13,dpr18}. To define the GFF, let $(X_i)_{i \ge 0}$ be the simple random walk on $\Z^d$, $d \ge 3$, initialised at the origin. The corresponding Green's function
\begin{equation}
\label{e:green}
 G(x) =  \sum_{i \ge 0} \P[X_i = x]   
 \end{equation}
 has asymptotic decay $G(x) \sim c_d |x|^{-(d-2)}$ as $|x| \to \infty$ for a constant $c_d > 0$. The GFF may be defined as the Gaussian field $f$ on $\Z^d$, $d \ge 3$, with covariance 
\[ \E[ f(x) f(y) ] = G(x-y)  \quad \text{for } x,y \in \Z^d. \]
Alternatively, for a finite subset $U \subset \Z^d$ one can consider the field $f = f_U$ with Dirichlet boundary conditions on $U$ (i.e.\ $f_U(x) = 0$ for $x \notin U$) whose density with respect to the Lebesgue measure on $\R^U$ is proportional to
\begin{equation}
\label{e:gff}
 \exp \Big(  - \frac{1}{4d}  \sum_{x \sim y} \big(f(x) - f(y) \big)^2  \Big) .
 \end{equation}
Then the GFF is the weak limit of $f_U$ as $U$ exhausts $\Z^d$.

\smallskip
It is well-known that the GFF admits a finite-range decomposition \cite{bgm04,bau13}. Very recently it has been shown that the GFF is in class $\F_{d-2}$ \cite{sch22}, so Theorem~\ref{t:main1} applies.

\begin{theorem}
\label{t:gff}
The conclusion of Theorem \ref{t:main1} with $\alpha = d-2$ holds for the GFF.
\end{theorem}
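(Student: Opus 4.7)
The strategy for Theorem \ref{t:gff} is purely reductive: Theorem \ref{t:main1} applies to any field in $\F_\alpha$, so it suffices to verify that the GFF belongs to $\F_{d-2}$. The plan therefore has two ingredients, recalling a finite-range decomposition of the GFF and checking that it meets the quantitative requirements of Definition \ref{d:deff} with exponent $\alpha = d-2$.

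First, I would invoke one of the classical finite-range decompositions of the Green's function developed in \cite{bgm04,bau13}. These produce an explicit splitting $G = \sum_{k \ge 0} G_k$ in which each $G_k$ is a positive-definite kernel supported on pairs $(x,y)$ with $|x-y| \lesssim L^k$, equipped with scale-by-scale estimates on the size (and, in the smooth versions, the derivatives) of $G_k$. Realising each $G_k$ as the covariance of an independent Gaussian field $f_k$ gives the candidate decomposition $f \equlaw \sum_{k \ge 0} f_k$ into independent finite-range components. The correct exponent is forced by the sharp asymptotic $G(x) \sim c_d |x|^{-(d-2)}$, which supplies the polynomial correlation decay with $\alpha = d-2$ demanded by the class.

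Second, I would match the output of the decomposition against the precise list of scale-by-scale bounds on $f_k$ required by Definition \ref{d:deff}, derived from the corresponding bounds on $G_k$. This verification, which is the technical core of the argument, is exactly what \cite{sch22} accomplishes. Once GFF $\in \F_{d-2}$ is established, Theorem \ref{t:main1} applied with $\alpha = d-2$ yields both items of Theorem \ref{t:gff} without further work. The main obstacle is not conceptual but bookkeeping: the classical decompositions of \cite{bgm04,bau13} are phrased in forms (e.g.\ via continuum kernels or $L^2$-type scale norms) that do not exactly match the pointwise/regularity formulation of $\F_{d-2}$, so some care is needed to track constants across scales and translate between norms. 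No new probabilistic input beyond the existing GFF machinery should be required.
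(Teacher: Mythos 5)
Your proposal coincides with the paper's proof: Theorem \ref{t:gff} is obtained exactly by citing \cite{sch22} for the fact that the GFF lies in $\F_{d-2}$ and then applying Theorem \ref{t:main1}, with \cite{bgm04,bau13} mentioned only as background for the existence of finite-range decompositions. The one point worth sharpening is that Definition \ref{d:deff} demands more than independent finite-range summands with covariances $G_k$ supported at scale $L^k$ — it requires each layer to admit a \emph{compactly supported convolution square root} $q(\cdot,t)$ (the white-noise, factor-of-i.i.d.\ structure driving the OSSS argument), which is precisely the non-trivial translation that \cite{sch22} supplies and that the paper's own self-contained fallback (Lemma \ref{l:gffex}, using the midpoint-lattice trick to get a self-convolution of the heat kernel, but yielding only $\F_{(d-2)/2}$) is designed to circumvent.
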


As previously mentioned, the first item of Theorem \ref{t:main1} was already known for the GFF \cite{pr15,grs21,dgrs20}, but the near-critical information in the second item is new. This is of particular interest given the disagreement in the physics literature as to the value of the exponent $\beta$. Our result shows that $\beta \le 2(d-1)/(d-2)$ (assuming the exponent exists), and hence that $\beta < 2+\eps$ in sufficiently high dimension.

Note that to establish the sharpness of the phase transition for the GFF (in the sense of Corollary \ref{c:main1}) we require only that the GFF is in class $\cup_{\alpha > 0}\F_{\alpha}$. For completeness we present in Section \ref{s:exfrd} a decomposition due to \cite{dgrs20} that shows the GFF is in class $\F_{(d-2)/2}$.

\subsubsection{The Gaussian membrane model (GMM)}
The GMM is a variant of the GFF in which the discrete gradient in \eqref{e:gff} is replaced by the discrete Laplacian; the effect is to penalise large \textit{curvature} rather than large \textit{gradient} (see \cite{cn21} and references therein for background and motivation). The GMM can also be viewed as the ground state of the `random-field GFF' in which the random field consists of independent Gaussians \cite{dhp21}.

\smallskip
To define the GMM precisely, recall the Green's function $G$ in \eqref{e:green}. The GMM may be defined as the Gaussian field $f$ on $\Z^d$, $d \ge 5$, with covariance
 \[ \E[ f(x) f(y) ] = (G \star G)(x-y)  \quad \text{for } x,y \in \Z^d, \]
where $\star$ denotes convolution; this covariance has asymptotic decay $(G \star G)(x) \sim c_d |x|^{-{d-4}}$ as $|x| \to \infty$ for some $c_d > 0$ \cite[Lemma 5.1]{sak03}. Alternatively, by analogy with \eqref{e:gff}, for a finite subset $U \subset \Z^d$ one can consider the field $f = f_U$ with Dirichlet boundary conditions on $U$ whose density with respect to the Lebesgue measure on $\R^U$ is proportional to
\begin{equation}
\label{e:gmm}
 \exp \Big(  - \frac{1}{2}  \sum_x ( \Delta f(x) )^2  \Big) , 
 \end{equation}
 where $\Delta f(x) =  \frac{1}{2d}  \sum_{y \sim x}  \big( f(y) - f(x) \big)$ is the discrete Laplacian. Then the GMM is the weak limit of $f_U$ as $U$ exhausts $\Z^d$.

\smallskip
Level-set percolation for the GMM was studied in \cite{cn21}, where non-triviality $\ell_c \in (-\infty,\infty)$ was established. However the sharpness of the phase transition was not previously known. Recently it has been shown that the GMM is in $\F_{d-4}$ \cite{sch22}, and so sharpness follows from our main result:

\begin{theorem}
\label{t:gmm}
The conclusion of Theorem \ref{t:main1} with $\alpha = d-4$ holds for the GMM. 
\end{theorem}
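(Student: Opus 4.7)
The plan is to reduce the theorem directly to the abstract sharpness result Theorem~\ref{t:main1}, by establishing that the GMM lies in the class $\F_{d-4}$. Once this membership is shown, both items of Theorem~\ref{t:main1} apply with $\alpha = d-4$ and yield exactly the two claimed conclusions for the GMM.

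Stationarity and Gaussianity are immediate from the construction of the GMM as the weak limit of the Dirichlet fields with density \eqref{e:gmm}, and the polynomial decay of correlations with exponent $\alpha = d-4$ is the asymptotic $(G\star G)(x) \sim c_d |x|^{-(d-4)}$ already recalled above (from \cite[Lemma 5.1]{sak03}). What remains, and where essentially all the work of verifying membership in $\F_{d-4}$ lies, is to exhibit a finite-range decomposition of the covariance $G \star G$ whose per-scale kernels satisfy the quantitative smoothness and decay bounds packaged in Definition~\ref{d:deff}.

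My approach is to convolve a finite-range decomposition of the Green's function with itself. Starting from a decomposition $G = \sum_{k \ge 0} G_k$ of the type constructed in \cite{bgm04,bau13} (the same one used for the GFF in Theorem~\ref{t:gff}), in which each $G_k$ is positive semi-definite with kernel supported in a ball of radius of order $2^k$ and obeying the pointwise and derivative bounds standard for GFF-type decompositions, I would write
\[
G \star G = \sum_{k \ge 0} H_k, \qquad H_k := G_k \star G_k + 2\sum_{j < k} G_j \star G_k.
\]
Each $H_k$ is a sum of convolutions of positive semi-definite functions, hence positive semi-definite, and its kernel has range at most of order $2^k$. Propagating the per-scale bounds on the $G_j$ through these convolutions then produces per-scale bounds on the $H_k$, with the pointwise decay exponent shifted from $d-2$ to $d-4$ to match the asymptotic of $G\star G$.

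The main obstacle is checking that after this propagation the $H_k$ still satisfy the precise quantitative bounds prescribed by Definition~\ref{d:deff}; in particular the doubly-indexed sum in the construction of $H_k$ means that one must track the gain in pointwise decay against the loss of range carefully, scale by scale. This is the substantive content of \cite{sch22}. Once it is in hand, membership in $\F_{d-4}$ is established, and Theorem~\ref{t:main1} applied with $\alpha = d-4$ delivers both items of the claimed statement.
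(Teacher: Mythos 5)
Your overall reduction is exactly the paper's: Theorem~\ref{t:gmm} is proved by citing \cite{sch22} for the membership $\textrm{GMM} \in \F_{d-4}$ and then invoking Theorem~\ref{t:main1}. So the architecture is fine, and deferring the quantitative work to \cite{sch22} is legitimate here. However, the construction you sketch for that membership has a genuine gap. Writing $G\star G=\sum_k H_k$ with $H_k=G_k\star G_k+2\sum_{j<k}G_j\star G_k$ gives a decomposition of the \emph{covariance} into positive semi-definite, finite-range pieces, but Definition~\ref{d:deff} asks for more: a white-noise kernel $q(\cdot,t)$ with conic support such that the covariance is $q\star_1 q$. Equivalently, each per-scale piece must admit a compactly supported convolution square root with support of \emph{half} its range. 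For the diagonal terms this is automatic ($G_k\star G_k$ has square root $G_k$), but for the cross terms $G_j\star G_k$ with $j<k$ the Fourier transform is $\widehat{G_j}\widehat{G_k}$, whose square root $(\widehat{G_j}\widehat{G_k})^{1/2}$ has no reason to be compactly supported in physical space; in dimension $d\ge 2$ positive-definite compactly supported functions need not admit compactly supported convolution square roots at all. So "propagating the per-scale bounds through these convolutions" does not by itself establish membership in $\F_{d-4}$ --- producing the kernel $q$ is the real obstruction, not just tracking decay exponents.

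This is precisely why the paper's own self-contained fallback (Lemma~\ref{l:gffex}) takes a different route: it uses the identity $(G\star G)(x)=\tfrac14\sum_{n}(n+1)\P(X^{\mathrm{lazy}}_n=x)$ together with the lazy walk on the midpoint lattice $\M^d$, because the Chapman--Kolmogorov relation $\P(X^{\M^d}_{2n}=x)=\sum_y\P(X^{\M^d}_n=y)\P(X^{\M^d}_n=x-y)$ hands you the convolution square root with the correct (halved) support for free. The price is that the scale-$n$ kernel has range $n$ but diffusive width $\sqrt n$, which halves the decay exponent: this argument only places the GMM in $\F_{(d-4)/2}$, enough for sharpness in the sense of Corollary~\ref{c:main1} but not for the full statement with $\alpha=d-4$. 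If you want to keep your self-convolution strategy, you must explain how the cross terms are handled --- e.g.\ by reorganising the decomposition so that every piece is manifestly a self-convolution, which is the substantive difficulty resolved in \cite{sch22} --- rather than treating it as routine bookkeeping.
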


For completeness we show in Section \ref{s:exfrd} that the GMM is in $\F_{(d-4)/2}$, which is sufficient to establish the sharpness of the phase transition in the sense of Corollary \ref{c:main1}.

\subsubsection{The Cauchy covariance kernel}
Among smooth strongly correlated Gaussian fields, a central example is the isotropic Gaussian field $F_\alpha$ on $\R^d$, $d \ge 2$, with covariance kernel 
\begin{equation}
\label{e:cauchy}
 \E[F_\alpha(0) F_\alpha(x)] = K_\alpha(|x|) \ , \quad K_\alpha(r) =  \Big(\frac{1}{1 + r^2} \Big)^{\alpha/2} \ , \quad \alpha > 0, 
 \end{equation}
known as the \textit{Cauchy kernel}; see \cite{lo13} and references therein. The strongly correlated case corresponds to $\alpha \in (0,d]$. 

\smallskip The field $F_\alpha$ is real-analytic, and as such cannot possess a finite-range decomposition (see the discussion in Section \ref{s:open}). However, we show that certain \textit{discretisations} of $F_\alpha$ are in class $\F_\alpha$, and so Theorem \ref{t:main1} applies:

\begin{theorem}
\label{t:ck1}
For every $\alpha > 0$ there exists a $c_0 = c_0(\alpha) > 0$ such that for every $c \ge c_0$ the conclusion of Theorem \ref{t:main1} with parameter $\alpha$ holds for the field $f(x) = F_\alpha(c x)|_{\Z^d}$ on $\Z^d$.
\end{theorem}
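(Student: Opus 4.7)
The plan is to exhibit an explicit finite-range decomposition of $f$ witnessing membership in $\F_\alpha$ and then invoke Theorem \ref{t:main1}. I would first construct such a decomposition for the continuous field $F_\alpha$ on $\R^d$, and then transfer it to the lattice by evaluating at $cx$, with $c$ taken large enough to absorb the finest continuous scales into a single lattice-scale contribution.

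The starting point is the scale-mixture representation of the Cauchy kernel. Using the Gamma identity $a^{-\alpha/2} = \Gamma(\alpha/2)^{-1} \int_0^\infty t^{\alpha/2-1} e^{-ta}\,dt$ applied to $a = 1+r^2$, one obtains
\[
K_\alpha(r) \;=\; \frac{1}{\Gamma(\alpha/2)} \int_0^\infty t^{\alpha/2-1} e^{-t}\, e^{-t r^2}\,dt,
\]
exhibiting $K_\alpha$ as a non-negative superposition of Gaussian kernels $g_t(r) = e^{-t r^2}$ against the Gamma measure $\nu_\alpha(dt) = t^{\alpha/2-1}e^{-t}/\Gamma(\alpha/2)\,dt$. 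Correspondingly, $F_\alpha$ may be realised as a continuous independent sum of Gaussian-kernel fields $G^{(t)}$ integrated against $\nu_\alpha$. For each $t>0$, the Gaussian kernel $g_t$ admits a classical finite-range decomposition into independent pieces at dyadic scales, via the scheme of \cite{bau13} applied to the heat semigroup (or equivalently via a smooth radial partition of the convolution square-root in Fourier space combined with a Paley--Wiener truncation). Combining the mixture in $t$ with the dyadic decomposition in the range variable $k$ and regrouping by range yields a decomposition $F_\alpha = \sum_{k \ge 0} F_\alpha^{(k)}$ into independent centred Gaussian fields whose covariances $q_k$ are compactly supported in $\{|x|\le 2^{k+1}\}$ and satisfy $\sum_k q_k = K_\alpha$; the Gamma weight $\nu_\alpha$ provides the quantitative control on the $q_k$ at each scale.

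Setting $f_k(x) = F_\alpha^{(k)}(cx)|_{\Z^d}$, the $k$-th lattice piece is supported at lattice range $2^{k+1}/c$; merging all continuous scales with $2^{k+1} < c$ into one range-zero (i.e.\ i.i.d.) lattice piece produces an independent finite-range decomposition $f = \sum_k f_k$ with lattice ranges $\le 2^{\tilde k+1}$, $\tilde k \ge 0$. The polynomial decay bound with exponent $\alpha$ is immediate from $|K_\alpha(c|x|)| \le c^{-\alpha}(1+|x|)^{-\alpha}$ on $\Z^d$.

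The main obstacle is to verify the quantitative hypotheses of Definition \ref{d:deff} for this lattice decomposition: namely the regularity and non-degeneracy of the merged bottom-scale piece, and a uniform-in-$k$ polynomial bound on the covariances $q_k$ propagated through the Gamma integral. The role of $c_0 = c_0(\alpha)$ is precisely to make the continuous-to-discrete transition at the smallest scales clean — it must be large enough that the merged scales aggregate into an admissible lattice noise piece and that the $\alpha$-dependent constants coming from the Gamma weight $\nu_\alpha$ (which degenerate as $\alpha \to 0$) fit inside the structure required by $\F_\alpha$.
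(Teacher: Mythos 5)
There is a genuine gap, and it is precisely the obstruction the paper flags in Section~\ref{s:open}: a real-analytic stationary Gaussian field cannot possess a finite-range decomposition, because such a decomposition would contradict the fact that the field is determined by its restriction to any open set. The Gaussian kernel $g_t(r) = e^{-t r^2}$ is real-analytic, so the sentence ``the Gaussian kernel $g_t$ admits a classical finite-range decomposition into independent pieces at dyadic scales'' is false, and the cited scheme of \cite{bau13} does not provide one. The same issue defeats the parenthetical alternative: a smooth radial partition of the spectral square-root of a Gaussian yields Schwartz-but-not-compactly-supported kernels, and there is no ``Paley--Wiener truncation'' that produces compactly supported positive-definite pieces while preserving the sum. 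The Gamma/Laplace-transform mixture over Gaussian kernels is precisely the paper's \emph{heat-kernel} (class-$\calS$) decomposition in \eqref{e:qalpha}, used to prove $F_\alpha\in\calS_\alpha$; it has Gaussian (not compact) tails and does not put $F_\alpha$ or its discretisation in $\F_\alpha$.

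The paper's actual route is different in the decomposition kernel, and the difference is essential. It uses Proposition~\ref{p:decomp} (from \cite{hs02}) to write $K_\alpha(x) = \int_0^\infty w_\alpha(t)\,(\chi_{t/2}\star\chi_{t/2})(x)\,dt$ with $\chi_t=\id_{|x|\le t}$, a genuinely compactly supported kernel. The price is that $w_\alpha$ \emph{cannot} be nonnegative (again by real-analyticity), only eventually nonnegative with decay $w_\alpha(t)\lesssim t^{-\alpha-d-1}$ (Proposition~\ref{p:ckdecomp}). This is exactly what forces the lattice scale $c\ge c_0(\alpha)=2t_0$: the scales $t<t_0$ where $w_\alpha$ can be negative are absorbed into a single range-zero lattice piece, and one must check the aggregate variance at those scales, $\int_0^{t_0} w_\alpha(s)\|\chi_{s/2}\|^2_{L^2}\,ds$, is nonnegative so that the bottom scale defines a genuine Gaussian. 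Your proposal never confronts this signed-weight issue because the Gamma weight $\nu_\alpha$ is honestly nonnegative — but that's because it pairs with a non-compactly-supported kernel, which cannot produce a finite-range decomposition. Separately, the decay hypothesis in Definition~\ref{d:deff} is on the $L^2$ tail of $q$, $\int_{\R^d\times[t,\infty)}q^2\,dx\,ds\le ct^{-\alpha}$, not on the covariance $K_\alpha$ itself, so the bound $|K_\alpha(c|x|)|\le c^{-\alpha}(1+|x|)^{-\alpha}$ is not the thing to verify; in the paper it follows from $w_\alpha(cs)\lesssim s^{-\alpha-d-1}$ and $\|\chi_{s/2}\|_{L^2}^2\lesssim s^d$.
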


We also show that $F_\alpha$ is in class $\mathcal{S}_\alpha$, and so the weak mixing property and its consequences in the planar case apply directly to the field $F_\alpha$:

\begin{theorem}
\label{t:ck2}
For every $\alpha > 0$ the planar Gaussian field $F_\alpha$ on $\R^2$ satisfies the conclusions of Theorem \ref{t:main3} with parameter $\alpha$.
\end{theorem}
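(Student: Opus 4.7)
The plan is to verify that $F_\alpha$ belongs to the scale-mixture class $\calS_\alpha$, after which Theorem~\ref{t:main3} applies directly and yields both the uniform box-crossing estimates and the quantitative constant $c = c_1 e^{-c_2/\alpha^2}$.

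The starting point is the classical integral identity
\[
(1+r^2)^{-\alpha/2} = \frac{1}{\Gamma(\alpha/2)} \int_0^\infty t^{\alpha/2 - 1} e^{-t(1+r^2)} \, dt, \qquad r \ge 0,
\]
which expresses the Cauchy kernel as a mixture of squared-exponential kernels. Equivalently,
\[
K_\alpha(r) = \int_0^\infty e^{-t r^2} \, d\mu_\alpha(t), \qquad d\mu_\alpha(t) = \frac{t^{\alpha/2-1} e^{-t}}{\Gamma(\alpha/2)} \, dt,
\]
where $\mu_\alpha$ is the Gamma$(\alpha/2,1)$ probability measure on $(0,\infty)$. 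This gives a natural probabilistic construction: let $T \sim \mu_\alpha$ and let $G$ be an independent stationary Gaussian field on $\R^2$ with covariance $e^{-r^2}$; then $\tilde F_\alpha(x) := G(\sqrt{T}\,x)$ has covariance $\E[e^{-T|x|^2}] = K_\alpha(|x|)$ by Fubini, so $\tilde F_\alpha \equlaw F_\alpha$.

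The steps are then as follows. First I would check the defining conditions of $\calS_\alpha$ (Definition~\ref{d:s}): the base field $G$ is continuous and has rapidly decaying correlations, the scale-mixture variable $T$ has full support on $(0,\infty)$, and its density behaves like $t^{\alpha/2-1}/\Gamma(\alpha/2)$ as $t \downarrow 0$, which is the regular-variation input needed to produce an index-$\alpha$ tail under rescaling by $\sqrt{T}$. Second, positivity of correlations is immediate from $K_\alpha > 0$, and regular variation of index $\alpha$ at infinity is read off directly from the closed form $K_\alpha(r) = (1+r^2)^{-\alpha/2} \sim r^{-\alpha}$ as $r \to \infty$. Continuity of $F_\alpha$ (needed to invoke Theorem~\ref{t:main3}) follows from smoothness of $K_\alpha$ at the origin. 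Once $F_\alpha \in \calS_\alpha$ is verified, Theorem~\ref{t:main3} gives the conclusion.

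The only genuinely non-routine point is aligning the exponent of regular variation in $\mu_\alpha$ near $0$ with the exponent of regular variation in $K_\alpha$ at infinity, to confirm the identification of the index as precisely $\alpha$ (rather than some constant multiple of it); this amounts to the standard Tauberian computation $\int_0^\infty e^{-tr^2} t^{\alpha/2-1} dt \sim \Gamma(\alpha/2) r^{-\alpha}$ as $r\to\infty$, which matches the exponent appearing in Definition~\ref{d:s}. Beyond this bookkeeping, the proof is purely a matter of exhibiting the scale-mixture representation above and citing Theorem~\ref{t:main3}.
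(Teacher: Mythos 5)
Your starting identity is exactly the one the paper uses (the Laplace-transform representation of the Cauchy kernel as a Gamma-mixture of squared-exponentials), and the overall strategy -- verify $F_\alpha\in\calS_\alpha$ and cite Theorem~\ref{t:main3} -- is the intended one. But the construction you build from it does not work. The field $\tilde F_\alpha(x)=G(\sqrt{T}\,x)$ with a \emph{single} random scale $T$ is a mixture of Gaussian fields, not a Gaussian field: its finite-dimensional marginals beyond one point are non-Gaussian (e.g.\ $(\tilde F_\alpha(x),\tilde F_\alpha(y))$ is a mixture of bivariate Gaussians with correlation $e^{-T|x-y|^2}$). Matching covariances does not imply $\tilde F_\alpha \equlaw F_\alpha$ outside the Gaussian world, so this step is simply false. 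More importantly, Definition~\ref{d:s} does not ask for a random-scale representation at all: it asks for a white-noise representation $f=q\star_1 W$ with $q(x,t)=\sqrt{w(t)}\,Q(x/t)$, i.e.\ an \emph{independent superposition over all scales}, together with (i) $w$ regularly decaying with index $\alpha+d+1$ (here $\alpha+3$, not $\alpha$) and $w(t)t^{-6}$ bounded, and (ii) the explicit pointwise bounds \eqref{e:qbounds} on $Q$. The conditions you propose to check -- full support of $T$, the exponent of its density at $0$, positivity and regular variation of $K_\alpha$ itself -- are not the conditions of the definition.

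The missing step is the conversion of the mixture identity into a self-convolution identity, which is where the heat-semigroup property enters: since $e^{-|x|^2/(4t^2)}$ is proportional to $\bigl(e^{-|\cdot|^2/(2t^2)}\star e^{-|\cdot|^2/(2t^2)}\bigr)(x)$, the change of variables $s\mapsto 1/(4t^2)$ in your integral gives
\[
K_\alpha(x)=\bigl(2\Gamma(\alpha/2)\bigr)^{-1}\int_0^\infty t^{-\alpha-1}e^{-1/(4t^2)}e^{-|x|^2/(4t^2)}\,dt
= (q_\alpha\star_1 q_\alpha)(x),\qquad q_\alpha(x,t)=\sqrt{w_\alpha(t)}\,Q(x/t),
\]
with $Q(x)=e^{-|x|^2/2}$ and $w_\alpha(t)=c_{d,\alpha}\,t^{-\alpha-d-1}e^{-1/(4t^2)}$ (this is \eqref{e:qalpha} in the paper, via \cite[Proposition A.4]{ms22}). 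One then checks directly that $w_\alpha$ is regularly decaying with index $\alpha+d+1$ (the factor $e^{-1/(4t^2)}\to 1$ at infinity), that $w_\alpha(t)t^{-6}$ is bounded because the same factor kills the singularity at $t=0$, and that the Gaussian $Q$ satisfies \eqref{e:qbounds}. Your closing ``Tauberian'' check that $K_\alpha(r)\sim r^{-\alpha}$ is a consequence of these conditions (via Proposition~\ref{p:smprop}), not a substitute for them.
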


In addition, all our results apply to certain smooth fields with algebraic decay $\alpha > 0$, which can be viewed as approximations of the field $F_\alpha$: 

\begin{theorem}
\label{t:smooth}
For every $\alpha > 0$ there exists a smooth isotropic Gaussian field $f$ on $\R^d$, $d \ge 2$, whose covariance kernel satisfies
\[  K(x) =  \E[f(0)f(x)]   \sim |x|^{-\alpha} \ , \quad |x| \to \infty, \]
for which the conclusion of Theorem \ref{t:main1} holds with parameter $\alpha$. If $d=2$, then there exists such a field for which the conclusion of Theorems \ref{t:main2} and \ref{t:main3} also hold.
 \end{theorem}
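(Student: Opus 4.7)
The plan is to construct $f$ as a dyadic sum of independent smooth finite-range fields with geometrically decreasing amplitudes, so that both the finite-range decomposition required for $\F_\alpha$ and (in the planar case) the scale-mixture representation required for $\calS_\alpha$ are built into the definition of $f$.

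\smallskip
Fix a smooth isotropic $\varphi : \R^d \to \R$ supported in the unit ball with $\varphi(0)>0$, and set $q = \varphi \star \tilde\varphi$, where $\tilde\varphi(x)=\varphi(-x)$. Then $q$ is smooth, isotropic, supported in $B(0,2)$, pointwise nonnegative, and positive-semidefinite with $q(0)>0$. I would take
\[ K(x) = \int_1^\infty t^{-\alpha-1}\, q(x/t)\, dt, \qquad K_n(x) = \int_{2^n}^{2^{n+1}} t^{-\alpha-1}\, q(x/t)\, dt, \]
so that $K = \sum_{n\ge 0} K_n$, each $K_n$ is smooth, isotropic, positive-semidefinite, supported in $B(0, 2^{n+2})$, with $K_n(0) \asymp q(0)\, 2^{-n\alpha}$. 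Taking independent smooth Gaussian fields $f_n$ with covariance $K_n$, the sum $f = \sum_{n\ge 0} f_n$ converges to a smooth stationary Gaussian field on $\R^d$ of covariance $K$, and this \emph{is} the required finite-range decomposition for $f \in \F_\alpha$.

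\smallskip
The algebraic tail $K(x) \sim C_{\alpha,q}|x|^{-\alpha}$ follows from the substitution $s=t/|x|$:
\[ K(x) = |x|^{-\alpha} \int_{1/|x|}^\infty s^{-\alpha-1}\, q(\hat x/s)\, ds, \qquad \hat x = x/|x|, \]
where the integrand on $(0,\infty)$ is $O(s^{-\alpha-1})$ at infinity and vanishes on $(0,1/2)$; dominated convergence and isotropy then give $|x|^\alpha K(x) \to C_{\alpha,q} > 0$. For $d=2$, the pointwise nonnegativity $q\ge 0$ (hence $K\ge 0$) combined with the explicit continuous scale-mixture integral form of $K$ should also place $f$ in $\calS_\alpha$, so that $f \in \F_\alpha \cap \calS_\alpha$ brings Theorems~\ref{t:main2} and~\ref{t:main3} to bear in the planar case.

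\smallskip
The main obstacle is matching the quantitative hypotheses of Definitions~\ref{d:deff} and~\ref{d:s}: uniform smoothness and sub-Gaussian tail bounds on each $f_n$, polynomial control of $K$ and its derivatives at all scales, convergence of the series in $C^\infty$, and any normalisation conditions on the scale-weight $t^{-\alpha-1}$. These checks should be routine because each $f_n$ is, in distribution, a single fixed smooth finite-range Gaussian field dilated by $2^n$ and normalised by $\asymp 2^{-n\alpha/2}$, but the bookkeeping has to reproduce exactly the form demanded by Definitions~\ref{d:deff} and~\ref{d:s}, and it may be that the discrete dyadic or the continuous integral presentation of $f$ is preferable for different parts of that verification.
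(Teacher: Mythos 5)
Your construction is essentially identical to the paper's: the paper proves this via Corollary~\ref{c:sm}, which takes exactly the scale-mixture $q(x,t)=\sqrt{w(t)}\,Q(x/t)$ with a smooth compactly supported bump $Q$ and weight $w(t)\asymp t^{-\alpha-d-1}$, verifies membership in $\F_\alpha$ via Lemma~\ref{l:smf} and in $\calS_\alpha$ by definition, and obtains the tail asymptotics $K(R)\sim c\,w(R)R^{d+1}\sim c\,R^{-\alpha}$ by the same substitution-plus-dominated-convergence argument you sketch. Your dyadic presentation $f=\sum_n f_n$ is just the discretisation of the same white-noise integral, and the ``routine bookkeeping'' you defer (smoothing the weight near $t=0$, rescaling $Q$ to meet \eqref{e:qbounds}) is handled in the paper exactly as you anticipate.
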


 \subsubsection{Fields with oscillating correlations}

Although the previous examples are all positively-correlated, Theorem \ref{t:main1} does not require positive correlations, not even at infinity. We say that a field has \textit{correlations oscillating on the scale $|x|^{-\alpha}$ if 
\begin{equation}
\label{e:oc}
\liminf_{|x| \to \infty}   K(x)   |x|^\alpha < 0  <  \limsup_{|x| \to \infty}   K(x)   |x|^\alpha   . 
\end{equation}}

\begin{theorem}
\label{t:npc}
For every $\alpha > 0$ there exists a Gaussian field on $\R^d$, $d \ge 2$, with correlations oscillating on the scale $|x|^{-\alpha}$, for which the conclusion of Theorem \ref{t:main1} holds with parameter~$\alpha$.
\end{theorem}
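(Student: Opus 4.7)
The plan is to build, for each $\alpha > 0$, a smooth stationary Gaussian field $f$ on $\R^d$ as an independent superposition
\[
f = \sum_{k \ge 0} f_k,
\]
where each $f_k$ is a smooth stationary Gaussian field with covariance $K_k(x) = 2^{-\alpha k}\phi(2^{-k}x)$ for a single fixed profile $\phi : \R^d \to \R$. For this to yield a field in $\F_\alpha$ with correlations oscillating on scale $|x|^{-\alpha}$, I need $\phi$ to be (i) smooth and compactly supported, so that each $f_k$ has finite range $O(2^k)$; (ii) positive-definite (i.e.\ $\hat\phi \ge 0$), so that each $K_k$ is a genuine covariance; and (iii) sign-changing with $\phi(0) > 0$, which will be the source of the oscillations in $K_f$.

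A concrete choice satisfying (i)--(iii) is
\[
\phi(x) = \chi(x) - c\bigl(\chi(x-a) + \chi(x+a)\bigr),
\]
where $\chi = \rho \ast \tilde\rho$ (with $\tilde\rho(y) = \rho(-y)$) is a smooth radial bump with $\hat\chi = |\hat\rho|^2 \ge 0$, the shift $a \in \R^d$ has $|a|$ larger than the diameter of the support of $\rho$, and $c \in (0, 1/2)$. Then $\hat\phi(\xi) = \hat\chi(\xi)(1 - 2c\cos(a \cdot \xi)) \ge 0$, the function $\phi$ has compact support, $\phi(0) = \chi(0) > 0$, and $\phi(\pm a) = -c\,\chi(0) < 0$, as required.

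The decomposition $f = \sum_k f_k$ is, by construction, a finite-range decomposition on dyadic scales, and a direct bound $|K_f(x)| \le \|\phi\|_\infty \sum_{k:\, 2^k \gtrsim |x|} 2^{-\alpha k} \le C(1 \vee |x|)^{-\alpha}$ delivers the required polynomial decay; the remaining conditions of Definition~\ref{d:deff} will follow exactly as for the positively-correlated smooth examples behind Theorem~\ref{t:smooth}. For \eqref{e:oc}, the self-similarity of the $K_k$ implies that for each fixed $y \in \R^d$ in the support of $\phi$, the sequence $|2^K y|^\alpha K_f(2^K y)$ stabilises as $K \to \infty$ to a quantity
\[
C(y) := |y|^\alpha \sum_{j \ge 0} 2^{-\alpha j}\phi(2^{-j} y).
\]
Taking $y$ close to $\pm a$ isolates the negative leading term $\phi(y) \approx -c\,\chi(0)$; the tail $\sum_{j \ge 1} 2^{-\alpha j}\phi(2^{-j}y)$ is dominated by $\chi(0) \cdot 2^{-\alpha j^\ast}/(1 - 2^{-\alpha})$ for some $j^\ast$ determined by $|a|$ relative to the diameter of $\mathrm{supp}(\chi)$, and taking $|a|$ large makes $j^\ast$ large, so this tail is beaten by $-c\,\chi(0)$ and $C(y) < 0$. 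Choosing $y$ close to the origin instead gives $C(y) > 0$. Hence $\liminf_{|x|\to\infty}|x|^\alpha K_f(x) < 0 < \limsup_{|x|\to\infty}|x|^\alpha K_f(x)$, which is \eqref{e:oc}, and Theorem~\ref{t:main1} applied to $f \in \F_\alpha$ concludes the proof.

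The main obstacle is the oscillation step: one must ensure that the sign change of $\phi$ at scale~$1$ survives the summation over scales. For small $\alpha$ the geometric tail $\sum_{j\ge 1} 2^{-\alpha j}$ is of order $1/\alpha$, and can in principle swamp the single negative $j=0$ contribution, so the parameters $c$ and $|a|/\mathrm{diam}(\mathrm{supp}(\chi))$ must be tuned in an $\alpha$-dependent way. This quantitative interplay is what makes the existence statement non-trivial for all $\alpha > 0$; everything else, including checking the remaining regularity hypotheses in the definition of $\F_\alpha$, is routine since the field is presented directly through its finite-range decomposition with smooth ingredients.
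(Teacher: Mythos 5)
Your construction is, in spirit, the same as the paper's (Proposition \ref{p:npc}): superpose independent rescaled copies of a compactly supported, sign-changing profile over geometric scales with weights $t^{-\alpha}$, and tune parameters in an $\alpha$-dependent way so that the sign at one scale survives the tail of the others (the paper tunes the scale ratio $k$, you tune the shift $|a|$). However, there are two genuine gaps.

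First, membership in $\F_\alpha$ is defined via a white-noise kernel: one must exhibit $f=q\star_1 W$ with $q(\cdot,t)$ compactly supported and conic support, not merely a decomposition of the covariance into compactly supported positive-definite pieces. This matters because the entire proof of Theorem \ref{t:main1} (the OSSS algorithm on white-noise coordinates, the Russo inequality via the RKHS) runs on $q$, and in $d\ge 2$ a compactly supported positive-definite function does not automatically admit a compactly supported convolution square root. For your specific $\phi$ the step can be repaired by a Fej\'er--Riesz factorisation, $1-2c\cos(a\cdot\xi)=|b_0+b_1e^{ia\cdot\xi}|^2$ with $b_0^2+b_1^2=1$, $b_0b_1=-c$, giving $\phi=u\star\tilde u$ with $u=b_0\rho+b_1\rho(\cdot-a)$ compactly supported --- but note that this $u$ (a bump minus a weighted shifted bump) is exactly where the paper \emph{starts}; working at the kernel level from the outset makes the issue disappear, and also makes \eqref{e:fdecay} immediate.

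Second, the positive half of \eqref{e:oc} is not established. Your stabilisation formula omits the scales just below $|x|$: the correct limit is
\[ |2^Ky|^{\alpha}K_f(2^Ky)\ \longrightarrow\ |y|^{\alpha}\sum_{j\in\Z}2^{-\alpha j}\phi(2^{-j}y), \]
and the terms $j<0$ (i.e.\ $\phi(2^m y)$, $m\ge 1$) vanish only when $2|y|$ exceeds the outer radius $|a|+r_\chi$ of $\mathrm{supp}\,\phi$ --- true for $y$ near $\pm a$, false for $y$ near the origin. Worse, for $y$ a small multiple of $a$ there is an $m\ge1$ with $2^m y\approx a$, contributing roughly $-c\,(|a|/|y|)^{\alpha}\chi(0)$, while the positive terms $\sum_m 2^{\alpha m}\chi(2^m y)$ total at most $\chi(0)(r_\chi/|y|)^{\alpha}/(1-2^{-\alpha})$; under the very tuning $(|a|/r_\chi)^{\alpha}>1/(c(1-2^{-\alpha}))$ that you need for the negative half, the negative term wins, so $C(y)<0$ there. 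The fix is easy but must be said: evaluate at $x=2^Ky$ with $|y|$ comparable to $|a|$ but in a direction orthogonal to $a$ (so no rescaling $2^m y$ meets the balls around $\pm a$ and only the nonnegative $\chi$-terms survive), or, as in the paper, choose the two evaluation points at distances comparable to the diameter of the support so that only the top scale contributes at either of them.
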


\subsection{Overview of the proof}
\label{s:outline}
To prove the sharpness of the phase transition we use a strategy inspired by \cite{dcrt20} but with some key innovations that exploit the Gaussian setting. The basic idea, introduced in \cite{dcrt19a}, is to consider a randomised algorithm that determines the event $\{\Lambda_{1/2} \leftrightarrow \partial \Lambda_R\}$, and then apply the OSSS inequality to deduce that
\begin{equation}
\label{e:osss1}
 \textrm{Var}[ \id_{\{\Lambda_{1/2} \leftrightarrow \partial \Lambda_R\}} ] = \theta_R(\ell) (1 - \theta_R(\ell) ) \le \frac{1}{2} \sum_i \Rev_i \Inf_i ,   
  \end{equation}
where $\Rev_i$ and $\Inf_i$ denote respectively the revealment probability and the resampling influences of the $i^\textrm{th}$ coordinate in a `factor of i.i.d.' representation of the model (see Section \ref{s:prelim} for precise statements). As shown in \cite{dcrt19a,dcrt20}, for many models one can choose an algorithm satisfying the bound $\Rev_i \le \frac{c}{R} \sum_{r \le R} \theta_r(\ell)$, and then combine \eqref{e:osss1} and a `Russo formula' of the type $\frac{d}{d\ell} \theta_R(\ell) = c \sum_i\Inf_i$ to derive the differential inequality \eqref{e:mens2}.

\smallskip
Recently this strategy was successfully implemented in the case of finite-range Gaussian fields by replacing the Russo formula with a Russo-type inequality $\frac{d}{d\ell} \theta_R(\ell) \ge  c \sum_i \Inf_i$, which is a consequence of Gaussian isoperimetry \cite{dm21}. However, for models with unbounded dependence the strategy is problematic since any algorithm that determines $\{\Lambda_{1/2} \leftrightarrow \partial \Lambda_R\}$ will a priori need to reveal the whole model.

\smallskip
Following \cite{dcrt20}, an elegant way to proceed is to consider an algorithm that reveals coordinates in a \textit{finite-range decomposition} of the model; this way, the algorithm need not reveal all coordinates to determine the event. A key observation is that, even though the revealment probabilities are a priori increasing in the scale $t$ of the decomposition, the Russo-type inequality $\frac{d}{d\ell} \theta_R(\ell) \ge  c t^{\alpha/2} \Inf_i$ \textit{improves at higher scales} (see Proposition \ref{p:russo}). If one uses, as in \cite{dcrt20}, the union bound to control the revealment probabilities at high scales, in the case $\alpha > 2(d-1)$ one recovers \eqref{e:mens2}, and hence sharpness of the phase transition with exponential decay. This gives a new proof of some recent results of Severo \cite{s21}, which used a strategy similar to \cite{dgrs20} (and covered all $\alpha > d$). However, since we are primarily interested in the strongly correlated case $\alpha < d \le 2(d-1)$, we need further ideas.

\smallskip
To extend to smaller $\alpha$ we make another key observation: by monotonicity there exists a scale $t = t_R$ at which the resampling influence of \textit{all coordinates above scale $t$ simultaneously} is approximately equal to $\textrm{Var}[\id_{\{\Lambda_{1/2} \leftrightarrow \partial \Lambda_R\}}]$. At this scale there are two ways to obtain a lower bound for $\frac{d}{d\ell} \theta_R(\ell)$ -- either one applies the OSSS-based method described above to the coordinates \textit{below} this scale, which leads to the inequality 
\begin{equation}
\label{e:bound1}
\theta_R'(\ell) \ge  \frac{ c t^{d-1 - \alpha/2} \theta_R(\ell)}{ \frac{1}{R} \sum_{r = 1}^{R} \theta_r(\ell) } ,  
\end{equation}
 or one applies a Russo-type inequality directly to all coordinates \textit{above} this scale, which leads instead to
   \begin{equation}
\label{e:bound2}
  \theta_R'(\ell) \ge c  t^{\alpha/2}  \theta_R(\ell) . 
  \end{equation}
Even though $t = t_R$ is unknown, taking a \textit{minimum} over $t > 0$ of the \textit{maximum} of \eqref{e:bound1} and \eqref{e:bound2} yields the differential inequality \eqref{e:mens3} (with $\gamma = \alpha/(2(d-1))$), which although weaker than \eqref{e:mens2} is still powerful enough deduce Theorem \ref{t:main1}.

\smallskip
For the stronger bound on the percolation density in Theorem \ref{t:main2}, we use a similar argument but make the additional observation that planarity, continuity, and positive-associations imply that the revealment probabilities $\Rev_i$ at large scales are controlled by the critical one-arm exponent $\eta_1$ (strictly speaking this is only true above criticality, which is sufficient); this leads to the inequality $\beta \le \max\{1, 2\eta_1/\alpha\}$. Since the weak mixing property for strongly correlated fields implies that $\eta_1 \le 1/2$ (see Corollary \ref{c:main3}), this gives Theorem~\ref{t:main2}.

\smallskip
One can observe that this argument establishes the mean-field bound $\beta \le 1$ as soon as we have the inequality $\eta_1 \le \alpha/2$ (see Remark \ref{r:onearm}). In fact we conjecture that $\eta_1 \le \alpha/2$ for all $\alpha > 0$, with equality for sufficiently small $\alpha$.

\subsection{Open questions}
\label{s:open}

In this section we briefly overview some remaining challenges in proving sharpness for Gaussian percolation models.

\subsubsection{Real-analytic fields}

As mentioned above, real-analytic fields -- such as the field $F_\alpha$ with Cauchy covariance kernel \eqref{e:cauchy} -- do not possess a finite-range decomposition, and the sharpness of the phase transition remains open in the strongly correlated case. Indeed, one can observe that a finite-range decomposition would be inconsistent with the field being determined by its values on an open set. Nevertheless, many real-analytic fields enjoy weaker `heat kernel' decompositions of a similar type (see Section \ref{s:exfrd}), in which the covariance kernels in the decomposition have Gaussian, rather than compactly-supported, tails. While it is natural to expect that Gaussian tails can be neglected using approximation arguments, using a naive OSSS-based strategy yields a differential inequality of the form
\[  \theta_R'(\ell) \ge \frac{c(R) \, \theta_R(\ell)}{  \frac{1}{R} \sum_{r = 1}^{R} \theta_r(\ell)   }  ,   \]
for some $c(R) \to 0$ as $R \to \infty$, which is too weak to deduce sharpness.

\smallskip
A possible work-around would be to use an interpolation scheme to compare the model to one with a more suitable decomposition, as in \cite{dgrs20,s21}, however at present this strategy has only been shown to work for smooth fields in the `short-range' case $\alpha > d$.

\smallskip
Another Gaussian field which has a similar `heat kernel' decomposition is the continuum GFF \cite[Section 4.3]{ddg22}; since the continuum heat kernel has unbounded support this decomposition has Gaussian tails. In the GFF literature it is common to work with the smooth mollification of the continuum GFF in which the decomposition is truncated at small scales, and sharpness of the phase transition for this mollified GFF remains open. 

\subsubsection{Monochromatic random waves}
Even more challenging is to prove sharpness for the \textit{monochromatic random waves}, which are the isotropic Gaussian fields on $\R^d$ whose covariance kernel is the Fourier transform of the spherical measure. These fields do not possess any comparable (even approximate) finite-range decomposition, and we currently lack a quantitative understanding of the percolation phase transition even in the planar case (see \cite{mrv20,mui23} for the best known results).

\subsection{Organisation of the paper}
In Section \ref{s:class} we introduce multi-scale decompositions of Gaussian fields, define the classes $\F$ and $\calS$ of fields to which our results apply, and show that the examples in Section \ref{s:ex} belong to these classes. In Section \ref{s:sharp} we establish our main result on the sharpness of the phase transition (Theorem~\ref{t:main1}). In Section~\ref{s:wm} we prove the weak mixing property (Proposition~\ref{p:wm}), and deduce Theorems \ref{t:main2} and \ref{t:main3} as consequences. The appendix collects some technical results on Gaussian fields.

\subsection{Notation}
We use $\N_0$ and $\N$ to denote $\{0, 1, \ldots \}$ and $\{1, 2,\ldots \}$ respectively, and also use $\R^+ = [0,\infty)$. All Gaussian variables and fields are centred unless otherwise specified. A \textit{standard Gaussian} is a Gaussian random variable with unit variance. For a function $q(x,t) : \R^d \times\R^+$ and a multi-index $(k,i) \in \N_0^d \times \N_0$, $\partial^{(k,i)} q $ denotes the derivative $\partial_x^k \partial^i_t q$. A continuous Gaussian field on $\R^d$ is \textit{$C^k$-smooth} if it is in $C^k(D)$ for every compact $D \subset \R^d$. For positive functions $f$ and $g$ we use $f \sim g$ to denote that $f(x)/g(x) \to 1$ as $x \to \infty$.

%%%%%%
\smallskip
\section{Multi-scale decompositions of Gaussian fields}
\label{s:class}

Our results apply to a class of Gaussian fields which have a certain \textit{multi-scale white noise decomposition}, which we introduce in this section. We refer to \cite[Chapter 5]{at07} for a general introduction to white noise representations of Gaussian fields.

 Let $W = W(x,t)$ denote the standard white noise on $\R^d \times \R^+$, with the extra dimension being a `scale' parameter. Let $L^2_\text{sym}(\R^d \times \R^+)$ be the subset of $q(x,t) \in L^2(\R^d \times \R^+)$ satisfying $q(x,t) = q(-x,t)$. Then for $q(x,t) \in L_\text{sym}^2( \R^d \times \R^+)$,
\[ f(x) = (q \star_1 W)(x) = \int_{\R^d \times \R^+} q(x-y, t) dW(y,t) \]
defines a stationary Gaussian field on $\R^d$, with covariance kernel
\[ K(x) = \E[f(0)f(x)] =  (q \star_1 q)(x)  = \int_{\R^d \times \R^+} q(x-y, t) q(y,t) \, dy dt, \]
 where $\star_1$ denotes convolution restricted to the first coordinate.
 
 \smallskip We will make the following additional assumptions on $q$:

\begin{assumption}[Smoothness and non-degeneracy]
\label{a:main}
$\,$
\begin{itemize}
\item (Smoothness) For every $t \ge 0$, $q(\cdot,t) \in C^3(\R^d)$. Moreover, for every multi-index $k$ with $|k| \le 3$, $\partial^{k,0} q \in L^2(\R^d \times \R^+)$ and  $\partial^{k,0} q$ is locally bounded on $\R^d \times \R^+$.
\item (Non-degeneracy) There exists a $t \ge 0$ such that $q(\cdot,t)$ is non-zero and $q(\cdot,t') \to q(\cdot, t)$ in $L^1(\R^d)$ as $t' \to t$.
\end{itemize}
\end{assumption} 

\noindent A consequence of the smoothness in Assumption \ref{a:main} (see Lemma \ref{l:cont}) is that $f = q \star_1 W$ is a.s.\ $C^2$-smooth, with 
\begin{equation}
\label{e:wnvar}
\var[ \partial^k f(0) ] = \|\partial^{k,0} q\|^2_{L^2(\R^d \times \R^+)}
\end{equation}
for every multi-index $k$, $|k| \le 2$. Moreover, for every $\ell$, the set $\{f = \ell\}$ is a.s.\ a collection of $C^2$-smooth simple hypersurfaces. We state some more consequences of Assumption \ref{a:main} in Corollary \ref{c:cont}.

\smallskip
We distinguish two, partially overlapping, subclasses of fields $f = q \star_1 W$: 
\begin{enumerate}
\item \textit{Finite-range}, in which $x \mapsto q(x,t)$ has compact support for every $t > 0$.
\item \textit{Scale-mixture}, in which $q$ has the product form $q(x,t) = \sqrt{w(t)} Q(x/t)$. 
\end{enumerate}
We emphasise that, although there exist fields that are in both classes, there are important examples that are in the second class but not the first, e.g.\ the field $F_\alpha$ defined in Section~\ref{s:ex}. 

\smallskip 
For discrete fields on $\Z^d$, it may be more natural to replace the white noise $W$ by a collection of independent standard Gaussians indexed by $\Z^d \times \N$, however this is not without loss of generality (see Remark~\ref{r:spec} below). For our applications it is more convenient to consider a wider class of discrete fields that are restrictions of $q \star_1 W$ to the lattice $\Z^d$. This allows us to treat continuous and discrete fields simultaneously.

\subsection{Finite-range decompositions and the class $\F$}
Recall that a field $f = q \star_1 W$ is said to have a \textit{finite-range decomposition} if  $x \mapsto q(x,t)$ has compact support. By reparametrising~$t$, without loss of generality we may suppose $\text{Supp}(q) \subseteq \{ (x,t) :  |x| \le t/2 \}$, and in that case we say that $q$ has \textit{conic support}. To define the class $\F = \cup_{\alpha > 0} \F_\alpha$ we impose an additional decay assumption on $q$:

\begin{definition}[Class $\F_\alpha$]
\label{d:deff}
A continuous Gaussian field $f$ on $\R^d$, $d \ge 2$, is in class $\F_\alpha$, $\alpha > 0$, if it can be represented as $f = q \star_1 W$ for $q \in L_\text{sym}^2(\R^d \times \R^+)$ satisfying Assumption \ref{a:main} and:
\begin{itemize}
\item(Finite-range) $q$ has conic support.
\item(Decay) There is a $c > 0$ such that, for every $t \ge 1$,
\begin{equation}
\label{e:fdecay}
 \int_{\R^d \times [t,\infty)} q^2(x,s) \, dx ds   \le  c t^{-\alpha} .
 \end{equation}
\end{itemize}
A discrete Gaussian field $f$ on $\Z^d$ is in class $\F_\alpha$, $\alpha > 0$, if it can be represented as $f = (q \star_1 W)|_{\Z^d}$ for $q \in L^2_\text{sym}(\R^d \times \R^+)$ satisfying the same conditions, except perhaps the smoothness in Assumption \ref{a:main}.
\end{definition}

Recall that a Gaussian field on $\R^d$ or $\Z^d$ is said to be $R$-\textit{range dependent} if $\E[f(x)f(y)] = 0$ for all $|x-y| \ge R$, and \textit{finite-range dependent} if it is $R$-range dependent for some $R > 0$. If $f \in \F$, one can decompose $f = \sum_{n \in \N} f_n$ into a sequence $(f_n)_{n \in \N}$ of independent finite-range dependent Gaussian fields as follows. Consider a partition $(B_n)_{n \in \N}$ of $\R^+$ into intervals, and define $f = \sum_{n \in \N} f_{B_n}$, where 
\[ f_{B_n} = (q|_{ \R^d \times B_n} \star_1 W)(x) = \int_{\R^d \times B_n} q(x-y, t) dW(y,t) . \]
By the assumption of conic support,
\[    \text{Supp}( q |_{\R^d \times B_n} )  \subseteq \{(x,t) : |x| \le \sup B_n/2, t \in B_n \} ,  \]
 and so $f_{B_n}(x)$ is measurable with respect to the white noise $W$ restricted to $\{(y,t) :  |y-x| \le \sup B_n/2 , t \in B_n\}$. In particular $(f_{B_n})_{n \in \N}$ is an independent sequence and each $f_{B_n}$ is $(\sup B_n)$-range dependent.

\begin{remark}[Special cases of discrete fields]
\label{r:spec}
Consider a discrete field on $\Z^d$ given by
\[ f(i) =  \sum_{(j,n) \in \Z^d \times \N} \tilde{q}(i-j,n) W(j,n)  , \]
where $\tilde{q}(i, n) \in L^2_\text{sym}(\Z^d \times \N)$ and $ (W(i,n))_{(i,n) \in \Z^d \times \N}$ is a collection of independent standard Gaussians. Then define $q \in L^2_\text{sym}(\R^d \times \R^+)$ as $q(x) = \tilde{q}(i)$ for $x \in (i,n) + (-1/2,1/2]^d \times [0,1)$, $(i,n) \in \Z^d \times \N$, and $q(x) = 0$ otherwise. It is simple to check that $f  \stackrel{d}{=}  (q \star_1 W)|_{\Z^d}$.

\smallskip 
In our applications to the GFF and GMM, it turns out that we require a slightly more general construction involving the `midpoint graph' (see Section \ref{s:exfrd} for why this is necessary). Namely, we define $\M^d$ to be the lattice formed by taking $\Z^d$ and adding vertices at the mid-points of every edge. Then for a function $\tilde{q}_\text{sym}(i, n) \in L^2(\M^d \times \N)$, and $(W(i,n))_{(i,n) \in \M^d \times \N}$ a collection of independent standard Gaussians, we consider the Gaussian field on $\Z^d$
\[ f(i) =  \sum_{(j,n) \in \M^d \times \N} \tilde{q}(i-j,n) W(j,n) . \]
In this case, defining $q \in L^2_\text{sym}(\R^d \times \R^+)$, $q(x)= 2^{-d/2} \tilde{q}(i)$ for $x \in (i,n) + (-1/4,1/4]^d \times [0,1)$, $(i,n) \in \M^d \times \N$, and $q(x) = 0$ otherwise, one can check that $f \stackrel{d}{=} (q \star_1 W)|_{\Z^d}$

\smallskip
We note that the proof of our main results could be slightly simplified in these cases (in particular for the GFF or GMM) due to the absence of certain technicalities in the continuum (see Section \ref{s:cont}), but for brevity we prefer to treat them together with the general case.
\end{remark}

\subsection{Scale-mixture decompositions and the class $\calS$}
Recall that a field $f = q \star_1 W$ is said to have a \textit{scale-mixture decomposition} if $q(x,t) = \sqrt{w(t)} Q(x/t)$; we call $w$ the \textit{weight} and $Q$ the \textit{kernel}. We define the class $\calS$ by imposing additional conditions on $w$ and $Q$; although we only define $\calS$ for continuous fields, it has a natural extension to discrete fields (as for $\F$).

\smallskip
Recall that a non-negative continuous function $\kappa: \R^+ \to \R$ is said to be \textit{regularly decaying (at infinity)} with \textit{index $\alpha > 0$} if it is eventually positive and, for every $v > 0$,
\[ \frac{\kappa(uv)}{\kappa(u)} \to v^{-\alpha} , \quad \text{as } u \to \infty . \]

\begin{definition}[Class $\calS_\alpha$]
\label{d:s}
A continuous Gaussian field $f$ on $\R^d$, $d \ge 2$, is in class $\calS_\alpha$, $\alpha > 0$, if can be represented as $f = q \star_1 W$, $q(x,t) = \sqrt{w(t)} Q(x/t)$, where:
\begin{itemize}
\item  $w \in C^0(\R^+)$ is non-negative, is regularly decaying with index $\alpha+d +1$, and $w(t) t^{-6}$ is bounded.
\item  $Q \in C^3(\R^d)$ is isotropic and satisfies, for all $x \in \R^d$,
\begin{equation}
\label{e:qbounds}
Q(x) \ge  \frac{1}{100} \id_{|x| \le 1/100} \quad \text{and} \quad \max_{|k| \le 3} |\partial^k Q(x)| \le 100 e^{-|x|/100} .
 \end{equation}
\end{itemize}
\end{definition}

\noindent Note that if $q(x,t) = \sqrt{w(t)} Q(x/t)$ satisfies the assumptions in Definition \ref{d:s}, then $q$ also satisfies Assumption \ref{a:main}. Observe also that $f \in \calS$ is isotropic, and we will abuse notation slightly by writing its covariance kernel as $K(R) = \E[f(0)f(x)]$ for $|x| = R \ge 0$.

\begin{remark}
Some of the requirements in $\calS$ are imposed for simplicity, and are not essential for our results. For instance, we could relax isotropy to symmetry under axes reflection and rotation by $\pi/2$, and $Q$ and its derivatives could decay polynomially with large exponent, rather than exponentially. 

\smallskip
Moreover, the constants in \eqref{e:qbounds} are chosen for concreteness, and most of our results are unaffected by their value. The exception is the explicit dependence on $\alpha$ in Theorem \ref{t:main3} (and also \eqref{e:gbound}), where the constants $c_1$ and $c_2$ are only universal once the constants in \eqref{e:qbounds} are fixed. We also note that, if $Q(0) > 0$, then by simply rescaling the pair $(w,Q)$ one can always ensure that the first bound in \eqref{e:qbounds} is satisfied.
\end{remark}

For future use we collect some properties of $f \in \calS$: 

\begin{proposition}
\label{p:smprop}
Consider a Gaussian field $f \in \calS_\alpha$, $\alpha>0$, and let $w$ and $Q$ be respectively the weight and kernel in its scale-mixture representation. Then:
\begin{itemize}
\item As $R \to \infty$, $K(R) \sim c_{\alpha,Q}  w(R) R^{d+1}$, where
\[ c_{\alpha,Q} =  \int_{\R^+}   t^{-\alpha-1}  \times (Q \star Q)(1/t) dt  \in (0, \infty) . \]
\item For every $R >0$, $f$ can be decomposed as
\begin{equation}
\label{e:fra}
f = f_R + g_R , 
\end{equation}
where $f_R$ is a stationary $C^2$-smooth $R$-range dependent Gaussian field, and $g_R$ is a stationary $C^2$-smooth Gaussian field such that, for every multi-index $k$ with $|k| \le 1$,
\begin{equation}
\label{e:gbound}
 \limsup_{R \to \infty}  \frac{ \var[\partial^k g_R(0)] }{w(R) R^{d+1-2|k|} }  \le c_1 c_2^\alpha  \Gamma(\alpha)  
 \end{equation}
for universal constants $c_1,c_2 > 0$, where $\Gamma$ is the Gamma function.
\end{itemize}
\end{proposition}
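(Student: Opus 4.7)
The plan is to work directly from the white-noise representation $f = q \star_1 W$ with $q(x,t) = \sqrt{w(t)}\, Q(x/t)$, for which a direct computation gives
\[ K(x) = (q \star_1 q)(x) = \int_0^\infty w(t)\, t^d (Q \star Q)(x/t)\, dt. \]
For the first item, I would substitute $u = R/t$ (where $R = |x|$) to obtain $K(R) = R^{d+1} \int_0^\infty w(R/u)\, u^{-d-2} (Q \star Q)(u)\, du$. Pointwise, regular variation of $w$ with index $\alpha + d + 1$ gives $w(R/u)/w(R) \to u^{\alpha+d+1}$, so the integrand converges to $u^{\alpha-1}(Q \star Q)(u)$; a further change $t = 1/u$ identifies the limit with $c_{\alpha,Q}$. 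To pass the limit inside the integral I would invoke Potter's bound for regularly varying functions, which controls $w(R/u)/w(R) \le C \max(u^{\alpha+d+1+\delta}, u^{\alpha+d+1-\delta})$ uniformly on $\{u : R/u \ge s_0\}$; combined with the exponential tail of $Q \star Q$ inherited from that of $Q$, this supplies an integrable envelope. The range $u > R/s_0$ (where Potter does not apply directly) is handled using the growth condition $w(t) t^{-6} = O(1)$ from Definition \ref{d:s}, whose polynomial contribution is swamped by the exponential factor coming from $(Q \star Q)(u)$.

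For the second item, I would introduce a smooth radial cutoff $\chi_R \in C^3(\R^d)$ with $\chi_R \equiv 1$ on $\{|x| \le R/4\}$, $\chi_R \equiv 0$ on $\{|x| \ge R/2\}$, and $\|\nabla^j \chi_R\|_\infty \le C R^{-j}$, and then set $f_R = (q \chi_R) \star_1 W$ and $g_R = f - f_R = (q(1-\chi_R)) \star_1 W$. Since the kernel $q(\cdot,t)\chi_R(\cdot)$ vanishes off $\{|x| \le R/2\}$, two variables $f_R(x), f_R(x')$ with $|x-x'| > R$ depend on disjoint patches of $W$, yielding $R$-range dependence; stationarity and $C^2$-smoothness of both $f_R$ and $g_R$ follow from Lemma \ref{l:cont} applied to the smooth square-integrable kernels $q\chi_R$ and $q(1-\chi_R)$. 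The variance bound then reduces to $\var[\partial^k g_R(0)] = \|\partial^{k,0}(q(1-\chi_R))\|_{L^2}^2$; the Leibniz rule together with $(1-\chi_R)^2 \le \id_{\{|x|\ge R/4\}}$ and $(\partial^k \chi_R)^2 \le C R^{-2|k|}\id_{\{R/4 \le |x| \le R/2\}}$, followed by the substitution $u = x/t$, then $s = R/t$, and the exponential bound on $\partial^k Q$, yield, after absorbing the lower-order cross/boundary terms,
\[ \var[\partial^k g_R(0)] \le C R^{d+1-2|k|} \int_0^\infty w(R/s)\, s^{-d-2+2|k|}\bigl(s^{d-1}+1\bigr) e^{-s/200}\, ds . \]

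The main obstacle is to turn this estimate into the precise $c_1 c_2^\alpha \Gamma(\alpha)$ dependence. I would split the $s$-integral into $[0,1]$, $[1, R/s_0]$, and $(R/s_0, \infty)$, using the two sides of Potter's bound on the first two intervals to invoke dominated convergence, and thereby obtain
\[ \limsup_{R\to\infty} \frac{\var[\partial^k g_R(0)]}{w(R)\, R^{d+1-2|k|}} \le C \int_0^\infty \bigl(s^{\alpha-1+2|k|} + s^{\alpha+d-2+2|k|}\bigr) e^{-s/200}\, ds , \]
which evaluates in terms of Gamma functions $200^{\alpha+2|k|}\Gamma(\alpha+2|k|) + 200^{\alpha+d-1+2|k|}\Gamma(\alpha+d-1+2|k|)$. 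At $|k|=0$ the factor $\Gamma(\alpha)$ from the near-zero singularity $s^{\alpha-1}$ dominates as $\alpha \to 0$, producing the claimed form $c_1 c_2^\alpha \Gamma(\alpha)$; for $|k|=1$ the stronger polynomial factor $s^{2|k|}$ leaves an extra $\alpha$-independent constant. On the tail $s > R/s_0$, the bound $w(R/s) \le C(R/s)^6$ combined with $e^{-s/200} \le e^{-R/(200 s_0)}$ produces a contribution of order $R^6 e^{-R/(200 s_0)}$, which is negligible relative to $w(R)\, R^{d+1-2|k|}$ since $1/w(R)$ grows only polynomially.
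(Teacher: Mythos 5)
Your proposal is correct and follows essentially the same route as the paper: the same change of variables (yours in the reciprocal parametrisation $u=R/t$), Potter's bounds plus dominated convergence with the exponential decay of $Q$ supplying the integrable envelope and the near-origin/large-$u$ region handled via the polynomial growth bound on $w$, and for the second item the identical smooth radial cutoff decomposition with the variance computed as the $L^2$ norm of the truncated kernel and the limit identified as a Gamma function. The only cosmetic difference is that you carry an explicit polynomial factor $(s^{d-1}+1)$ where the paper absorbs it into a slightly weaker exponential, which does not affect the conclusion.
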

\begin{proof}
For the first item, by the change of variables  $(y,t) \mapsto (Rty,Rt)$ we have
\begin{align*}
  K(R)  = (q \star_1 q)(R) &=  \int_{\R^d \times \R^+} w(t) Q(|y|/t)Q(|R-y|/t) \, dy dt  \\
  & = w(R) R^{d+1} \int_{\R^+} w(tR)/w(R) \times t^d  \times (Q \star Q)(1/t) \, dt .
  \end{align*}
Since $w(tR) / w(R) \to t^{-\alpha-d-1}$ for every $t > 0$, assuming we may pass to the limit in the integrand yields that $K(R) \sim c_{\alpha,Q} w(R) R^{d+1}$. So let us justify the passage to the limit.

\smallskip
 Fix $\delta \in (0, \alpha)$. By Potter's bounds (see \cite[Theorem 1.5.6]{bgt87}), there exists a $x_0 = x_0(\delta)$ such that, for all $x,y \ge x_0$,
 \begin{equation}
 \label{e:potter}
 0 \le w(y)/w(x) \le 2 \max\{ (y/x)^{-\alpha -d-1- \delta } , (y/x)^{-\alpha - d - 1 +\delta}  \} .
 \end{equation}
 Moreover, by the decay assumption on $Q$ and the Cauchy-Schwarz inequality,
  \begin{equation}
  \label{e:qqbound}
    (Q \star Q )(1/t)   \le  \begin{cases} 
  c_3 e^{-1/(50t)} & t \in (0,1]  , \\
  \|Q\|_{L^2}^2 & t \in [1,\infty) ,
 \end{cases} 
 \end{equation}
 for a constant $c_3 > 0$. Combining these, for $R \ge x_0$ we have
 \[   \id_{ t \ge x_0/R } \times w(tR)/w(R) \times t^d  \times (Q \star Q)(1/t) \le  2 \times  \begin{cases} 
  c_3  e^{-1/(50t)}    & t \in (0,1] , \\
 \|Q\|_{L^2}^2 \times t^{-\alpha - 1 + \delta} & t \in [1,\infty) .
 \end{cases}  \] 
In particular, the right-hand side of the above display is integrable on $\R^+$. Observing also
\begin{align*}
  \int_{t \le x_0/R} w(tR)/w(R) \times t^d  \times (Q \star Q)(1/t) \, dt  & \le  c_4\|w\|_\infty w(R)^{-1}  \int_{t \le x_0/R}  e^{-1/(50t)} \, dt \\
  & \le c_5 w(R)^{-1}  e^{-R/(50 x_0)} \to 0 
   \end{align*}
for $R \ge x_0$ and some $c_4, c_5> 0$, we see that the integral over $t \in (0, x_0/R)$ is negligible. Hence the passage to the limit is justified by the dominated convergence theorem.
 
\smallskip 
For the second item, we fix $\psi: \R^+ \to [0, 1]$ a smooth function such that $\psi(x) = 1$ for $x \le 1/4$ and $\psi(x) = 0$ for $x \ge 1/2$. Then for every $R > 0$, $f$ can be decomposed as $f = f_R + g_R$, where
\[ f_R(x) = \int_{ \R^d \times \R^+} \psi(|x-y|/R) q(x-y,t)  dW(y,t)   \]
and
\[ g_R(x) = \int_{ \R^d \times \R^+} (1-\psi(|x-y|/R)) q(x-y,t) dW(y,t) . \]
By Lemma \ref{l:cont}, both $f_R$ and $g_R$ are stationary $C^2$-smooth Gaussian fields. Since $\psi(|\cdot|/R)q(\cdot,t)$ is supported on $\{|x| \le R/2\}$, the field $f_R$ is $R$-range dependent. To establish the bound \eqref{e:gbound}, by Lemma \ref{l:cont} (specifically \eqref{e:equal}) and the change of variables $(x,t) \mapsto (xtR,tR)$, we have, for $|k| \le 1$,
\begin{align*}
 & \var[\partial^k g_R(0)]    =   \int_{ \R^d \times \R^+} w(t)  \Big( \big(\partial^k \big( ( 1 - \psi(|\cdot|/R) ) Q(|\cdot|/t) \big) \big)(x) \Big)^2  \, dx dt  \\
  &  =  w(R) R^{d+1-2|k|} \\
  & \qquad \times  \int_{\R^+ } w(Rt) / w(R)  \times  t^{d-2|k|} \times \Big(  \int_{\R^d} \Big( \big(\partial^k \big( ( 1 - \psi(t |\cdot|) ) Q(|\cdot|) \big) \big)(x) \Big)^2   dx \Big) dt   .
 \end{align*}
 Since $1-\psi$ is smooth and supported on $\{|x| \ge 1/4\}$, and by \eqref{e:qbounds}, $\var[\partial^k g_R(0)]  $ is at most
  \begin{align*}
 &   c_6 w(R) R^{d+1-2|k|}   \int_{\R^+ } w(Rt) / w(R)  \times  t^{d-2|k|} \times \Big( \int_{ |x| \ge 1/(4t) } \max_{ |k'| \le 1 }   \big(\partial^{k'} Q(|\cdot|)  \big)^2(x)  \, dx \Big) dt \\
    & \quad \le c_7 w(R) R^{d+1-2|k|}   \int_{\R^+ } w(Rt) / w(R)  \times  t^{d-2|k|} \times  \Big( \int_{ |x| \ge 1/(4t) }e^{-|x|/50}  \, dx \Big) dt  \\
    & \quad \le c_8 w(R) R^{d+1-2|k|}   \int_{\R^+ } w(Rt) / w(R)  \times  t^{d-2|k|} e^{-c_9/t} dt  
 \end{align*}
  for constants $c_6, c_7,c_8, c_9> 0$ depending only on $\psi$. Since $w(tR) / w(R) \to t^{-\alpha-d-1}$, and as before justifying the passage to the limit in the integrand using Potter's bounds \eqref{e:potter} and dominated convergence, this yields 
 \[ \frac{ \var[\partial^k g_R(0)] }{w(R) R^{d+1-2|k|} } \le 2 c_8 \int_{\R^+} t^{-\alpha-2|k|-1}  e^{-c_9/t} dt = 2c_8 \, c_9^{\alpha + 2|k|} \, \Gamma(\alpha+2|k|) \le  c_{10} \, c_{11}^{\alpha} \, \Gamma(\alpha)   ,  \]
  for sufficiently large $R$ and universal $c_{10}, c_{11} > 0$. \end{proof}

We also give sufficient conditions for a field with scale-mixture representation to be in $\F$:

\begin{lemma}
\label{l:smf}
Let $f$ be a stationary a.s.\ $C^2$-smooth Gaussian field which can be represented as $f = q \star_1 W$, $q(x,t) = \sqrt{w(t)} Q(x/t)$, where $w$ and $Q$ are non-negative, not identically zero, and satisfy:
\begin{itemize}
\item $w \in C^0(\R^+)$, $w(t) \le c t^{-d-\alpha-1}$ for a $c > 0$ and all $t \ge 1$, and $w(t) t^{-6} $ is bounded.
\item $Q \in C^3(\R^d)$ is supported on $\{|x| \le 1/2\}$, and $Q(x) = Q(-x)$.
\end{itemize}
Then $f \in \F_\alpha$.
\end{lemma}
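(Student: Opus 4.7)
The plan is to verify directly that $q(x,t)=\sqrt{w(t)}\,Q(x/t)$ meets each clause of Definition~\ref{d:deff}: (i) membership in $L^2_\text{sym}(\R^d\times\R^+)$, (ii) Assumption~\ref{a:main}, (iii) conic support, and (iv) the decay estimate \eqref{e:fdecay}. The cheap items come first. Symmetry $q(x,t)=q(-x,t)$ is inherited from $Q(x)=Q(-x)$; since $Q$ is supported on $\{|x|\le 1/2\}$, for $t>0$ the slice $q(\cdot,t)$ is supported on $\{|x|\le t/2\}$, so $q$ has conic support. The decay bound reduces to a one-line computation: for $t\ge 1$,
\[
\int_{\R^d\times[t,\infty)} q^2(x,s)\,dx\,ds \;=\; \|Q\|_{L^2}^2\int_t^\infty w(s)\,s^d\,ds \;\le\; c\|Q\|_{L^2}^2\int_t^\infty s^{-\alpha-1}\,ds \;=\; \tfrac{c}{\alpha}\|Q\|_{L^2}^2\, t^{-\alpha}.
\]
Square-integrability over all of $\R^d\times\R^+$ follows from the same bound at infinity and from continuity of $w$ on $\R^+$ (hence boundedness on $[0,1]$) near the origin.

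The substantive step is Assumption~\ref{a:main}. Differentiating gives $\partial_x^k q(x,t)=\sqrt{w(t)}\,t^{-|k|}(\partial^k Q)(x/t)$ for $t>0$, so each slice $q(\cdot,t)$ lies in $C^3(\R^d)$ since $Q\in C^3$; for $t=0$ I would simply set $q(\cdot,0)\equiv 0$ (a modification on a null set of $\R^d\times\R^+$, which is irrelevant to all integrals). The $L^2$ condition on $\partial^{(k,0)}q$ for $|k|\le 3$ then reduces to convergence of
\[
\|\partial^k Q\|_{L^2}^2 \int_{\R^+} w(t)\,t^{d-2|k|}\,dt,
\]
which holds at infinity via $w(t)\le ct^{-d-\alpha-1}$ and at zero via the hypothesis $w(t)\le Ct^6$ (the integrand is bounded by $Ct^{d-2|k|+6}$, integrable near $0$ for all $|k|\le 3$). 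Local boundedness of $\partial^{(k,0)}q$ reduces to boundedness of the scalar prefactor $\sqrt{w(t)}\,t^{-|k|}=\sqrt{w(t)\,t^{-2|k|}}$; this is exactly what ``$w(t)t^{-6}$ is bounded'' delivers for $|k|\le 3$, since the exponent $6=2\cdot 3$ is tuned to the three spatial derivatives demanded by the assumption. Non-degeneracy is immediate: since $w$ is continuous and not identically zero, pick any $t_0>0$ with $w(t_0)>0$; then $q(\cdot,t_0)\not\equiv 0$, and $L^1$-continuity of dilations applied to the compactly supported $Q$, combined with continuity of $w$ at $t_0$, yields $q(\cdot,t')\to q(\cdot,t_0)$ in $L^1$.

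The argument is essentially pure verification, and no genuine obstacle is expected. The only tight spot is the precise calibration between the three spatial derivatives required by Assumption~\ref{a:main} and the $t^{-6}$ growth allowed for $w$ near the origin: the exponents $2\cdot 3=6$ match with no slack, which is presumably why the hypothesis on $w$ in the lemma is stated at that specific order.
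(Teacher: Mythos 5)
Your proof is correct and takes essentially the same approach as the paper's: a direct change of variables to verify the decay bound \eqref{e:fdecay}, together with a straightforward check of the remaining clauses of Definition~\ref{d:deff}. The only difference is one of exposition: the paper dismisses conic support and Assumption~\ref{a:main} as ``immediate'' and ``clear'' respectively, whereas you carry out those verifications in full (including the correct observation that the exponent $6$ in the hypothesis on $w$ is exactly calibrated to the three spatial derivatives demanded by Assumption~\ref{a:main}).
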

\begin{proof}
It is clear that $q$ satisfies Assumption \ref{a:main}, and the conic support property is also immediate. To establish \eqref{e:fdecay}, by the change of variables $(x,s) \mapsto (tsx,ts)$ and the bound on~$w$, for all $t \ge 1$,
\begin{align*}
     \int_{\R^d \times [t,\infty)} w(s) Q^2( x /s)\, dx ds  &  =  t^{d+1} \int_{\R^d \times [1,\infty)}  w(ts) s^d \, ds \, \int_{\R^d}  Q^2(x) \, dx   \\
     & \le c_1  t^{-\alpha} \int_{[1,\infty)} s^{-\alpha-1} \, ds \le c_2 t^{-\alpha} 
 \end{align*}
for constants $c_1, c_2> 0$, as required.
\end{proof}
  
As a consequence, we can generate examples of fields $f \in \F_\alpha \cap \calS_\alpha$ for all $\alpha > 0$:
 
\begin{corollary}
 \label{c:sm}
Let $\kappa: \R^+ \to \R$ be regularly decaying with index $\alpha > 0$ such that $\kappa(x) \le c |x|^{-\alpha}$ for a constant $c > 0$ and all $x \ge 1$. Then there exists a smooth isotropic field $f \in \F_\alpha \cap \calS_\alpha$ whose covariance kernel satisfies $\E[f(0) f(x)] \sim \kappa(|x|)$.
\end{corollary}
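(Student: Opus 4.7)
The plan is to construct $f = q \star_1 W$ with $q(x,t) = \sqrt{w(t)}\,Q(x/t)$ by separately choosing a fixed isotropic kernel $Q$ and a weight $w$ matched to $\kappa$, then to verify membership in $\F_\alpha$ via Lemma~\ref{l:smf} and in $\calS_\alpha$ directly from Definition~\ref{d:s}, and finally to read off $K(R)\sim\kappa(R)$ from the first item of Proposition~\ref{p:smprop}.

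First I would fix the kernel. Let $\eta\in C^\infty(\R^+;[0,1])$ satisfy $\eta\equiv 1$ on $[0,1/100]$ and $\eta\equiv 0$ on $[1/2,\infty)$, rescaled once and for all so that $\|\eta\|_{C^3}\le 100^2 e^{-1/200}$, and set $Q(x)=\tfrac{1}{100}\eta(|x|)$. Then $Q$ is isotropic, $C^3$, supported in $\{|x|\le 1/2\}$, satisfies $Q(x)\ge\tfrac{1}{100}\id_{|x|\le 1/100}$, and obeys $\max_{|k|\le 3}|\partial^k Q(x)|\le 100\,e^{-|x|/100}$ (on $\{|x|\le 1/2\}$ by the $C^3$ bound on $\eta$, elsewhere trivially by compact support). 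Thus $Q$ simultaneously meets the $\calS_\alpha$-conditions \eqref{e:qbounds} and the compact-support requirement of Lemma~\ref{l:smf}. This choice fixes the constant $c_{\alpha,Q}\in(0,\infty)$ appearing in Proposition~\ref{p:smprop}.

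Next I would choose $T_0\ge 1$ large enough that $\kappa(t)>0$ for all $t\ge T_0$ (possible since $\kappa$ is eventually positive by definition of regular decay), and define
\[
w(t)=\begin{cases} c_{\alpha,Q}^{-1}\,\kappa(T_0)\,T_0^{-d-7}\,t^6, & 0\le t\le T_0,\\[2pt] c_{\alpha,Q}^{-1}\,\kappa(t)\,t^{-d-1}, & t\ge T_0.\end{cases}
\]
Then $w$ is continuous and nonnegative on $\R^+$, not identically zero; the map $t\mapsto w(t)t^{-6}$ is constant on $[0,T_0]$ and bounded by a multiple of $t^{-d-\alpha-7}$ at infinity using the hypothesis $\kappa(t)\le c t^{-\alpha}$, so globally bounded; and the same hypothesis yields $w(t)\le c'\,t^{-d-\alpha-1}$ for $t\ge 1$. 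Since $\kappa$ is regularly varying of index $-\alpha$ and $t^{-d-1}$ contributes index $-d-1$, $w$ is regularly decaying with index $\alpha+d+1$, as demanded by Definition~\ref{d:s}.

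With these choices the conditions of Definition~\ref{d:s} are satisfied term by term, giving $f\in\calS_\alpha$; and the conditions of Lemma~\ref{l:smf} are likewise satisfied, giving $f\in\F_\alpha$ (the a.s.\ $C^2$-smoothness needed in the hypothesis of that lemma follows from Assumption~\ref{a:main} applied to $q$, which holds because $Q\in C^3$ is compactly supported and $w$ decays polynomially while satisfying the $t^{-6}$ bound near the origin). Finally, applying the first item of Proposition~\ref{p:smprop} yields $K(R)\sim c_{\alpha,Q}\,w(R)\,R^{d+1}=\kappa(R)$ as $R\to\infty$, which is the desired asymptotic. No real obstacle appears in this argument; the only point to watch is the mutual compatibility of the compact-support condition (needed for $\F_\alpha$) with the pointwise lower bound and exponential derivative bound (needed for $\calS_\alpha$), which is comfortable on the small support $\{|x|\le 1/2\}$ where $e^{-|x|/100}$ is close to $1$.
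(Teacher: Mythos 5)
Your construction follows the paper's proof essentially verbatim: fix a compactly supported $C^3$ kernel $Q$ meeting both the support requirement of Lemma~\ref{l:smf} and the bounds \eqref{e:qbounds}, take $w(t)$ proportional to $\kappa(t)t^{-d-1}$ at infinity, and conclude via Lemma~\ref{l:smf}, Definition~\ref{d:s} and Proposition~\ref{p:smprop}; your only cosmetic difference is that you absorb the normalising constant $c_{\alpha,Q}^{-1}$ into $w$ rather than rescaling at the end, which is fine since $c_{\alpha,Q}$ depends only on $Q$.

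The one point where you fall short of the statement is smoothness of $f$. The corollary asserts a \emph{smooth} (i.e.\ $C^\infty$) field, and the paper secures this via Remark~\ref{r:smooth} by choosing $w(t)e^{1/t}\to 0$ as $t\to 0$, so that $\partial^{k,0}q$ is locally bounded and square-integrable for \emph{every} multi-index $k$. Your weight behaves like $t^{6}$ near the origin, so $\partial^{k,0}q(x,t)=\sqrt{w(t)}\,t^{-|k|}(\partial^kQ)(x/t)$ is only controlled for $|k|\le 3$, and Lemma~\ref{l:cont} then yields only $C^2$-smoothness (indeed the small-scale contribution to $K$ is only finitely differentiable). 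This is easily repaired by replacing $t^6$ on $[0,T_0]$ with a continuous non-negative function vanishing faster than any polynomial at $0$ (e.g.\ matched to $e^{-1/t}$), after which your argument is complete.
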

\begin{proof}
Let $w$ be a smooth non-negative bounded function such that $w(t) \sim  \kappa(t) t^{-d-1}$ as $t \to \infty$ and $w(t) e^{1/t} \to 0$ as $t \to 0$, let $Q$ be a smooth non-negative isotropic function supported on $\{|x| \le 1/2\}$, and define $f = q \star_1 W$, $q(x,t) = \sqrt{w(t)} Q(x/t)$. Then $f$ is smooth (see Remark \ref{r:smooth}), is in $\calS_\alpha$ by definition, is in $\F_\alpha$ by Lemma \ref{l:smf}, and by Proposition~\ref{p:smprop} satisfies $K(R) \sim  c_1 w(R) R^{d+1} \sim c_1 \kappa(|x|)$ for some constant $c_1  > 0$. Since multiplying $w$ by a constant scales $K$ by the same constant, we can fix $c_1= 1$ by scaling~$w$ appropriately.
\end{proof}

\subsection{Examples}
\label{s:exfrd}

We next present examples of fields in $\F$ and $\calS$, including those in Section~\ref{s:ex}. This verifies that Theorems \ref{t:ck1}--\ref{t:npc} are corollaries of our main results.

\subsubsection{The Cauchy covariance kernel}
Recall the smooth isotropic Gaussian field $F_\alpha$ with Cauchy covariance $K_\alpha$ defined in \eqref{e:cauchy}. We consider two scale-mixture decompositions of~$F_\alpha$: 
\begin{enumerate}
\item With kernel $Q(x) = \id_{|x| \le 1}$, i.e.\ the indicator of the ball in~$\R^d$, which verifies that a sufficiently coarse discretisation of $F_\alpha$ is in $\F_\alpha$.
\item With $Q(x) = e^{-|x|^2/2}$, i.e.\ the (scaled) heat-kernel, which shows that $F_\alpha \in \calS_\alpha$.
\end{enumerate} 

\smallskip
\noindent \textit{(1) Decomposition over the indicator of the ball.} Define $\chi_t(x) = \id_{|x| \le t}$. It turns out that one can decompose arbitrary smooth isotropic functions as a scale mixture of $\chi_t \star \chi_t$:

\begin{proposition}[{\cite[Theorem 1, Remark 1]{hs02}}]
\label{p:decomp}
For $d \ge 2$, let $K$ be an isotropic function on $\R^d$ that is $(d+1)$-times differentiable away from $0$, and suppose that, for all $k \le d+1$, $K^{(k)} (s) s^{|k|} \to 0$ as $s \to \infty$, where $K^{(k)}$ denotes the $k^\textrm{th}$ derivative in the first coordinate direction. Then
\[ K(x) =  \int_0^\infty w(t) (\chi_{t/2} \star \chi_{t/2})(x) dt , \]
where the weight $w$ is given by
\begin{equation}
\label{e:weight}
 w(t) = c_d  t^{1-d} \int_t^\infty  (-1)^{d+1}  K^{(d+1)}(s) s^{d-2} (1 - (t/s)^2)^{(d-3)/2}   \, ds ,
 \end{equation}
for $c_d > 0$ an explicit constant depending only on the dimension.
\end{proposition}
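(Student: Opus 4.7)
The plan is to reduce the stated decomposition to a classical Abel-type integral equation and invert it via fractional calculus. Since $\chi_{t/2}$ is radial, so is $\Phi(\cdot,t) := \chi_{t/2} \star \chi_{t/2}$; a standard slicing argument (integrating over hyperplanes perpendicular to the segment joining the two ball centres) gives $\Phi(r,t) = 0$ for $r \ge t$ and
\[ \Phi(r,t) = a_d \int_r^t (t^2-u^2)^{(d-1)/2}\, du \qquad (r < t), \]
for an explicit constant $a_d > 0$. Substituting into $K(r) = \int_0^\infty w(t)\Phi(r,t)\, dt$ and differentiating once in $r$ (the boundary term at $t=r$ vanishes because $\Phi(r,r) = 0$) yields the Abel-type equation
\[ -K'(r) = a_d \int_r^\infty w(t)(t^2 - r^2)^{(d-1)/2}\, dt. \]

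The next step is to invert this equation for $w$. Under the substitution $u = r^2$, $\tau = t^2$ it becomes a standard Riemann--Liouville fractional integral equation of order $(d+1)/2$ on $\R^+$, whose inverse is a fractional derivative of the same order. This decomposes as $\lceil (d+1)/2 \rceil$ ordinary derivatives composed with a fractional integral of residual order $\lceil (d+1)/2 \rceil - (d+1)/2 \in \{0,1/2\}$, the value depending on the parity of $d$. Returning to the $(r,t)$-variables and integrating by parts to move all derivatives onto $K$ produces exactly the weight \eqref{e:weight} up to sign and normalisation. The decay condition $K^{(k)}(s) s^{|k|} \to 0$ is precisely what forces each boundary term in the successive integrations by parts to vanish at infinity, and the $(d+1)$-fold differentiability is the regularity consumed by the inversion.

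The main obstacle will be the even-$d$ case, where the Abel inversion genuinely requires a half-integer-order derivative: the factor $(1-(t/s)^2)^{(d-3)/2}$ appearing in \eqref{e:weight} is precisely the Abel kernel arising in this residual fractional step, so while conceptually standard it must be handled with care. A secondary nuisance is the bookkeeping of the constant $c_d$, which is most easily pinned down by testing the identity on a case where both sides admit a closed form (e.g.\ specialising $K(x) = (\chi_{t_0/2}\star\chi_{t_0/2})(x)$, which should force $w$ to be the Dirac mass at $t_0$ and fixes the normalisation). A Fourier/Hankel alternative -- using that both $K$ and $\chi_{t/2}\star\chi_{t/2}$ have explicit radial Fourier transforms in terms of Bessel functions -- recasts the identity as a Bessel integral identity, but this route is equivalent to the Abel inversion above and offers no real saving.
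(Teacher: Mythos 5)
The paper does not prove this proposition at all: it is quoted verbatim from \cite[Theorem~1, Remark~1]{hs02} as an external input, so there is no ``paper's approach'' to compare against. Your reconstruction is essentially the standard proof of such M\'atheron/Gneiting-type ``Euclid's hat'' decompositions, and the outline is sound: the lens-volume formula $(\chi_{t/2}\star\chi_{t/2})(x)=a_d\int_{|x|}^t(t^2-u^2)^{(d-1)/2}\,du$ is correct, one differentiation in $r$ does reduce the identity to a Weyl fractional integral equation of order $(d+1)/2$ in the variable $r^2$, and its inversion (integer derivatives plus a residual Abel step when $d$ is even) together with integrations by parts -- whose boundary terms at infinity are killed exactly by the hypothesis $K^{(k)}(s)s^{k}\to 0$ -- produces a weight of the form \eqref{e:weight}. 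Two caveats are worth making explicit. First, as written your argument runs in the ``uniqueness'' direction: it assumes the representation $K=\int w(t)(\chi_{t/2}\star\chi_{t/2})\,dt$ holds and derives what $w$ must be. The proposition asserts existence, so you must either run the computation in reverse (define $w$ by \eqref{e:weight}, substitute, exchange the order of integration via Fubini, evaluate the resulting Beta-type inner integral, and undo the $d+1$ integrations by parts to recover $K$), or justify that the Abel/Weyl inversion is a genuine two-sided equivalence under the stated regularity and decay; the hypotheses are exactly what make the relevant Fubini and differentiation-under-the-integral steps legitimate, but this needs to be said. Second, your proposed normalisation test $K=\chi_{t_0/2}\star\chi_{t_0/2}$ does not satisfy the hypotheses ($\chi\star\chi$ is not $(d+1)$-times differentiable away from $0$); a Gaussian, for which both sides are explicitly computable, is a cleaner choice for pinning down $c_d$. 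Neither point is a fatal gap, but both must be addressed to turn the sketch into a proof.
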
 

Now suppose $f$ is a smooth isotropic Gaussian field on $\R^d$ whose covariance kernel satisfies the assumptions of Proposition \ref{p:decomp}. If the associated weight $w$ is non-negative, $f$ has a scale-mixture finite-range decomposition
\[ f \stackrel{d}{=} q \star_1 W \ , \quad q(x,t) = \sqrt{w(t)} \chi_{t/2}(x) . \]
If $f$ is real-analytic, and so lacks a finite-range decomposition, the weight $w$ cannot be non-negative, although it may be \textit{eventually} non-negative. This is the case for the Cauchy kernel:

\begin{proposition}
\label{p:ckdecomp}
For every $d \ge 2$ and $\alpha > 0$, $K_\alpha$ satisfies the assumptions of Proposition~\ref{p:decomp}. Moreover, the associated weight $w = w_\alpha$ is eventually non-negative, and there is a $c > 0$ such that, for all $t \ge 1$,
\begin{equation}
\label{e:ckdecomp}
 w_\alpha(t) \le c t^{-\alpha - d -1}   . 
 \end{equation}
\end{proposition}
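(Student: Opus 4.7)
The plan is to exploit the explicit rational structure of $K_\alpha(r) = (1+r^2)^{-\alpha/2}$, reducing everything to the asymptotic behaviour of $K_\alpha^{(k)}(s)$ as $s \to \infty$. The function $K_\alpha$ is real-analytic on $\R$, hence certainly $C^{d+1}$ there. By a straightforward induction I would show
\[ K_\alpha^{(k)}(s) = P_k(s) \cdot (1+s^2)^{-\alpha/2 - k} , \quad k \ge 0 , \]
where $P_k$ is a polynomial of degree $k$ satisfying the recursion $P_{k+1}(s) = (1+s^2) P_k'(s) - (\alpha + 2k) s P_k(s)$. In particular $|K_\alpha^{(k)}(s)| \le c_k s^{-\alpha-k}$ for $s \ge 1$, which immediately verifies the decay hypothesis $K_\alpha^{(k)}(s) s^{|k|} \to 0$ required in Proposition~\ref{p:decomp}.

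Reading off the leading coefficient $a_k$ of $P_k$ from the recursion gives $a_{k+1} = -(\alpha+k) a_k$, which together with $a_0=1$ yields $a_{d+1} = (-1)^{d+1} \alpha(\alpha+1)\cdots(\alpha+d)$. Consequently
\[ (-1)^{d+1} K_\alpha^{(d+1)}(s) \sim \alpha(\alpha+1)\cdots(\alpha+d) \, s^{-\alpha-d-1}  \quad \text{as } s \to \infty , \]
which is \emph{strictly positive} in the limit since $\alpha > 0$. So there is $s_0$ such that $(-1)^{d+1} K_\alpha^{(d+1)}(s) > 0$ for all $s \ge s_0$. For $t \ge s_0$ the integrand defining $w_\alpha(t)$ in \eqref{e:weight} is then pointwise non-negative on $[t,\infty)$ (the remaining factors $s^{d-2}$ and $(1-(t/s)^2)^{(d-3)/2}$ are clearly so), giving the eventual non-negativity of $w_\alpha$.

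For the decay bound, substituting the uniform estimate $|K_\alpha^{(d+1)}(s)| \le C s^{-\alpha-d-1}$ (valid on $s \ge 1$) into \eqref{e:weight} and changing variables $u = t/s$ gives, for $t \ge 1$,
\[ |w_\alpha(t)| \le C' t^{1-d} \int_t^\infty s^{-\alpha-3} (1-(t/s)^2)^{(d-3)/2} ds = C' t^{-\alpha-d-1} \int_0^1 u^{\alpha+1} (1-u^2)^{(d-3)/2} du , \]
and the $u$-integral is finite for every $d \ge 2$ (the only delicate case being $d=2$, where $(1-u^2)^{-1/2}$ has an integrable singularity at $u=1$). The only genuinely nontrivial aspect is the sign analysis: I do \emph{not} expect $w_\alpha$ to be non-negative everywhere, and indeed it cannot be, since $F_\alpha$ is real-analytic and hence lacks any bona fide finite-range decomposition. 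The restriction to large $t$ is therefore essential, and the whole argument hinges on the product $\alpha(\alpha+1)\cdots(\alpha+d)$ emerging from the recursion with the correct positive sign.
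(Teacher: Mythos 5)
Your proposal is correct, and it follows the same overall blueprint as the paper (both the sign and the decay of $w_\alpha$ are read off from the behaviour of $K_\alpha^{(d+1)}$ inserted into \eqref{e:weight}), but the two technical steps are executed differently. For the derivative asymptotics, the paper compares $(1+x)^{-\alpha/2}$ with $x^{-\alpha/2}$ and invokes Fa\`a di Bruno to conclude $K_\alpha^{(k)}(x)\sim(-1)^k\prod_{i=0}^{k-1}(\alpha+i)\,x^{-\alpha-k}$, whereas your recursion $P_{k+1}=(1+s^2)P_k'-(\alpha+2k)sP_k$ with $a_{k+1}=-(\alpha+k)a_k$ produces the same leading constant by elementary induction (note that $\alpha+k>0$ guarantees $\deg P_k=k$ exactly, so the asymptotic sign is genuinely determined); your route is more self-contained and additionally yields the \emph{uniform} bound $|K_\alpha^{(k)}(s)|\le c_ks^{-\alpha-k}$ on $s\ge 1$ rather than only an asymptotic. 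For the decay of $w_\alpha$, the paper splits into $d\ge 3$ (bounding $(1-(t/s)^2)^{(d-3)/2}\le 1$ and integrating by parts) and $d=2$ (splitting the integral at $2t$), while your substitution $u=t/s$ handles all $d\ge 2$ at once, reducing everything to the convergent Beta-type integral $\int_0^1 u^{\alpha+1}(1-u^2)^{(d-3)/2}\,du$, with the $d=2$ singularity at $u=1$ being integrable; this is cleaner and arguably more robust than the paper's case analysis. Your observation that only the sign of $K_\alpha^{(d+1)}$ (not of the lower derivatives) enters the non-negativity of the integrand in \eqref{e:weight} is also accurate, and your closing remark that $w_\alpha$ cannot be globally non-negative is consistent with the paper's discussion of real-analytic fields lacking finite-range decompositions.
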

\begin{proof}
We first claim that, for all $k \in \N_0$, as $x \to \infty$,
\begin{equation}
\label{e:cauchyregvar}
 K_\alpha^{(k)}(x) \sim (-1)^k \prod_{i=0}^{k-1} (\alpha+i) x^{-\alpha-k} .
 \end{equation}
 To verify this, define functions
 \[  f_1(x) = (1+x)^{-\alpha/2}   \quad \text{and} \quad f_2(x) = x^{-\alpha/2} ,  \]
 and observe that, for all $k \in \N_0$, as $x \to \infty$, $ \partial^k f_1(x)  \sim \partial^k f_2(x)$. Then applying the chain rule for higher derivatives (i.e.\ Fa\`{a} di Bruno's formula) to $f_i$ and $g(x) = x^2$, we see that, as $x \to \infty$, 
 \[ K_\alpha^{(k)}(x)  = \partial^k f_1(x^2)   \sim \partial^k f_2(x^2) = \partial^k x^{-\alpha}, \]
  from which \eqref{e:cauchyregvar} follows.

In light of \eqref{e:cauchyregvar}, $K_\alpha$ satisfies the assumptions of Proposition \ref{p:decomp}. Moreover, there exists a $t_0 > 0$ such that $(-1)^{k}K_\alpha^{(k)}(t) \ge 0$ for all $k \le d+1$ and $t \ge t_0$. Considering \eqref{e:weight}, this shows that $w_\alpha(t) \ge 0$ for $t \ge t_0$. To prove \eqref{e:ckdecomp} we separate into the cases $d \ge 3$ and $d =2$. In the former case, since $(1 - (t/s)^2)^{(d-3)/2}  \in [0,1]$ for all $s \ge t \ge 0$, integrating \eqref{e:weight} by parts gives that
\begin{equation}
\label{e:walpha1}
   w_\alpha(t) \le c_d t^{1-d} \int_t^\infty  (-1)^{d+1}  K_\alpha^{(d+1)}(s) s^{d-2}   \, ds   \le  c_d  (-1)^d   t^{-1}    K_\alpha^{(d)}(t)  
   \end{equation}
   for all $t \ge t_0$. In the case $d=2$ we split the integral in \eqref{e:weight} at $2t$. Then since $(-1) K_\alpha^{(3)}(s)$ is non-increasing on $s \ge t \ge t_0$, and using the change of variables $s \mapsto ts$, we have
\begin{equation}
\label{e:walpha2}
 w_\alpha(t)   \le  c_d  t^{-1} \Big(    -   K^{(3)}(t)  t \int_1^{2}  (1 - s^{-2})^{-1/2}   \, ds +   (3/4)^{-1/2}   K^{(2)}(2t)   \, ds \Big)  
    \end{equation}
for all $t \ge t_0$. Combining the bounds \eqref{e:walpha1}--\eqref{e:walpha2} with \eqref{e:cauchyregvar} gives \eqref{e:ckdecomp}.
\end{proof}

\smallskip
\noindent As a consequence of Proposition~\ref{p:ckdecomp}, we show that a sufficiently coarse discretisation of $F_\alpha$ is in class~$\F_\alpha$:

\begin{proposition}
\label{p:cauchyf}
For every $\alpha > 0$ there exists a $c_0 = c_0(\alpha) > 0$ such that for every $c \ge c_0$ the discrete Gaussian field $F_\alpha(c x)|_{\Z^d}$ is in class $\F_\alpha$.
\end{proposition}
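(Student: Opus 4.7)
The plan is to construct an explicit $q \in L^2_\text{sym}(\R^d \times \R^+)$ satisfying the conditions of Definition \ref{d:deff} such that $(q \star_1 W)|_{\Z^d} \equlaw F_\alpha(c \cdot)|_{\Z^d}$. The starting point is Proposition \ref{p:ckdecomp}, which produces a scale-mixture representation of $K_\alpha$ with weight $w_\alpha$ that is non-negative on $[t_0, \infty)$ and bounded by $C t^{-\alpha - d - 1}$ for $t \ge 1$. Rescaling by the change of variable $s = t/c$ gives
\[
K_\alpha(cx) = \int_0^\infty \tilde w(s) (\chi_{s/2} \star \chi_{s/2})(x) \, ds, \qquad \tilde w(s) := c^{d+1} w_\alpha(cs) ,
\]
and in particular $\tilde w \ge 0$ on $[1, \infty)$ whenever $c \ge t_0$.

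The crucial observation is that $\chi_{s/2} \star \chi_{s/2}$ is supported on $\{|x| \le s\}$, so for any $x \in \Z^d \setminus \{0\}$ the integrand above vanishes for $s < 1$. Hence truncating the representation to $s \ge 1$ leaves the covariance unchanged at every non-zero lattice vector, and using the decay of $\tilde w$,
\[
\hat K(0) \;:=\; \int_1^\infty \tilde w(s) (\chi_{s/2} \star \chi_{s/2})(0) \, ds \;\le\; C'(\alpha)\, c^{-\alpha} ,
\]
which is $\le 1 = K_\alpha(0)$ as soon as $c \ge c_0(\alpha)$ is sufficiently large. Write $\delta := 1 - \hat K(0) \in [0,1]$ for the remaining variance defect at the origin.

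To absorb this defect, I add an independent small-scale block producing i.i.d.\ noise of variance $\delta$ on the lattice. Fix a symmetric function $\phi \in L^2(\R^d)$ supported in $\{|x| \le 1/4\}$ with $\|\phi\|_{L^2}^2 = 1$, and set
\[
q_1(x,t) := \sqrt{\tilde w(t)} \, \chi_{t/2}(x) \, \id_{t \ge 1}, \qquad q_2(x,t) := \sqrt{2\delta}\, \phi(x)\, \id_{t \in [1/2, 1)}, \qquad q := q_1 + q_2 .
\]
Since $q_1$ and $q_2$ have disjoint supports in the $t$-variable, the Gaussian fields $q_1 \star_1 W$ and $q_2 \star_1 W$ are independent. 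The first has covariance $\hat K$ by construction, while the second, restricted to $\Z^d$, is i.i.d.\ with variance $\delta$ because integer translates of $\phi$ have pairwise disjoint supports. Summing, $(q \star_1 W)|_{\Z^d}$ has covariance $K_\alpha(c|i - j|)$, as required.

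It remains to check that $q$ meets the requirements of Definition \ref{d:deff}: the $x$-symmetry is inherited from $\chi_{t/2}$ and (the symmetric) $\phi$; conic support holds since $|x| \le t/2$ on $\text{Supp}(\chi_{t/2})$ and $|x| \le 1/4 \le t/2$ on $\text{Supp}(q_2)$; the non-degeneracy clause of Assumption \ref{a:main} is verified at any $t^\ast \in (1/2,1)$; and for $t \ge 1$ the decay condition \eqref{e:fdecay} reduces to $\int_t^\infty \tilde w(s)\, s^d \, ds \le C''(\alpha)\, c^{-\alpha}\, t^{-\alpha}$ by the bound on $\tilde w$. The essential step is the truncation argument: the fact that $F_\alpha$ is real-analytic (and hence admits no genuine finite-range decomposition) is reflected in the negativity of $w_\alpha$ at small scales, but coarsening the lattice by a factor $c \ge c_0(\alpha)$ pushes this negativity onto scales below the lattice resolution, where it influences only the on-site variance and can be absorbed by an independent small-scale noise.
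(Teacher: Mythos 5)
Your proposal is correct and follows essentially the same route as the paper: both start from the ball-indicator decomposition of $K_\alpha$ (Propositions \ref{p:decomp} and \ref{p:ckdecomp}), exploit that $w_\alpha$ is eventually non-negative with the decay $w_\alpha(t)\le ct^{-\alpha-d-1}$, and use the key observation that after coarsening by $c\ge c_0(\alpha)$ the scales carrying the negative part of the weight lie below the lattice resolution and therefore only affect the on-site variance. The one genuine difference is how that sub-lattice part is handled: the paper retains its (aggregate) mass and repackages it as a low-scale block of the same indicator kernels, which requires arranging $\int_0^t w_\alpha(s)\|\chi_{s/2}\|_{L^2}^2\,ds\ge 0$ for $t\ge t_0$, whereas you discard it and compensate the resulting variance deficit $\delta$ at the origin with an independent unit-scale block built from a bump $\phi$ supported in $\{|x|\le 1/4\}$, which instead requires $\hat K(0)\le 1$ — immediate from the decay bound for $c$ large. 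Both are equally valid; yours trades one elementary verification for another. Only cosmetic point: to run the non-degeneracy check at $t^\ast\in(1/2,1)$ you need $\delta>0$, not merely $\delta\ge 0$, but this is automatic since $\hat K(0)\le C'(\alpha)c^{-\alpha}<1$ once $c_0$ is chosen large enough (and in the degenerate case $\delta=0$ one could anyway verify non-degeneracy at a scale $t\ge1$ where $\tilde w>0$).
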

\begin{proof}
Recall the decomposition 
\[ K_\alpha(x) =  \int_0^\infty w_\alpha(t) (\chi_{t/2} \star \chi_{t/2})(x) dt , \]
guaranteed to exist by Propositions \ref{p:decomp} and \ref{p:ckdecomp}. By the conclusion of Proposition \ref{p:ckdecomp}, and since 
\[  \int_0^\infty w_\alpha(t) \| \chi_{t/2} \|_{L^2}^2  \, dt = K_\alpha(0) > 0    , \]
we may choose $t_0 = t_0(\alpha) > 0$ and $c_1 > 0$ such that, for all $t \ge t_0$,
\[  0 \le w_\alpha(t)  t^{\alpha + d + 1} \le c_1 \quad \text{and} \quad \int_0^t w_\alpha(s) \| \chi_{s/2} \|_{L^2}^2  ds  \ge 0 .\]
Now fix $c \ge c_0 := 2t_0$. We claim that $F_\alpha(cx)|_{\Z^d}$ can be represented as $\tilde{f} = (q \star_1 Z)|_{\Z^d}$, where
\begin{align*}
& q(x,t) = \sqrt{c} \times \chi_{ct/2}(x) \\
& \qquad \qquad \qquad \times \begin{cases}
 \Big(  \int_0^{1/2} \| \chi_{cs/2} \|_{L^2}^2 ds \Big)^{-1/2} \Big(  \int_0^{1/2} w_\alpha(cs) \| \chi_{cs/2} \|_{L^2}^2  ds \Big)^{1/2}   &  t < 1/2 , \\
 \sqrt{w_\alpha(ct)}  & t \ge 1/2 .
\end{cases} \end{align*}
Indeed $ \var[ \tilde{f}(0) ] $ is equal to
\[  \int_{\R^d \times \R^+} \! \! q^2(x,t) \, dx dt = c \int_0^\infty w_\alpha(cs) \| \chi_{cs/2}\|_{L^2}^2 \, ds    =   \int_0^\infty w_\alpha(s) \| \chi_{s/2}\|_{L^2}^2 \, ds  = \var[F_\alpha(0)] . \]
 Moreover, for every $i \in \Z^d \setminus \{0\}$, by the change of variables $t \mapsto t/c$,
 \begin{align*}
 \textrm{Cov}[ \tilde{f}(0)\tilde{f}(i) ] &  = c \int_{1/2}^\infty w_\alpha(ct) (\chi_{ct/2} \star \chi_{ct/2})(i) =   \int_{c/2}^\infty w_\alpha(t) (\chi_{t/2} \star \chi_{t/2})(i)  \\
 & = \textrm{Cov}[ F_\alpha(0) F_\alpha(ci) ] , 
 \end{align*}
 which proves the claim. Finally, observe that $q$ is supported on $\{(x,t) : |x| \le t/2 \}$, and
 \[ \int_{\R^d \times [t,\infty)} q^2(x,s) \, dx ds =  \int_t^\infty  w_\alpha(cs)   \| \chi_{s/2}\|_{L^2}^2 \, ds  \le c_2 t^{-\alpha }  \]
for all $t \ge 1$ and some $c_2 > 0$, where we used that $w_\alpha(cs) \le c_3 s^{-\alpha-d-1}$ and $\|\chi_{s/2}\|_{L^2}^2 \le c_4 s^d $ for some $c_3,c_4 > 0$. We conclude that $\tilde{f} \in \F_\alpha$, completing the proof.
\end{proof}

\begin{proof}[Proof of Theorem \ref{t:ck1}]
Combine Proposition \ref{p:cauchyf} with Theorem \ref{t:main1}.
\end{proof}

\smallskip
\noindent \textit{(2) Heat-kernel decomposition.} It is well-known that $r \mapsto (1+r)^{-\alpha/2}$ is the Laplace transform of the function
\[ v(s) = \Gamma(\alpha/2)^{-1} s^{\alpha/2-1} e^{-s},  \ s > 0 ,\]
and so the Cauchy kernel $K_\alpha$ can be decomposed as a scale-mixture of Gaussians 
\begin{align*}
 K_\alpha(x) & = \int_0^\infty v(s) e^{-s|x|^2} \, ds   =   \int_0^\infty 2^{-1} t^{-3} v(1/(4t^2)) e^{-|x|^2/(4t^2)} \, dt \\
 & = (2 \Gamma(\alpha/2))^{-1} \int_0^\infty t^{-\alpha-1} e^{-1/(4t^2)} e^{-|x|^2/(4t^2)} \, dt , 
 \end{align*}
 where we used the change of variables $s \mapsto 1/t^2$. As a consequence (see \cite[Proposition A.4]{ms22}), $F_\alpha$ can be represented as $q_\alpha \star_1 W$, for
\begin{equation}
\label{e:qalpha}
q_\alpha(x,t) = \sqrt{w_\alpha(t)} Q(|x| / t) , 
\end{equation}
where $w_\alpha(t) =  c_{d,\alpha}  t^{-\alpha-d-1} e^{-1/(4t^2)} $, $Q(x) = e^{-|x|^2/2}$, and $c_{d,\alpha} > 0$ is an explicit constant; this is analogous to the `heat-kernel decomposition' of the continuum GFF \cite[Section 4.3]{ddg22}. 

\smallskip
Using this decomposition it is straightforward to complete the proof of Theorem \ref{t:ck2}:

\begin{proof}[Proof of Theorem \ref{t:ck2}]
Eq.\ \eqref{e:qalpha} shows that $F_\alpha \in \calS_\alpha$, so this follows from Theorem \ref{t:main3}.
\end{proof}

\subsubsection{Proof of Theorem \ref{t:smooth}}

This is immediate from the properties of $\mathcal{F}$ and $\mathcal{S}$ developed the previous section. More precisely, we combine Corollary \ref{c:sm} with Theorems \ref{t:main1} and \ref{t:main2}--\ref{t:main3}.

\subsubsection{Fields with oscillating correlations}

Finally we verify the existence of fields in the class $\mathcal{F}_\alpha$ whose correlations oscillate in the sense of \eqref{e:oc}:
 
\begin{proposition}
\label{p:npc}
For every $\alpha > 0$ there exists a Gaussian field $f \in \mathcal{F}_\alpha$ with correlations oscillating on the scale $|x|^{-\alpha}$.
\end{proposition}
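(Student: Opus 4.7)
I plan to construct $f \in \F_\alpha$ in the scale-mixture finite-range form $f = q \star_1 W$ with $q(x,t) = \sqrt{w(t)}\, Q(x/t)$, but with $Q$ allowed to take both signs so that the covariance $K$ has direction-dependent leading behaviour which flips sign. First I would fix a smooth non-negative weight $w:\R^+\to\R^+$ vanishing on $[0,1/2]$, bounded, and satisfying $w(t) \sim c_0 t^{-d-\alpha-1}$ as $t\to\infty$ (the requirements in Corollary~\ref{c:sm} force such a $w$ into $\F_\alpha$). The key step is then to choose a symmetric $Q \in C^3(\R^d)$ supported in $B(0,1/2)$, not necessarily non-negative, such that the ``directional Mellin integral''
\[
  \tilde C(\hat x) := \int_0^1 r^{\alpha-1} (Q\star Q)(r \hat x)\, dr, \qquad \hat x \in S^{d-1},
\]
takes both positive and negative values as $\hat x$ varies.

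The construction of such a $Q$ exploits the fact that, while $Q \star Q$ has non-negative Fourier transform $|\hat Q|^2$, it may take negative spatial values whenever $Q$ changes sign. A concrete template is
\[
  Q(x) = \phi(x) - \tfrac{1}{2}\bigl[\phi(x - b e_1) + \phi(x + b e_1)\bigr],
\]
where $\phi$ is a small non-negative symmetric $C^3$ bump supported in $B(0,1/20)$ and $b \in (1/10, 1/4)$. Expanding $Q \star Q$ gives a positive bump near $0$ of amplitude $\tfrac{3}{2}(\phi\star\phi)$, negative bumps near $\pm b e_1$ of amplitude $-(\phi\star\phi)$, and positive bumps near $\pm 2b e_1$ of amplitude $\tfrac{1}{4}(\phi\star\phi)$, while along directions perpendicular to $e_1$ only the central positive bump survives. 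Taking $\phi$ sufficiently concentrated so that the contribution to $\tilde C(e_1)$ near $r=0$ is small compared to those at $r \approx b$ and $r \approx 2b$, one arrives at $\tilde C(e_1) \approx -(\phi\star\phi)(0)\bigl(b^{\alpha-1} - 2^{\alpha-3} b^{\alpha-1}\bigr)$, which is negative for $\alpha \in (0,3)$, while $\tilde C(e_2) > 0$. For $\alpha \ge 3$ one introduces further symmetric pairs of shifted bumps (analogous to higher-order finite-difference operators) to produce the required higher-order cancellation, or alternatively perturbs from a reference $Q_0 \ge 0$ by an appropriate signed symmetric $Q_1$: since the map $Q_1 \mapsto \bigl(\partial_\lambda|_{\lambda=0}\tilde C_{Q_0+\lambda Q_1}(e_1),\ \partial_\lambda|_{\lambda=0}\tilde C_{Q_0+\lambda Q_1}(e_2)\bigr)$ is linear on an infinite-dimensional space, generic choices of $Q_1$ drive $\tilde C$ to opposite signs in the two directions.

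With $w$ and $Q$ fixed, I would verify $f \in \F_\alpha$ directly from Definition~\ref{d:deff}: Assumption~\ref{a:main} follows from smoothness and the finiteness of $\int w(t) t^{d-2|k|}\|\partial^k Q\|_{L^2}^2\, dt$ for $|k| \le 3$; conic support is immediate from $\mathrm{supp}(Q) \subseteq B(0,1/2)$; and \eqref{e:fdecay} reduces to
\[
  \int_{\R^d \times [t,\infty)} q^2(x,s)\, dx\, ds = \|Q\|_{L^2}^2 \int_t^\infty w(s) s^d\, ds \le c_1 t^{-\alpha}.
\]
Finally, since $\mathrm{supp}(Q\star Q) \subseteq B(0,1)$, the substitutions $t = |x| s$ and $r = 1/s$ in $K(x) = \int_{|x|}^\infty w(t) t^d (Q\star Q)(x/t)\, dt$, together with Potter-type bounds on $w$ and dominated convergence exactly as in the proof of Proposition~\ref{p:smprop}, yield
\[
  |x|^\alpha K(x) \longrightarrow c_0\, \tilde C(x/|x|) \qquad \text{as } |x| \to \infty,
\]
and choosing directions with $\tilde C$ of opposite signs produces the oscillation required by \eqref{e:oc}. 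The principal technical obstacle is exhibiting $Q$ with $\tilde C$ genuinely changing sign uniformly across all $\alpha > 0$; the remaining verifications are routine adaptations of the framework developed in Section~\ref{s:class}.
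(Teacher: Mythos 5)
Your overall scheme---build $f=q\star_1 W$ with $q(x,t)=\sqrt{w(t)}\,Q(x/t)$, $Q$ signed and compactly supported, and read off $|x|^\alpha K(x)\to c_0\,\tilde C(x/|x|)$---is sound as far as the asymptotics and the verification of membership in $\F_\alpha$ are concerned, and it would give a directional (rather than radial) oscillation, which also satisfies \eqref{e:oc}. But the step you yourself flag as the principal obstacle is exactly where the argument breaks, and your concrete template does not cover all $\alpha>0$. The difficulty is structural: $(Q\star Q)(0)=\|Q\|_{L^2}^2>0$ always, so $Q\star Q$ is strictly positive in a neighbourhood of the origin, and for $\alpha\le 1$ the weight $r^{\alpha-1}$ in $\tilde C(\hat x)=\int_0^1 r^{\alpha-1}(Q\star Q)(r\hat x)\,dr$ puts its mass precisely there. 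Quantitatively, with $\phi$ supported in $B(0,\delta)$ the central bump contributes at least $\tfrac34(2\delta)^{\alpha-1}\int_{\R}(\phi\star\phi)(se_1)\,ds$ while the negative bumps contribute at most $(b-2\delta)^{\alpha-1}\int_{\R}(\phi\star\phi)(se_1)\,ds$ in magnitude; for $\alpha<1$ the former \emph{diverges} as $\delta\to0$ while the latter stays bounded, so concentrating $\phi$ makes things worse, not better, and $\tilde C(e_1)>0$. (At $\alpha=1$ the idealised computation gives exactly $0$ since the convolution coefficients $\tfrac32-1-1+\tfrac14+\tfrac14$ sum to zero.) More generally, $\alpha\,\tilde C(\hat x)\to(Q\star Q)(0)>0$ as $\alpha\to0$ for any fixed $Q$, so no fixed kernel can work for all small $\alpha$, and escaping this forces $Q$ to be exponentially concentrated in $\alpha^{-1}$ --- none of which is addressed. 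The claimed negativity ``for $\alpha\in(0,3)$'' is therefore false on $(0,1]$, and the proposed fixes for $\alpha\ge3$ (higher-order differences, or a perturbation argument whose first-order term must beat both $\tilde C_{Q_0}(e_1)>0$ and the quadratic correction) are sketches, not proofs.

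The paper's construction sidesteps this competition entirely by abandoning the continuous scale mixture in favour of a \emph{lacunary} sum $q(x,t)=\sum_n\sqrt{w_n}\,\tilde q(x/t_n)\id_{t\in[t_n-1,t_n]}$ with $t_n=k^n$ and $k=k(\alpha)$ large. At the test points $x=t_ne_1/k_i$ the compact support of $\widetilde K=\tilde q\star\tilde q$ kills every scale $m<n$, the single term $w_n\widetilde K(e_1/k_i)=\pm\eps w_n$ dominates, and the tail $\sum_{m>n}w_m<(\eps/2)w_n$ is controlled by the geometric condition $k^\alpha(1-k^{-\alpha})>2/\eps$. The sign of $K$ is then read off from the pointwise value of $\widetilde K$ at a fixed argument rather than from an integral against the singular weight $r^{\alpha-1}$, which is why it works uniformly down to $\alpha\to0$. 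If you want to salvage your approach, you would need either to let $Q$ degenerate with $\alpha$ in a controlled way, or to replace the continuous mixture by a sparse one --- at which point you have essentially rediscovered the paper's proof.
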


\begin{proof}
We first claim that there exist $\eps \in (0,1)$, $k_1,k_2 \ge 2$, and a smooth function $\tilde{q} : \R^d \to \R$ supported on the ball $\{|x| \le 1/2\}$, such that $\tilde{q}(x) = \tilde{q}(-x)$, and such that $\widetilde{K} = \tilde{q} \star \tilde{q}$ satisfies $\widetilde{K}(e_1/k_1) = - \eps$, $\widetilde{K}(e_1/k_2) = \eps$, and $\| \widetilde{K} \|_\infty = 1$, where $e_1$ is the unit vector in the first basis direction. Indeed one can first consider the piece-wise function
\[     \hat{q}(x) = \id_{[-1,1]^d}(x) - \id_{ 2 e_1 + [-1,1]^d}(x) - \id_{-2 e_1 + [-1,1]^d}(x), \]
for which $(\hat{q} \star \hat{q})(0) = 3 \times 2^d$ and $(\hat{q} \star \hat{q})(2e_1) = - 2 \times 2^d$, and then take $\tilde{q}$ to be a smooth approximation of $\hat{q}$ and apply a linear rescaling.

Now fix $k \ge \max\{k_1,k_2\} \ge 2$ such that $k^\alpha ( 1-  k^{-\alpha} ) > 2/\eps$, define $t_n = k^n$ and $w_n = t_n^{-\alpha}$, and consider the field $f = q \star_1 W$ where 
  \[ q(x,t) = \sum_{n \in \N} \sqrt{w_n} \tilde{q}(x/t_n) \id_{t \in [t_n-1,t_n]} .\]
 We can check immediately that $q$ satisfies Assumption \ref{a:main} and that $q$ has conic support. By construction we also have
 \[  \int_{\R^d \times [t,\infty) }q^2(x,s) \,dx ds \le  \sum_{n \in \N: t_n \ge t} w_n    \quad \text{and} \quad  K(x) = \sum_{n \in \N: t_n \ge x} w_n \widetilde{K}(x/t_n)  ,   \]
 and moreover we have, for each $n \in \N$,
 \[ \sum_{m \ge n+1} w_m = w_n \frac{k^{-\alpha}}{1 - k^{-\alpha}}   < (\eps/2) w_n.\]
 This implies that, for some $c > 0$,
 \[ \int_{\R^d \times [t,\infty) } q^2(x,s) \,dx ds \le c t^{-\alpha} \]
 and so  $f \in \mathcal{F}_\alpha$. Finally, recalling that $ \|\widetilde{K} \|_\infty = 1$, for all $n \in \N$,
 \[   K( t_n e_1 / k_1)  = w_n \widetilde{K}(1/k_1) +    \sum_{m \ge n+1} w_m \widetilde{K}(t_n k_1 / t_m) \le - \eps w_n  +(\eps/2) w_n  =-  (\eps/2) t_n^{-\alpha}, \]
 and similarly $K( t_n e_1 / k_2) \ge   (\eps/2) t_n^{-\alpha}$. Hence $K$ satisfies \eqref{e:oc}.
 \end{proof}

\begin{proof}[Proof of Theorem \ref{t:npc}]
Combine Proposition \ref{p:npc} with Theorem \ref{t:main1}.
\end{proof}

\subsubsection{Heat kernel decompositions of the Gaussian free field and membrane models}

Recall that very recently it has been shown that the GFF and GMM are in class $\mathcal{F}_{d-2}$ and $\mathcal{F}_{d-4}$ respectively \cite{sch22}. For completeness we present a simple argument, following \cite{dgrs20}, that shows the GFF and GMM are in $\mathcal{F}_{(d-2)/2}$ and $\mathcal{F}_{(d-4)/2}$ respectively, which is already sufficient to yield the sharpness of the phase transition in the sense of Corollary \ref{c:main1}. This is based on a heat kernel decomposition of the Green's function.

\smallskip
Recall that $(X_n)_{n \in \N_0}$ denotes the simple random walk (SRW) on $\Z^d$, with $G$ its Green's function. Define the `lazy' SRW $(X^\text{lazy}_n)_{n \in \N_0}$ on $\Z^d$ which stays put with probability $1/2$ and moves to one of its neighbours uniformly at random with probability $1/2$; by decomposing into excursions we have that, for $x \in \Z^d$,
\[ G(x) = \sum_{n \in \N_0} \P(X_n = x) = \frac{1}{2}  \sum_{n \in \N_0} \P(X^{\textrm{lazy}}_n = x) .  \]
Recall also the lattice $\M^d$, formed from adding to $\Z^d$ vertices at midpoints of all edges, and let $(X^{\M^d}_n)_{n \in \N_0}$ denote the simple random walk on $\M^d$; the trace of $X^{\M^d}$ on $\Z^d$ is $X^\text{lazy}$. By the Markov property and stationarity we have, for $x \in \Z^d$ and $n \in \N$,
\begin{align*}
\label{e:lazy}
  \P(X^\text{lazy}_n = x) = \P(X^{\M^d}_{2n} = x) &=  \sum_{y \in \M^d}  \P( X^{\M^d}_n = y ) \P( X^{\M^d}_{2n} = x  \, | \,  X^{\M^d}_n = y  ) \\ & = \sum_{y \in \M^d}  \P( X^{\M^d}_n = y ) \P( X^{\M^d}_n = x-y)  .  
  \end{align*}
Defining $q(x,n) = \frac{1}{\sqrt{2}} \P( X^{\M^d}_{n-1} = x )$, we conclude that $G(x)$ has the representation \begin{equation}
\label{e:gfdecomp}
  G(x) = \sum_{(y,n) \in \M^d \times \N } q(y,n) q(x-y,n) . 
  \end{equation}
Note that the use of the lattice $\M^d$ was crucial in this representation, since it allows the self-convolution representation of the kernel $\P(X^\text{lazy}_n = x)$.
 
\smallskip We next derive an analogous decomposition for the convolution $(G \star G)(x)$. Let $Y^\text{lazy}$ denote an independent copy of $X^\text{lazy}$. Then by the Markov property and stationarity 
\begin{align*}
 (G \star G)(x) =  \sum_{y \in \Z^d} G(x-y) G(y) &   = \frac{1}{4} \sum_{n,m \in \N_0}  \sum_{y \in \Z^d}  \P(X^\text{lazy}_n = x-y, Y^\text{lazy}_m = y ) \\
 & = \frac{1}{4}  \sum_{n,m \in \N_0} \P(X^\text{lazy}_{n+m}= x)  = \frac{1}{4}  \sum_{n \in \N_0} (n+1) \P(X^\text{lazy}_n = x) .
 \end{align*}
Defining $\tilde{q}(x,n) = \frac{\sqrt{n}}{2} \P( X^{\M^d}_{n-1} = x )$, we see that
\begin{equation}
\label{e:gfdecomp2}
( G \star G)(x)  = \sum_{(y,n) \in \M^d \times \N } \tilde{q}(y,n) \tilde{q}(x-y,n) . 
  \end{equation}

\begin{lemma}
\label{l:gffex}
 The GFF and GMM are in class $\F_{(d-2)/2}$ and $\F_{(d-4)/2}$ respectively.
 \end{lemma}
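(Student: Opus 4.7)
The plan is to exhibit an explicit $q \in L^2_\text{sym}(\R^d \times \R^+)$ with conic support realizing the GFF (respectively, the GMM) as $f \equlaw (q \star_1 W)|_{\Z^d}$, starting from the random-walk self-convolution representations \eqref{e:gfdecomp} and \eqref{e:gfdecomp2} and parametrizing the scale $t$ by the \emph{number of walk steps} $n$. With this (deliberately inefficient) parametrization, the spatial spread of $q_n$ is of order $\sqrt n$ by the local central limit theorem, which sits comfortably inside the conic region $\{|x| \le t/2 = n/2\}$, so truncating $q_n$ to the cone loses only sub-Gaussian tail mass. This is crucial: had we tried the spatially natural scaling $t \asymp \sqrt n$, the truncation would be tight and fail to be negligible.

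I would then proceed in three steps. First, apply the lift in Remark~\ref{r:spec} to convert the discrete identities $G = \sum_n q_n \star q_n$ and $G \star G = \sum_n \tilde q_n \star \tilde q_n$ into exact continuous representations by piecewise-constant extension on the cubes $(i,n) + (-1/4, 1/4]^d \times [0, 1)$, $(i, n) \in \M^d \times \N$. Second, truncate $\tilde q(i, n)$ by its restriction to $\{|i| \le n/4\}$: after accounting for the cube expansion this yields strict conic support for all $n$ sufficiently large, and the discarded $L^2$-mass is bounded by the sub-Gaussian tail estimate
\begin{equation*}
    \P(|X^{\M^d}_{n-1}| \ge n/4) \le c_1 e^{-c_2 n},
\end{equation*}
so the total covariance error is super-exponentially small and can be absorbed by adjoining a correction kernel supported at finitely many initial scales $t \le T_0$, where conic support is automatic. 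Third, verify the decay bound \eqref{e:fdecay}: the local CLT gives
\begin{equation*}
    \|q_n\|_{L^2(\M^d)}^2 = \tfrac{1}{2} \P(X^{\M^d}_{2(n-1)} = 0) \asymp n^{-d/2}, \qquad \|\tilde q_n\|_{L^2(\M^d)}^2 \asymp n^{1-d/2},
\end{equation*}
and since the lift places $q(\cdot, t)$ piecewise-constantly on intervals $t \in [n-1, n)$ with $L^2(\R^d)$-norm proportional to the discrete norm,
\begin{equation*}
    \int_{\R^d \times [t, \infty)} q^2(x, s) \, dx \, ds \le c \sum_{n \ge \lceil t \rceil} n^{-d/2} \le c' t^{-(d-2)/2}
\end{equation*}
for the GFF and analogously $\le c' t^{-(d-4)/2}$ for the GMM, establishing the claimed class memberships.

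The main obstacle is the truncation step. Since $q \mapsto q \star_1 q$ is not additive, decomposing $q_n = q_n^\text{tr} + q_n^\text{err}$ shifts the covariance by $\sum_n(2 q_n^\text{tr} \star q_n^\text{err} + q_n^\text{err} \star q_n^\text{err})$, which contains cross terms that are not manifestly positive semi-definite. The cleanest way to sidestep this is to observe that the residual pointwise-bounded covariance decays super-polynomially in $|x|$, so it can be realized by a single ``short-range'' psd correction kernel (e.g.\ via a direct spectral factorization on $\Z^d$) supported at initial scales $t \le T_0$; this contribution has conic support for free and adds only an $O(1)$ constant to the tail integral, not affecting the exponent. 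The remainder of the argument is routine bookkeeping.
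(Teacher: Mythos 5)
You correctly identify the paper's decomposition and the key heuristic (parametrize the scale $t$ by the number of walk steps $n$, so the typical spatial spread $\sqrt{n}$ sits deep inside the cone $\{|x| \le n/2\}$), and your verification of the $L^2$-decay bound via the local CLT is essentially the paper's. But the truncation step is a genuine misstep, in two ways.

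First, it is unnecessary. The SRW on $\M^d$ has step size $1/2$ in $\ell^\infty$-norm, so after $n-1$ steps it is \emph{deterministically} within $\ell^\infty$-distance $(n-1)/2$ of the origin; hence $q(x,n) = \tfrac{1}{\sqrt2}\P(X^{\M^d}_{n-1}=x)$ already has support contained in $\{|x|_\infty \le (n-1)/2\}$, which lies inside the cone $\{|x| \le t/2\}$ (after the $1/4$-smearing in Remark~\ref{r:spec}). There is no ``sub-Gaussian tail mass'' to discard -- the transition kernel of a finite-range walk is compactly supported, and the local CLT's Gaussian shape describes the bulk, not the support. The paper simply observes this and needs no truncation at all.

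Second, your proposed fix for the (nonexistent) error is not justified as stated. After truncating $q_n = q_n^{\mathrm{tr}} + q_n^{\mathrm{err}}$, the residual covariance is $h = \sum_n (\hat q_n^2 - (\hat q_n^{\mathrm{tr}})^2) = \sum_n \hat q_n^{\mathrm{err}}(\hat q_n + \hat q_n^{\mathrm{tr}})$ in Fourier, which is not manifestly nonnegative (the cross-term $\hat q_n^{\mathrm{err}}\hat q_n^{\mathrm{tr}}$ can have either sign). You would therefore need to \emph{prove} that $h$ is positive semi-definite before you may ``realize it by a psd correction kernel via spectral factorization,'' and this is not obvious; asserting it is a gap. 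Since the truncation is avoidable, the clean route is to not truncate: the exact kernel $q(\cdot,n)$ is already in the cone, it realizes the GFF (resp.\ GMM) covariance exactly by \eqref{e:gfdecomp} (resp.\ \eqref{e:gfdecomp2}), and the decay bound $\sum_{n \ge n'} \|q(\cdot,n)\|_{L^2}^2 \lesssim (n')^{-(d-2)/2}$ (resp.\ $(n')^{-(d-4)/2}$) then gives membership in $\F_{(d-2)/2}$ (resp.\ $\F_{(d-4)/2}$) directly via Remark~\ref{r:spec}.
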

\begin{proof}
Recall that $(W(x,n))_{(x,n) \in \M^d \times \N}$ is a collection of independent standard Gaussians. The decomposition \eqref{e:gfdecomp} implies that the stationary Gaussian field
\[  f(x) =  \sum_{(y,n) \in \M^d \times \N } q(x-y,n-1) W(y, n)  \]
has covariance kernel $G(x)$, and is hence distributed as the GFF. Furthermore, the support of $q$ is contained in $\{ (x,n) : |x|_\infty \le  (n-1)/2\}$, and by the local central limit theorem $q(x,n)$ is bounded by $c_1 n^{-d/2}$ with Gaussian tails on scale $n^{-1/2}$, which implies that (recall $d \ge 3$)
\[ \sum_{n \ge n'} \sum_{x \in \M^d} q^2(x,n)   \le \sum_{n \ge n'}    c_2 n^{-d/2}  \le c_3 (n')^{-(d-2)/2} \]
for constants $c_1,c_2, c_3 > 0$ and all $n' \in \N$. Recalling Remark \ref{r:spec}, this shows that the GFF is in class $\F_{(d-2)/2}$.

\smallskip
The proof for the GMM is similar, with the only difference that (recall $d \ge 5$)
\begin{equation*}
\sum_{n \ge n'} \sum_{x \in \M^d} \tilde{q}^2(x,n)  = \sum_{n \ge n'} \frac{n}{2}  \sum_{x \in \M^d} q^2(x,n)   \le  \sum_{n \ge n'} c_2 n^{1-d/2} \le   c_4 (n')^{-(d-4)/2} . \qedhere 
\end{equation*}
\end{proof}

%%%%%

\smallskip

\section{Sharpness of the phase transition}
\label{s:sharp}

In this section we establish the sharpness of the phase transition in~Theorem \ref{t:main1}, and also prove the non-triviality in Proposition~\ref{p:nontriv}.

\smallskip
Throughout this section we fix a Gaussian field $f \in \F_\alpha$, $\alpha > 0$, and a representation $f = q \star_1 W$ (or $f = (q \star_1 W)|_{\Z^d}$ in the discrete case, although we treat continuous and discrete fields simultaneously). Note that the parameter $\alpha > 0$ will appear in several of the bounds, and constants may depend on~it.

\subsection{Preliminaries: The OSSS and Russo-type inequalities}
\label{s:prelim}

As outlined in Section~\ref{s:outline}, the general strategy is to establish the differential inequality \eqref{e:mens3} by combining OSSS and Russo-type inequalities. Here we introduce these inequalities.

\smallskip
Let us immediately specialise to the relevant setting. We work with a dyadic partition of $\R^d \times \R^+$ into disjoint boxes $(B_{i,m})_{ (i,m) \in \Z^d \times \N_0}$, where 
\[ B_{i, m} =  \begin{cases}
\big( i + (0,1]^d \big) \times [0, 1]  &  m = 0 , \\
\big( 2^m i + (0,2^m]^d \big) \times (2^{m-1}, 2^m]  = 2^m  \big( ( i + (0,1]^d ) \times (1/2, 1] \big)    &  m \ge 1  .
  \end{cases} \]
This induces an orthogonal decomposition $W= \sum_{ (i,m) \in \Z^d \times \N_0} W_{i,m}$, where $W_{i,m} = W|_{B_{i,m}}$. We also define $B^+_m = \cup_{(i,m') \in \Z^d \times \N_0 : m' \ge m } B_{i,m'} $ and $W^+_m = W|_{B_m^+}$, and define the random vector
\begin{equation}
\label{e:subset}
 \mathbb{W}_m = \big((W_{i,m'})_{(i,m')  \in \Z^d \times \N_0: m' \le m-1} , W^+_m \big)   . 
 \end{equation}  
 
\smallskip
For an event $A$ and a box $B_{i, m}$, we define the \textit{resampling influence of $W_{i, m}$ on $A$} as
\[ \Inf_{i,m} = \P[  \id_{f \in A} \neq \id_{f' \notin A} ]  ,\] 
where $f'$ is the Gaussian field generated from $f$ by resampling $W_{i,m}$ independently. Equivalently, 
\[ \Inf_{i,m} = 2 \E[ \var[  \id_{f \in A}  | \mathcal{F}_{i,m}  ]   ] ,  \]
 where $\mathcal{F}_{i,m}$ denotes the $\sigma$-algebra generated by $ \sum_{ (i',m') \in \Z^d \times \N_0  \setminus \{(i,m)\} } W_{i',m'}$. Similarly we define
\[ \Inf^+_m = \P[  \id_{f \in A} \neq \id_{f^\ast \notin A} ] = 2 \E[ \var[  \id_{f \in A}  | \mathcal{F}_m^+  ]   ]  , \] 
where $f^\ast$ is the Gaussian field generated from $f$ by resampling $W^+_m$ independently, and $\mathcal{F}_m^+$ denotes the $\sigma$-algebra generated by $ W|_{(B^+_m)^c}$. Note that $\Inf^+_0 = 2 \var[ \id_A ]$, and that $m \mapsto \Inf^+_m$ is non-increasing.

\smallskip
Next, consider a compactly-supported event $A$ (i.e.\ that depends on the restriction of $f$ to a compact domain $D \subset \R^d$) and fix an $m \in \N_0$. A \textit{class-$m$ randomised algorithm that determines $A$} is a random adapted procedure that iteratively reveals the coordinates of the random vector $\mathbb{W}_m$, and terminates once the event $A$ has been determined. Notice that, by the `conic support' property of the kernel $q$, the event $A$ depends on a finite subset of coordinates of $\mathbb{W}_m$, and without loss of generality we may restrict the algorithm to reveal only these coordinates. Fixing such an algorithm, we let $\Rev_{i,m}$ and $\Rev^+_m$ denote the probability that $W_{i,m}$ and $W^+_m$ respectively are revealed by this algorithm. 

\smallskip
The OSSS inequality applied in this setting yields the following:

\begin{proposition}[OSSS inequality {\cite{osss05}}] 
\label{p:osss}
Let $A$ be a compactly-supported event, let $m \in \N_0$, and consider a class-$m$ randomised algorithm that determines $A$. Then
\[ \var[ \id_A ] \le \frac{1}{2}  \Big(  \Rev^+_m \Inf^+_m \ +  \! \! \! \sum_{(i,m') \in \Z^d \times \N_0 : m' \le m-1} \! \! \!  \Rev_{i,m} \Inf_{i,m} \Big) . \]
\end{proposition}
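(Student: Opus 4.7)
The plan is to reduce the claim to the classical OSSS inequality of O'Donnell--Saks--Schramm--Servedio \cite{osss05} applied to the random vector $\mathbb{W}_m$, viewed as a family of independent coordinates on a product probability space. First I would check that this viewpoint is legitimate. Since the boxes $(B_{i,m'})_{(i,m') \in \Z^d \times \N_0}$ are disjoint and partition $\R^d \times \R^+$, the white-noise restrictions $W_{i,m'}$ and $W_m^+$ are mutually independent Gaussian processes, so the law of $\mathbb{W}_m$ is a product measure. The decomposition $W = W_m^+ + \sum_{m' \le m-1}\sum_i W_{i,m'}$ then ensures that $f = q \star_1 W$, and hence $\id_A$, is measurable with respect to $\mathbb{W}_m$. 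Since $A$ is compactly supported and $q$ has conic support, $\id_A$ in fact depends on only finitely many of the $W_{i,m'}$ together with $W_m^+$, so after discarding the null coordinates we are working on a finite (or at worst countable) product space to which the standard OSSS framework directly applies.

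Next I would match the quantities: in the classical OSSS setup, the revealment $\delta_j$ of the $j^\textrm{th}$ coordinate is the probability it is queried by the algorithm, and the resampling influence $\mathrm{I}_j = \P[\phi(X) \neq \phi(X^{(j)})] = 2\,\E[\var[\phi \mid \mathcal{F}_j]]$ measures the effect of an independent resampling. With $\phi = \id_A$ and $X = \mathbb{W}_m$, these are precisely $\Rev_{i,m'}, \Rev_m^+$ and $\Inf_{i,m'}, \Inf_m^+$ as defined in the excerpt (using the conditional-variance identity noted there). The class-$m$ randomised algorithm is, by definition, a randomised decision tree for $\phi$ that queries only these coordinates.

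Applying the OSSS inequality in the form
\[
\var[\phi] \;\le\; \sum_{j} \delta_j \, \E\big[\var[\phi \mid \mathcal{F}_j]\big]
\]
to $\phi = \id_A$, and substituting $\E[\var[\id_A \mid \mathcal{F}_j]] = \Inf_j/2$, yields the stated bound with its factor of $1/2$. I do not anticipate any real obstacle: the entire content has been absorbed into the construction of the dyadic partition and the independence structure of white noise, and the only mild bookkeeping is truncating to the finitely many coordinates on which $\id_A$ actually depends so that the finite-dimensional OSSS inequality of \cite{osss05} applies verbatim.
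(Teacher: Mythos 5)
Your proposal is correct and is essentially the paper's own argument: the paper likewise dispatches this by noting it is a direct application of the OSSS inequality to the finite vector of independent coordinates of $\mathbb{W}_m$ on which $\id_A$ depends (finiteness coming from the compact support of $A$ and the conic support of $q$), with the factor $1/2$ accounted for by the identity $\Inf_j = 2\,\E[\var[\id_A \mid \mathcal{F}_j]]$. Your write-up just makes the bookkeeping explicit.
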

\begin{proof}
This is a direct application of the OSSS inequality, which applies to any finite vector of independent random variables taking values in (possibly distinct) measurable spaces.
\end{proof}

Consider now an \textit{increasing event} $A$, i.e.\ an event $A$ such that $\{f \in A\}$ implies $\{f + h \in A\}$ for every function $h \ge 0$. We next present a Russo-type inequality for $A$, which can be viewed as a consequence of Gaussian isoperimetry. A similar inequality was first established in \cite{dm21} in a related setting, but for completeness we give a proof in Section \ref{s:russo}. We say that an increasing event $A$ is a \textit{continuity event} if  $\ell \mapsto \P_\ell[f + \psi \in A]$ (resp.\ $\ell \mapsto \P_\ell[f|_{\Z^d} + \psi|_{\Z^d}]$ in the discrete case) is continuous for every smooth function $\psi$. Recall the \textit{Dini derivative}, defined for $f: \R \to \R$ as
\[ \frac{d^+}{dx}f(x)=\liminf\limits_{\varepsilon \downarrow 0}\frac{f(x+\varepsilon)-f(x)}{\varepsilon} ,\]
which we use to avoid assuming $\ell\mapsto \P_\ell[A]$ is differentiable.

\begin{proposition}[Russo-type inequality]
\label{p:russo}
There exists a $c > 0$ such that, for every increasing compactly-supported continuity event $A$ and $m \in \N_0$,
\[   \frac{d^+}{d\ell} \P[f + \ell  \in A ] \bigg|_{\ell = 0}  \ge  c (2^m)^{\alpha/2}   \max\Big\{   \Inf^+_m ,  \sum_{i \in \Z^d} \Inf_{i,m}   \Big\} .\]
\end{proposition}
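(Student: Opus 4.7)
The plan is to combine a Cameron--Martin identity for the level derivative with a directional Russo-type Gaussian isoperimetric inequality, in the spirit of \cite{dm21}, but carefully tracking the scale-dependent constants that appear in the multi-scale white noise decomposition. Let $D \subset \R^d$ denote the spatial support of the compactly-supported continuity event $A$. For any $h \in L^2_\text{sym}(\R^d \times \R^+)$ satisfying $(q \star_1 h)(x) \equiv 1$ on $D$, the level shift $f \mapsto f + \ell$ on $D$ is realised by the Cameron--Martin shift $W \mapsto W + \ell h$, and Gaussian integration by parts yields
\[
\frac{d^+}{d\ell}\P_\ell[A]\Big|_{\ell=0} \,=\, \E\bigl[\id_A\,\delta(h)\bigr],
\]
where $\delta(h) = \int h\,dW$ is the Wiener integral. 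The continuity event hypothesis and approximation of $\id_A$ by smooth cylinder functionals justify this identification as a Dini-type identity.

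The core technical step is to establish, for any measurable $B \subset \R^d \times \R^+$ and any $h \in L^2(B)$, the directional Russo-type inequality
\[
\E\bigl[\id_A\,\delta(h)\bigr] \,\ge\, (c_0/2)\,\|h\|\,\Inf_B,
\]
where $\Inf_B = 2\E[\var[\id_A \mid W|_{B^c}]]$ and $c_0>0$ is universal. To prove this, I would condition on $W|_{B^c}$; the remaining randomness $W|_B$ is independent Gaussian and contributes to $f|_D$ linearly. Decomposing $W|_B$ orthogonally into its component along $h$ (a one-dimensional standard Gaussian after normalisation) and the perpendicular part, the event $A$ reduces, conditionally on the perpendicular part, to a one-dimensional monotone event in a standard Gaussian. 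A direct half-line computation then yields the lower bound with universal constant.

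At scale $m$, the Cameron--Martin direction $h$ is chosen to localise on the relevant scale. For the $\Inf_m^+$ bound, take $h = h_m^+ \in L^2(B_m^+)$ to be the minimum-norm solution to $(q \star_1 h)(x) \equiv 1$ on $D$; by Hilbert-space duality, $\|h_m^+\|^2 \ge 1/\var[f_m^+(x_0)]$ for any $x_0 \in D$, where $f_m^+ = (q|_{B_m^+}) \star_1 W$, and by \eqref{e:fdecay} this gives $\|h_m^+\| \ge c_1 (2^m)^{\alpha/2}$. For the $\sum_i \Inf_{i,m}$ bound, construct $h = \sum_i h_i$ with each $h_i \in L^2(B_{i,m})$ a localised bump (e.g.\ a suitable translate of $q$ restricted to $B_{i,m}$) chosen via a partition of unity so that $\sum_i (q \star_1 h_i) \equiv 1$ on $D$; the same single-point variance bound applied blockwise yields $\|h_i\| \ge c_2 (2^m)^{\alpha/2}$ uniformly in $i$. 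Applying the Cameron--Martin identity together with $\delta(h) = \sum_i \delta(h_i)$ and the directional inequality per block gives the $\sum_i \Inf_{i,m}$ conclusion, while applying the inequality once with $B = B_m^+$ and $h = h_m^+$ gives the $\Inf_m^+$ conclusion.

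The main obstacle is the directional Russo-type inequality in the second paragraph. The subtle point is that the one-dimensional projection onto $h$ must genuinely reduce $A$, conditional on the perpendicular noise, to a monotone event in a single standard Gaussian, so that the half-line isoperimetric lower bound applies with a constant independent of $A$. A secondary technicality is the per-block decomposition $h = \sum_i h_i$: the bumps must sum to $1$ on $D$ while each individually has the correct norm lower bound, which requires a choice of partition of unity compatible with the spatial scale $2^m$.
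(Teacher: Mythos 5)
Your overall scaling heuristic is right (the factor $(2^m)^{\alpha/2}$ does come from the smallness of $\|q\|_{L^2(\R^d\times[2^{m-1},\infty))}$ guaranteed by \eqref{e:fdecay}, equivalently from the large RKHS norm needed to shift the high-scale field by a constant), but the core technical step — the ``directional Russo-type inequality'' $\E[\id_A\,\delta(h)] \ge (c_0/2)\|h\|\,\Inf_B$ — is false, and this is a fatal gap. Two ways to see it. First, for any $h$ with $(q\star_1 h)\equiv 1$ on $D$, the quantity $\E[\id_A\,\delta(h)]$ does not change if you add to $h$ any element of $L^2(B)$ orthogonal to $\mathrm{span}\{q(x-\cdot):x\in D\}$ (the added Wiener integral is independent of $\id_A$), while $\|h\|$ can be made arbitrarily large this way; so the inequality cannot hold with $\Inf_B$ fixed on the right. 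Second, and more fundamentally, even for the minimum-norm $h$ your one-dimensional reduction only controls the resampling influence of the \emph{single direction} $h/\|h\|$ inside $L^2(B)$: conditioning on $W|_{B^c}$ \emph{and} the perpendicular part of $W|_B$, you get $\E[\id_A\,\delta(h)]=\|h\|\,\E[\phi(T)]\ge c\|h\|\,\E[\bar\Phi(T)\Phi(T)]$ where $T$ is the random threshold, but by Jensen $\E[\bar\Phi(T)\Phi(T)]\le p(1-p)=\var[\id_A\mid W|_{B^c}]$ — the inequality points the wrong way. If $A$ is determined mostly by the perpendicular components of $W|_B$ (so $T$ is typically $\pm\infty$), the left side is near $0$ while $\Inf_B$ stays of order one. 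Your $\sum_i\Inf_{i,m}$ argument inherits the same defect blockwise.

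What the paper does instead is apply the \emph{multi-dimensional} Gaussian isoperimetric inequality (Proposition \ref{p:iso}) to the whole conditional Gaussian space $L^2(B_m^+)$ (resp.\ $L^2(B_{i,m})$): thickening the event by $\varepsilon$ in \emph{every} direction of that space lower-bounds $\P[E^{+\varepsilon}\setminus E]$ by $c\varepsilon\,\P[E](1-\P[E])$, which is exactly the full conditional variance appearing in $\Inf_m^+$ (resp.\ $\Inf_{i,m}$). The price is that the perturbation of the field is now worst-case over the $\varepsilon$-ball, and this is where \eqref{e:fdecay} enters as an \emph{upper} bound: $\sup_{\|w\|\le\varepsilon}\|q\star_1 w\|_{\infty,D}\le \|q\|_{L^2(B_m^+)}\,\varepsilon\le c(2^m)^{-\alpha/2}\varepsilon$ by Cauchy--Schwarz, so since $A$ is increasing the thickened event is contained in $\{f+c(2^m)^{-\alpha/2}\varepsilon\in A\}$, and dividing by the level shift yields the $(2^m)^{\alpha/2}$ gain. (For the $\sum_i\Inf_{i,m}$ bound the paper additionally uses conic support to localise the perturbation under bump functions $\psi_i$ with $\sum_i\psi_i\le c$, plus a chain rule for Dini derivatives.) If you want to salvage a Cameron--Martin route, you would have to replace your one-dimensional projection by something that sees all directions of $L^2(B)$ at once, which is precisely what the isoperimetric inequality accomplishes; as written, your lemma proves a bound in terms of the influence of one coordinate, not of the block.
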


\begin{remark}
We only use the full strength of Proposition \ref{p:russo} in the case $\alpha \le 2(d-1)$; if $\alpha > 2(d-1)$ it is sufficient to have the bound
\[  \frac{d^+}{d\ell} \P[f + \ell  \in A ] \bigg|_{\ell = 0}  \ge  c (2^m)^{\alpha/2}  \sum_{i \in \Z^d} \Inf_{i,m} .\]
\end{remark}

\subsection{The Menshikov-type differential inequality}
Recall the definition of $\theta_R(\ell)$ from~\eqref{e:thetar}. In this subsection we establish that $\theta_R(\ell)$ satisfies the inequality \eqref{e:mens3}:

\begin{proposition}[Menshikov-type differential inequality]
\label{p:mens}
For every $\eps \in (0, \alpha/(2(d-1))$ and $\ell_0 \in \R $ there exists a $c > 0$ such that, for all $ \ell < \ell_0 $ and $R \ge 1$,
 \begin{equation}
\label{e:mens4}
 \frac{d^+}{d\ell} \theta_R(\ell) \ge \frac{c \, \theta_R(\ell)}{ \big( \frac{1}{R} \sum_{r = 1}^{\lceil R \rceil}\theta_r(\ell) \big)^{\gamma}  } ,
 \end{equation}
 where $\gamma = \min\{ 1 , \alpha/(2(d-1)) - \eps \id_{\alpha = 2(d-1)} \}$.
\end{proposition}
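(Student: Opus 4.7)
The plan is to execute the multi-scale OSSS + Russo strategy sketched in Section~\ref{s:outline}. For each scale parameter $t = 2^m$, $m \in \mathbb{N}_0$, I would apply Proposition~\ref{p:osss} to a suitable class-$m$ randomised algorithm determining $A_R = \{\Lambda_{1/2} \leftrightarrow \partial \Lambda_R\}$, combine with Proposition~\ref{p:russo}, and finally optimise over $m$.

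The algorithm proceeds as follows: pick a uniformly random integer $r \in \{1, \ldots, \lceil R \rceil\}$; reveal the bulk coordinate $W^+_m$ in its entirety (so $\Rev^+_m = 1$); then iteratively explore the $\{f \ge 0\}$-cluster of $\partial \Lambda_r$, querying individual boxes $W_{i,m'}$ with $m' < m$ only when the current exploration enters their region of influence. A standard random-radius argument, using stationarity and the conic support of $q$, bounds the per-box revealments by quantities involving $\tfrac{1}{R}\sum_{r=1}^{\lceil R \rceil}\theta_r(\ell)$ together with geometric factors coming from the box size $2^{m'}$. The Russo-type inequality (Proposition~\ref{p:russo}) controls the influences, giving $\Inf^+_m \le C\,\theta_R'(\ell)\,(2^m)^{-\alpha/2}$ and $\sum_i \Inf_{i,m'} \le C\,\theta_R'(\ell)\,(2^{m'})^{-\alpha/2}$.

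Substituting into the OSSS inequality and summing the resulting geometric series over $m' < m$ yields an inequality of the schematic form
\[
\theta_R(\ell) \lesssim \theta_R'(\ell)\Bigl[(2^m)^{-\alpha/2} + \Phi(m)\cdot \tfrac{1}{R}\sum_{r=1}^{\lceil R\rceil}\theta_r(\ell)\Bigr],
\]
where the prefactor $\Phi(m)$ captures the geometric growth of the small-scale revealments. Following Section~\ref{s:outline}, the two terms correspond to two competing lower bounds on $\theta_R'(\ell)$: a Russo bound $\theta_R'(\ell) \ge c\, t^{\alpha/2}\theta_R(\ell)$, valid when $\Inf^+_t$ is comparable to $\theta_R$ (at and below the critical scale $t_R$ implicitly defined by $\Inf^+_{t_R} \asymp \theta_R$), and an OSSS bound $\theta_R'(\ell) \ge c\, t^{d-1-\alpha/2}\theta_R(\ell)/S$, valid above $t_R$, where $S = \tfrac{1}{R}\sum_r \theta_r$. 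Although $t_R$ is unknown, one of the two bounds applies at every scale $t$, so I would take $\inf_{t \ge 1}$ of the maximum; direct optimisation then yields the claimed exponent $\gamma = \min\{1, \alpha/(2(d-1))\}$ in \eqref{e:mens4}, with the boundary case $\alpha = 2(d-1)$ requiring an $\eps$-loss to absorb a logarithmic factor from the geometric sum.

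The main obstacle is the rigorous implementation in the Gaussian setting: (i) one must first replace $A_R$ by a suitable compactly-supported increasing continuity event so that Propositions~\ref{p:osss} and \ref{p:russo} apply, which I would achieve by truncating the finite-range decomposition at a sufficiently large scale together with continuity of $\ell \mapsto \P_\ell[A_R]$; (ii) the random-radius revealment estimates require care for boxes $B_{i,m'}$ near the boundary of $\Lambda_R$, where a sprinkling argument is typically needed to compare conditional and unconditional connection probabilities; and (iii) the bookkeeping at the threshold $\alpha = 2(d-1)$, where the geometric sum becomes logarithmic in $m$, accounts for the $\eps$-loss in the statement.
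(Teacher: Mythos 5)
Your proposal follows essentially the same route as the paper: the same random-radius exploration algorithm revealing $W^+_m$ in full plus the small-scale boxes met by the cluster of $\partial\Lambda_r$, the same revealment bound $\Rev_{i,m'}\lesssim (2^{m'})^{d-1}\tfrac1R\sum_r\theta_r(\ell)$, the same use of Proposition~\ref{p:russo} to convert influences into $\theta_R'(\ell)$, and the same optimisation over the critical scale $m_0$ at which $\Inf^+_m$ drops below $\var[\id_A]$, with the $\eps$-loss at $\alpha=2(d-1)$ absorbing the logarithmic geometric sum. The obstacles you flag are lighter than you fear: $\{\Lambda_{1/2}\leftrightarrow\partial\Lambda_R\}$ is already compactly supported and is a continuity event by Corollary~\ref{c:cont}(4) (no truncation needed), and the revealment estimate needs only stationarity and the union bound, not sprinkling.
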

\begin{remark}
The $\eps > 0$ in the case $\alpha = 2(d-1)$ is not sharp and is mainly for convenience; one could also replace it with a log-factor.
\end{remark}
\begin{proof}
Fix $\ell_0 \in \R$, $\ell < \ell_0$, $R \ge 1$, and define $A = \{ f + \ell  \in  \Lambda_{1/2} \leftrightarrow \partial \Lambda_R \}$. Recall the resampling influences $\Inf_{i,m}$ and $\Inf^+_m$ from the previous section (relative to $A$). Since $\Inf^+_0 = 2 \var[ \id_A ]$ and $ m \mapsto \Inf^+_m $ is non-increasing, we may define $m_0 \in \N_0 \cup \{\infty\}$ to be the smallest integer satisfying
\[     \Inf^+_m  \ge \var[ \id_A ]  \quad \text{if } m \le m_0    \quad \text{and} \quad  \Inf^+_m  \le  \var[ \id_A ]  \quad \text{if } m > m_0    . \]
We claim that the following two inequalities hold:
\begin{equation}
\label{e:de1}
\frac{d^+}{d\ell} \theta_R(\ell)  \ge     c_1 (2^m)^{\alpha/2}  \theta_R(\ell)     \quad \text{if } m \le m_0  
\end{equation}
and
\begin{equation}
\label{e:de2}
  \frac{d^+}{d\ell} \theta_R(\ell)  \ge     \frac{  c_1  \Big(\sum_{m' \ge 0}^{m-1} (2^{m'})^{d-1-\alpha/2} \Big)^{-1}}{ \frac{1}{R} \sum_{r = 1}^{\lceil R \rceil} \theta_r(\ell) }     \theta_R(\ell)   \quad \text{if } m > m_0 ,
  \end{equation}
  where $c_1 > 0$ is a constant independent of $R$ and $\ell$.
  
 \smallskip
Let us show how \eqref{e:de1} and \eqref{e:de2} imply the proposition. If $m_0 = \infty$ then, taking $m \to \infty$ in \eqref{e:de1}, we have $\frac{d^+}{d\ell} \theta_R(\ell)  = \infty$ and there is nothing to prove. So assume $m_0 < \infty$. We further separate the two cases (i) $\alpha > 2(d-1)$ and (ii) $\alpha \le 2(d-1)$. In the first case we apply \eqref{e:de2} with $m = m_0 + 1$, which yields
  \[   \frac{d^+}{d\ell} \theta_R(\ell)   \ge       \frac{  c_1   \Big(\sum_{m' \ge 0}^{m_0} (2^{m'})^{d-1-\alpha/2} \Big)^{-1} }{ \frac{1}{R} \sum_{r = 1}^{\lceil R \rceil} \theta_r(\ell) }     \theta_R(\ell) \ge      \frac{  c_2   \theta_R(\ell) }{ \frac{1}{R} \sum_{r = 1}^{\lceil R \rceil } \theta_r(\ell) }      \]
  for a constant $c_2 > 0$ independent of $R$ and $\ell$, which completes the proof in that case. In the second case we fix a $\eps \in (0, d-1)$, and apply \eqref{e:de1} with $m = m_0 $ and \eqref{e:de2} with $m = m_0 + 1$. This yields
  \begin{align*}
  \frac{d^+}{d\ell} \theta_R(\ell)  &  \ge c_1 \theta_R(\ell) \max\Big\{   (2^{m_0})^{\alpha/2} ,    \frac{  \Big(\sum_{m' \ge 0}^{m_0} (2^{m'})^{d-1-\alpha/2 } \Big)^{-1} }{ \frac{1}{R} \sum_{r = 1}^{\lceil R \rceil} \theta_r(\ell) }  \Big\}  \\
  & \ge  c_2 \theta_R(\ell) \inf_{t > 0}  \Big\{ t^{\alpha/2} +  \frac{ t^{\alpha/2 - (d-1) - \eps  \id_{\alpha = 2(d-1)}} }{ \frac{1}{R} \sum_{r = 1}^{\lceil R \rceil} \theta_r(\ell) }   \Big\} \\
  & \ge c_3  \theta_R(\ell)  \Big( \frac{1}{R} \sum_{r = 1}^{\lceil R \rceil} \theta_r(\ell) \Big)^{- \alpha/(2(d-1+\eps  \id_{\alpha = 2(d-1)})) }  
  \end{align*}
for constants $c_2, c_3 > 0$ independent of $R$ and $\ell$, which completes the proof.

\smallskip
So let us establish \eqref{e:de1} and \eqref{e:de2}. In fact, \eqref{e:de1} is a direct application of Proposition \ref{p:russo} (which applies to the event $\{\Lambda_{1/2} \leftrightarrow \partial \Lambda_R \}$ by item (4) of Corollary \ref{c:cont}):
\[   \frac{d^+}{d\ell} \theta_R(\ell)    \ge   c_4 (2^m)^{\alpha/2} \Inf^+_m  \ge c_4 (2^m)^{\alpha/2} \theta_R(\ell) (1 - \theta_R(\ell) ) \ge c_5  (2^m)^{\alpha/2} \theta_R(\ell)  \]
where the second inequality used that $ \Inf^+_m  \ge \var[ \id_A ] $ for $m \le m_0$, and the third inequality used that $\theta_R(\ell) \le \theta_1(\ell_0) < 1$ by monotonicity.

\smallskip
It remains to establish \eqref{e:de2}. We use the following claim:

\begin{claim}
\label{c:algo}
For every $m \in \N$ there exists a class-$m$ randomised algorithm determining $A$ such that, for all $m' \le m-1$,
\[ \sup_{i \in \Z^d} \Rev_{i, m'} \le  \frac{c_6 (2^{m'})^{d-1}}{R} \sum_{r = 1}^{\lceil R \rceil} \theta_r(\ell)  \]
where $c_6 > 0$ is a constant independent of $R$ and $\ell$.
\end{claim}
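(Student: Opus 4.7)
The plan is to construct a two-sided exploration algorithm from a uniformly chosen random radius $r$, in the spirit of Tassion and Duminil-Copin--Raoufi--Tassion but adapted to the multi-scale noise structure. First, sample $r \in \{1, 2, \ldots, \lceil R \rceil\}$ uniformly and independently of $W$, then fully reveal $W_m^+$ (this is permitted since the claim places no constraint on $\Rev_m^+$). Next, explore the connected component $\mathcal{C}_r$ of a fine-grid approximation of $\partial \Lambda_r$ inside $\{f + \ell \ge 0\} \cap \Lambda_R$ via a standard breadth-first queue. Whenever the exploration needs to evaluate $f$ at a new grid point, reveal precisely those coordinates $W_{i,m'}$, $m' \le m-1$, whose spatial influence region contains that point. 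Since any path in $\{f + \ell \ge 0\}$ from $\Lambda_{1/2}$ to $\partial \Lambda_R$ must cross $\partial \Lambda_r$ for $1/2 < r < R$, knowledge of $\mathcal{C}_r$ determines $A = \{\Lambda_{1/2} \leftrightarrow \partial \Lambda_R\}$, so this is a valid class-$m$ randomised algorithm.

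By the conic support assumption on $q$, the coordinate $W_{i,m'}$ affects $f$ only on a spatial set $\mathcal{R}_{i,m'}$ of diameter $O(2^{m'})$ centred at $2^{m'}i$. Up to absorbing $O(1)$-enlargements into constants, the event that $W_{i,m'}$ is revealed is contained in
\[
\bigl\{\partial \Lambda_r \cap \mathcal{R}_{i,m'} \ne \emptyset\bigr\} \;\cup\; \bigl\{\mathcal{R}_{i,m'} \leftrightarrow \partial \Lambda_r \text{ in } \Lambda_R\bigr\}.
\]
The first event is purely geometric and, over the uniform choice of $r$, occurs with probability $O(2^{m'}/R) \le O((2^{m'})^{d-1}/R)$. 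For the second event, any realising path must cross the boundary $\partial \mathcal{R}_{i,m'}$, which is covered by $O((2^{m'})^{d-1})$ unit sites $y$; a union bound combined with stationarity gives
\[
\P\bigl[\mathcal{R}_{i,m'} \leftrightarrow \partial \Lambda_r\bigr] \;\le\; C (2^{m'})^{d-1}\, \theta_{\rho(i,m',r)}(\ell),
\]
where $\rho(i,m',r) := \max\bigl(1, \bigl| |2^{m'}i|_\infty - r \bigr| - c \cdot 2^{m'} \bigr)$ is a lower bound on the distance from $\partial \mathcal{R}_{i,m'}$ to $\partial \Lambda_r$.

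Averaging over $r$ uniform in $\{1, \ldots, \lceil R \rceil\}$ and noting that for fixed $i, m'$ the function $r \mapsto \rho(i, m', r)$ attains each value in $\{1, \ldots, \lceil R \rceil\}$ at most a bounded number of times, the monotonicity of $\theta_\cdot(\ell)$ yields
\[
\Rev_{i,m'} \;\le\; \frac{c_6 (2^{m'})^{d-1}}{R} \sum_{r=1}^{\lceil R \rceil} \theta_r(\ell),
\]
uniformly in $i$. The main obstacle I anticipate is making the exploration rigorous in the continuum, where $f$ is defined on $\R^d$ and connectivity of $\{f + \ell \ge 0\}$ is a continuous notion: one must discretise space on a sufficiently fine grid and use the $C^2$-smoothness from Assumption \ref{a:main} to verify that the grid-based exploration faithfully determines the continuous connectivity event. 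A secondary technical point is bookkeeping constants so that $c_6$ depends only on $d$ and the cone parameter of $q$, and not on $R$, $\ell$, $m$, or $i$.
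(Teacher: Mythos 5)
Your proposal matches the paper's proof essentially step for step: sample a uniform radius $r$, reveal $W^+_m$ entirely, explore the cluster of $\partial\Lambda_r$ by revealing the coarse coordinates $W_{i,m'}$ whose conic influence region covers each newly touched unit cell, and bound the revealment of $W_{i,m'}$ by a union bound over the $O((2^{m'})^{d-1})$ boundary cells of its influence region followed by averaging over $r$. The paper folds your two events into one by adopting the convention that $\{A\leftrightarrow B\}$ holds whenever $A$ and $B$ intersect, and sidesteps your discretisation concern by letting the algorithm reveal the whole restriction $f|_{C_i}$ as a single coordinate (the connectivity event is a continuity event by Corollary~\ref{c:cont}), so no fine-grid approximation is needed. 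One point to watch in your write-up: the assertion that $r\mapsto\rho(i,m',r)$ attains each value a bounded number of times fails at $\rho=1$, which is hit for $O(2^{m'})$ values of $r$; these must be absorbed by bounding the probability by $1$ (giving a contribution $O(2^{m'}/R)$) and then using that $\sum_{r}\theta_r(\ell)\ge\theta_1(\ell)$ is bounded away from zero for $\ell$ in a bounded interval, so that $O(2^{m'}/R)\le c(2^{m'})^{d-1}R^{-1}\sum_r\theta_r(\ell)$. The paper treats this in the same compressed way (``Using that $\P[\cdot]\le 1$ and by the union bound''), so your argument is on a par with it.
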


Let us complete the proof of \eqref{e:de2} assuming the claim. Fix $m > m_0$. Applying the OSSS inequality Proposition \ref{p:osss}, and using Claim \ref{c:algo} and the trivial bound $\Rev^+_m \le 1$, we have
\begin{align}
\nonumber \var[ \id_A ] & \le \frac{1}{2} \Big(  \Rev^+_m \Inf^+_m +  \! \!  \sum_{(i,m') \in \Z^d \times \N_0 : m' \le m-1}   \Rev_{i,m} \Inf_{i,m}  \Big) \\
\nonumber & \le  \frac{\Inf^+_m}{2} +  c_6  \Big( \frac{1}{R} \sum_{r = 1}^{\lceil R \rceil}  \theta_r(\ell) \Big) \sum_{m' = 0}^{m-1}  (2^{m'})^{d-1} \sum_{i \in \Z^d} \Inf_{i,m'}  \\
\label{e:sharp1} &   \le  \frac{\Inf^+_m}{2} +    c_7 \Big( \frac{1}{R} \sum_{r = 1}^{\lceil R \rceil}  \theta_r(\ell) \Big) \Big( \frac{d^+}{d\ell} \theta_R(\ell) \Big) \sum_{m' = 0}^{m-1}  (2^{m'})^{d-1 - \alpha/2}  
\end{align}
where in the final inequality we used Proposition \ref{p:russo}. On the other hand, since $m > m_0$,
\begin{equation}
\label{e:sharp2}
  \var[ \id_A ]  - \frac{\Inf^+_m}{2}   \ge  \frac{\var[ \id_A ]}{2}  .
  \end{equation}
Combining \eqref{e:sharp1} and \eqref{e:sharp2}, and using again that $\theta_R(\ell) \le \theta_1(\ell_0) < 1$, establishes the bound.

\smallskip
It remains to prove Claim \ref{c:algo}, which is very similar to claims appearing in \cite{dcrt19a, dcrt20} (and elsewhere). Choose a random integer $r \in \{1, 2, \ldots, \lceil R \rceil\}$, and notice that, for every box $C_i = i + [0,1]^d$, $i \in \Z^d$, the field $f|_{C_i}$ restricted to $C_i$ is determined by (i) $W^+_m$, and (ii) every $W_{i, m'}$, $i \in \Z^d$, $m' \le m-1$, such that $\text{proj}(B_{i,m'})$ is within distance $2^{m'}$ of $C_i$, where $\text{proj}(B_{i,m'})$ is the projection of $B_{i,m'}$ onto $\R^d$. Then by iteratively revealing $f|_{C_i}$ for a sequence of $C_i$, one can explore the connected components of $\{f \le 0\} \cap \Lambda_R$ that intersect $\partial \Lambda_r$, stopping once $A$ is determined. This reveals (i) $W^+_m$, and (ii) a subset of the $W_{i, m'}$, $i \in \Z^d$, $m' \le m-1$, such that $\{ B(\text{proj}(B_{i,m'}) ,2^{m'} + 1) \leftrightarrow  \partial \Lambda_r\}$, where $B(A, s)$ is the set of points within distance $s$ of $A$, and we use the convention that $\{A \leftrightarrow B\}$ occurs if $A$ and $B$ intersect. Averaging over $r$ and by stationarity, the revealment probabilities are at most
\[  \Rev_{i, m'}  \le \frac{2}{R} \sum_{r = 1}^{\lceil R \rceil}   \P_\ell[  \Lambda_{2^{m'+2}} \leftrightarrow  \partial \Lambda_r ]  . \]
Using that $\P[\cdot] \le 1$ and by the union bound, 
\begin{equation}
\label{e:algo1}
\sum_{r = 1}^{\lceil R \rceil}   \P_\ell[  \Lambda_{2^{m'+2}} \leftrightarrow  \partial \Lambda_r ] \le c_8     (2^{m'})^{d-1}  \sum_{r = 1}^{\lceil R \rceil}  \P_\ell[  \Lambda_{1/2} \leftrightarrow  \partial \Lambda_r   ]   ,
\end{equation}
 which gives the claim.
\end{proof}

\subsection{Analysis of the differential inequality}
We next show how \eqref{e:mens4} implies a phase transition with at least stretched-exponential decay of connectivity in the subcritical regime. We rely on the following lemma:

\begin{lemma}
\label{l:mens}
Let $[c_1,c_2] \subset \R$ be an interval, and let $(f_n)_{n \ge 1}$ be a sequence of non-decreasing continuous functions $f_n : [c_1,c_2] \to (0,1]$, which satisfy 
 \begin{equation}
\label{e:mens5}
f_{n+1}(x) \le f_n(x)  \quad \text{and} \quad \frac{d^+}{dx} f_n(x) \ge \frac{\kappa f_n(x)}{ \big( \frac{1}{n} \sum_{i = 1}^n f_i(x) \big)^{\gamma}  } \ , \quad n \ge 1, x \in [c_1,c_2],
 \end{equation}
 where $\kappa > 0$ and $\gamma \in (0,1]$ are constants. Define $f = \lim_{n \to \infty} f_n$ and $x_c = \inf\{ x : f(x) > 0 \}$. Then:
 \begin{itemize}
\item For every $x < x_c$ there exists a $c_3 > 0$ such that, for every $n \ge 1$,
\begin{equation}
\label{e:mens6}
 f_n(x) \le  e^{-c_3 n^\gamma  }.
 \end{equation}
\item For every $x > x_c$, 
\[ f(x) \ge  (\kappa  \gamma)^{1/\gamma} ( x - x_c)^{1/\gamma}  . \]
\end{itemize}
\end{lemma}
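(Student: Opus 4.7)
My plan is to handle the two items separately: item (2) by passing to the limit in the differential inequality to recover the ODE ``$\frac{df}{dx} \ge \kappa f^{1-\gamma}$'' on the supercritical side, and item (1) by a Menshikov-style bootstrap that exploits monotonicity in both $n$ and $x$.

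For item (2), the key observation is that the Cesaro average $S_n(x) := \frac{1}{n}\sum_{i=1}^n f_i(x)$ satisfies $S_n(x) \to f(x)$ as $n \to \infty$ (Cesaro mean of a convergent sequence), so $f_n/S_n \to 1$ pointwise on $(x_c, c_2]$. I would rewrite the Dini inequality via the chain rule as
\[ \frac{d^+}{dx} f_n^\gamma(x) \ge \kappa \gamma \bigl(f_n(x)/S_n(x)\bigr)^\gamma, \]
and integrate on any $[a,b] \subset (x_c, c_2]$ using the standard comparison $g(b) - g(a) \ge \int_a^b \underline{D}^+ g$, valid for continuous $g$. Since $f_n/S_n \le 1$, dominated convergence passes the limit through the integral, yielding $f^\gamma(b) - f^\gamma(a) \ge \kappa\gamma(b-a)$. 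Letting $a \downarrow x_c$ and using $f(a) \ge 0$ gives the claimed bound $f(b) \ge (\kappa\gamma)^{1/\gamma}(b-x_c)^{1/\gamma}$.

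For item (1), I would first integrate $(\log f_n)'(x) \ge \kappa S_n(x)^{-\gamma}$ over an interval $[x_0, x_1] \subset [c_1, x_c)$, using that $S_n$ is non-decreasing in $x$, to obtain
\[ f_n(x_0) \le \exp\bigl( -\kappa(x_1-x_0) S_n(x_1)^{-\gamma} \bigr). \]
Thus it suffices to show that $S_n(x_1) \le C/n$ for every $x_1 < x_c$, equivalently that $\sum_n f_n(x_1) < \infty$, since substituting back gives $f_n(x_0) \le e^{-cn^\gamma}$ with $c = \kappa(x_1 - x_0) C^{-\gamma}$.

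The main obstacle is proving this summability: only the qualitative statement $S_n(x_1) \to 0$ is available a priori. My plan is to run a Menshikov-style bootstrap over a chain $x_1 < x_1^{(1)} < \cdots < x_1^{(K)} < x_c$ of scales, with base case the qualitative decay $S_n(x_1^{(K)}) \to 0$, and iterative step
\[ S_n(x_1^{(k)}) \le \frac{1}{n} \sum_{i=1}^n \exp\bigl( -\kappa(x_1^{(k+1)}-x_1^{(k)}) S_i(x_1^{(k+1)})^{-\gamma} \bigr), \]
which, combined with the monotonicity of $i \mapsto S_i$, upgrades a decay rate of $S_n$ at scale $k{+}1$ to a strictly faster quantitative rate at scale $k$, passing successively through poly-logarithmic, polynomial, and eventually summable rates. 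Quantifying how many scales $K = K(\gamma)$ suffice and tracking the constants through the chain is the bulk of the technical work; this follows the Menshikov-style arguments in \cite{men86, dcrt19a, dcrt20}, adapted to the weaker power $\gamma \le 1$. Once summability is established, the preceding display closes the argument.
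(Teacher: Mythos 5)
Your treatment of item (2) is correct and amounts to a clean variant of the paper's argument: where the paper invokes Egorov's theorem to get $\frac{1}{n}\sum_{i\le n}f_i \le (1+\delta)f_n$ uniformly off a set of small measure, you use $f_n/S_n\le 1$ together with dominated convergence; both routes integrate $\frac{d^+}{dx}f_n^\gamma \ge \kappa\gamma\,(f_n/S_n)^\gamma$ against a \emph{continuous} minorant, which is the legitimate form of the Dini-derivative comparison (the blanket claim $g(b)-g(a)\ge\int_a^b \underline{D}^+g$ for arbitrary continuous $g$ is shakier, but you never actually need it in that generality). Your reduction of item (1) to the summability of $\sum_n f_n(x_1)$ at some $x_1<x_c$, via $f_n(x_0)\le \exp(-\kappa(x_1-x_0)S_n(x_1)^{-\gamma})$ and monotonicity of $S_n$ in $x$, is also exactly right and matches the paper's \eqref{e:mens7}.

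The gap is that this summability --- which is the entire content of the Menshikov step --- is neither proved nor correctly outlined. A finite chain of scales $x_1<x_1^{(1)}<\cdots<x_1^{(K)}$ with $K=K(\gamma)$ fixed cannot work: the base input at $x_1^{(K)}$ is only $S_n\to 0$ with \emph{no rate}, and one application of your iterative step turns a bound $S_i\le 1/\phi(i)$ into roughly $\exp(-c\,\phi(n)^\gamma)$, i.e.\ it exponentiates the rate function $\phi$. If $\phi$ grows arbitrarily slowly (say a $(K+5)$-fold iterated logarithm), then $K$ exponentiations still fall far short of poly-logarithmic decay, let alone summability; so no finite number of fixed scale separations upgrades purely qualitative decay to a quantitative rate. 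The paper's Step 1 instead runs an \emph{infinite} cascade with data-dependent, doubly-exponentially shrinking gaps $x_i-x_{i+1}=3^\gamma g_i^\gamma\log(1/g_i)$ (so that the total displacement converges and the cascade stays to the right of $x'$) and indices $n_{i+1}=n_i\lfloor 1/g_i\rfloor$, where $g_i=f_{n_i}(x_i)$; the self-improving relation $g_{i+1}\le g_i^2$ then converts mere smallness of $g_0$ into the polynomial bound $f_n(x')\le Cn^{-1/2}$, after which a short finite chain (two more integrations) yields summability and hence \eqref{e:mens6}. Without this cascade, or a genuine substitute for it, item (1) is not established.
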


\begin{proof}[Proof of Lemma \ref{l:mens}]
By multiplying $f_n$ by a constant, it suffices to consider the case $\kappa = 1$. We prove the two items separately.

\smallskip
\noindent \textit{The case $x < x_c$.} We adapt the analysis of \eqref{e:mens1} in \cite{men86} (see also \cite[Section 5.2]{gr99}); the idea is to start from the knowledge that $f_n(x) \to 0$ as $n \to \infty$, and use \eqref{e:mens5} to iteratively bootstrap this estimate for smaller~$x$. So fix some $x < x'  < x_c$. We proceed in two steps:

\smallskip
\noindent \textit{Step 1: Polynomial decay at $x'$.} By integrating \eqref{e:mens5} and using monotonicity we have
\begin{equation}
\label{e:mens7}
 f_n(x) \le f_n(y) e^{   - (y-x) / ( \frac{1}{n} \sum_{i = 1}^n f_i(y)  )^\gamma } 
 \end{equation}
 for every $x < y$ and $n \ge 1$. Now choose a $x_0 \in (x' ,x_c)$, a large integer $n_0 \ge 1$ be to determined later, and define sequences $(x_i)_{i \ge 1}$ and $(n_i)_{i \ge 1}$, decreasing and increasing respectively, via
 \[ x_{i+1} = x_i - 3^\gamma g_i^\gamma \log (1/g_i) \quad \text{and} \quad n_{i+1} = n_i \lfloor (1/g_i) \rfloor  \]
 where $g_i = f_{n_i}(x_i) \in (0,1]$ is non-increasing. In particular, since $f_n(x_0) \to 0$ as $n \to \infty$, we can make $g_0$ arbitrarily small by choosing $n_0$ sufficiently large. Now, since $f_n(x) \le 1$ and by monotonicity,
\[\frac{1}{n_{i+1}} \  \sum_{j = 1}^{n_{i+1}} f_j(x_i)  \le \frac{n_i}{n_{i+1}} + f_{n_i}(x_i) \le 1/(1/g_i-1) + g_i \le 2 g_i /(1-g_i)  , \]
and so applying \eqref{e:mens7} over $(x_{i+1},x_i)$ and using monotonicity again, we have
 \begin{align*}
  g_{i+1} = f_{n_{i+1}}(x_{i+1}) & \le f_{n_{i+1}} (x_i) e^{   - (x_i - x_{i+1}) / ( \frac{1}{n_{i+1}} \sum_{j = 1}^{n_{i+1}} f_j(x_i)  )^\gamma }  \\
  & \le  g_i e^{- 3^\gamma  2^{-\gamma} (1-g_i)^\gamma \log(1/g_i) }  \le g_i^2 
  \end{align*}
 where the final inequality holds provided $g_0 \le 1/3$. A consequence of $g_{i+1} \le g_i^2$ is that
 \[ \lim_{i \to \infty} x_i \ge x_0 - 3^\gamma  \sum_{i \ge 0 } g_i^\gamma \log (1/g_i)  \ge  x_0 - 3^\gamma   \sum_{i \ge 0 } (g_0^{\gamma/2})^{2^i}> x' \]
provided $g_0$ is sufficiently small. A second consequence is that 
\[ g_{i-1}^2  \le g_{i-1} g_{i-2} \ldots g_1 g_0^2 \le g_0 n_0 / n_i \]
where we used $g_{j+1} \le g_j^2$ in the first inequality, and $n_{j+1} \le n_j / g_j$ in the second. Combining these two observations and using monotonicity, for every $n_{i-1} \le n \le n_i$ we have
\[ f_n(x') \le f_{n_{i-1}}(x_{i-1}) = g_{i-1} \le (g_0 n_0)^{1/2}  n_i^{-1/2} \le (g_0 n_0)^{1/2}  n^{-1/2} .\]

\smallskip
\noindent \textit{Step 2: Stretched-exponential decay at $x$.} 
By the previous step,
\[  \frac{1}{n} \sum_{i = 1}^n f_i(x')  \le c_3 n^{-1/2}    \]
for some $c_3 > 0$ and all $n \ge 1$. Fixing a $x'' \in (x,x')$ and applying \eqref{e:mens7} over $(x'',x')$, we deduce that 
\[  f_n(x'') \le  e^{-c_4 n^{\gamma/2} } , \]
 and hence $   \sum_{i \ge 1} f_i(x'')  \le c_5$, for some $c_4,c_5 > 0$. Applying \eqref{e:mens7} once more over $(x,x'')$, we deduce \eqref{e:mens6}. 

\smallskip
\noindent \textit{The case $x > x_c$.} The analysis of \eqref{e:mens1} in \cite[Section 5.2]{gr99} does not go through if $\gamma < 1$ so we give a new argument valid for all $\gamma \in (0,1]$. Fix $x_c < x' < x$ and $\eps, \delta> 0$. By Egorov's theorem there exists a set $E_\eps$ of Lebesgue measure $\eps$ such that $f_n \to f$ uniformly on $[x',x] \setminus E_\eps$. Since $f$ is non-decreasing, and $f_n(y) \ge f(x') > 0$ for all $y \in [x',x]$, this implies we may choose $n_0 = n_0(x,x',\eps,\delta) > 0$ sufficiently large so that
\[ \frac{1}{n} \sum_{i = 1}^n f_i(y)  \le (1+\delta) f_n(y)  \ ,  \quad n \ge n_0 , \ y \in [x',x] \setminus E_\eps. \]
Combining with \eqref{e:mens5}, we have the inequality
\begin{equation}
\label{e:mens8}
 \frac{d^+}{dy} f_n(y) \ge  (1+\delta)^{-\gamma} f_n(y)^{1-\gamma}  \ , \quad  n \ge n_0 , \ y \in [x',x] \setminus E_\eps. 
 \end{equation}
Integrating \eqref{e:mens8} over $[x',x] \setminus E_\eps$, and since both $f_n$ and $y \mapsto f_n(y)^{1-\gamma}$ are non-decreasing (recall that $\gamma \in (0,1]$), we deduce that
\[ f_n(x) - f_n(x') \ge (1+\delta)^{-1} \gamma^{1/\gamma}(x-x' - \eps)^{1/\gamma} \ , \quad n \ge n_0 . \]
Taking $n \to \infty$ and then $\eps,\delta \to 0$ shows that
\[f(x) - f(x') \ge   \gamma^{1/\gamma}(x-x')^{1/\gamma}  ,   \]
and taking $x' \downarrow x_c$, and since $f \ge 0$, we conclude the result.
\end{proof}

\begin{remark}
\label{r:alpha0}
The proof of Lemma \ref{l:mens} can be adapted to analyse the general inequality
\[ \frac{d^+}{dx} f_n(x) \ge \frac{\kappa f_n(x)}{ \varphi( \frac{1}{n} \sum_{i = 1}^n f_i(x) \big)  }   , \]
 at least if $\varphi(x) \ge (\log (1/x))^{-(1+\delta)}$ for some $\delta > 0$ and sufficiently small $x > 0$. In that case the decay is faster-than-polynomial in the regime $x < x_c$, but not always stretched-exponential. This could be useful to study models with slower-than-polynomial correlation decay (i.e.\ the $\alpha = 0$ case of Theorem \ref{t:main1}).
\end{remark}

\begin{remark}
The conclusion of Lemma \ref{l:mens} actually holds for all $\gamma > 0$. Although our proof of the second item used that $\gamma \in (0,1]$, the case $\gamma \ge 1$ can be treated using a similar argument to in \cite[Section 5.2]{gr99}.

\smallskip
In \cite{dcrt19a} a new argument was introduced to deduce sharpness from~\eqref{e:mens3} in the case $\gamma = 1$. We were unable to adapt this to $\gamma < 1$, although it would be interesting to do so.
\end{remark}

\subsection{Bootstrapping and non-triviality}

At this point we could already combine Proposition \ref{p:mens} and Lemma \ref{l:mens} to conclude the proof of the second item of Theorem \ref{t:main1} and sharpness in the sense of Corollary~\ref{c:main1}, as well as the first item in the case $\alpha > 2(d-1)$. However to obtain the full strength of the first item of Theorem \ref{t:main1} in the case $\alpha \le 2(d-1)$, as well as Proposition \ref{p:nontriv}, we need an additional bootstrapping step.

\smallskip
We will use the following general result from \cite{ms22}, whose proof uses sprinkled bootstrapping arguments pioneered in \cite{rs13,pr15,pt15}. Recall the critical parameters $\ell_c$ and $\ell^\ast_c$ (from Corollary~\ref{c:main1}); here we extend their definition to general random fields $f$ on $\R^d$ or $\Z^d$.

\begin{proposition}[See {\cite[Proposition 4.1]{ms22}}]
\label{p:boot}
Let $f$ be a continuous stationary random field on $\R^d$ such that, for every $R \ge 1$, 
\[ f \stackrel{d}{=} f_R + g_R, \]
where $f_R$ is a stationary $R$-range dependent field on $\R^d$, and $g_R$ is a continuous field on $\R^d$ satisfying, for constants $\delta, \psi, c_1> 0 $, 
\begin{equation}
\label{e:tail}
 \P\big[ \sup_{x \in \Lambda_R} |g_R(x)| \ge  (\log R)^{-(1+\delta)} \big] \le  e^{- c_1R^\psi } \ , \quad R \ge 2 . 
 \end{equation}
 Then
\[  \ell^\ast_c(f) > - \infty . \]
Moreover, for every $\ell < \ell^\ast_c$ and $\psi' \in (0, \psi) \cap (0, 1]$, there exists a $c_2> 0$ such that
\begin{equation}
\label{e:armbound}
 \P[ \Lambda_{1/2} \leftrightarrow \partial \Lambda_R ] \le e^{- c_2 R^{\psi'}} \ , \quad R \ge 1.
 \end{equation}
The conclusion also holds if `continuous field on $\R^d$' is replaced by `field on~$\Z^d$'.
\end{proposition}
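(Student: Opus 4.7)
\textbf{Proof plan for Proposition \ref{p:boot}.}

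The plan is to use a \emph{sprinkled renormalisation} scheme, exploiting the decomposition $f = f_R + g_R$ at every dyadic scale. The key observation is: if $|g_R| \le \eta$ uniformly on $\Lambda_R$, then any path in $\{f + \ell \ge 0\}$ is automatically a path in $\{f_R + (\ell + \eta) \ge 0\}$. Thus, up to an error of order $\P[\|g_R\|_{L^\infty(\Lambda_R)} > \eta]$ and a ``sprinkling'' shift of $\eta$ in the level, we may replace $f$ by the finite-range field $f_R$. Setting $\eta_R = (\log R)^{-(1+\delta)}$, the tail hypothesis \eqref{e:tail} makes the sprinkling error super-polynomially small, while the choice of $\delta > 0$ ensures $\sum_k \eta_{R_k} < \infty$ along dyadic scales $R_k = 2^k R_0$, so the total sprinkling budget is bounded.

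For the base case I would first show that for $\ell$ sufficiently negative, the crossing probability $\P_\ell[\Lambda_{R_0} \leftrightarrow \partial \Lambda_{2R_0}]$ can be made arbitrarily small at some fixed scale $R_0$. This uses the decomposition at a single scale together with Gaussian tail estimates: since $\var f_{R_0}(0) \le \var f(0) < \infty$, the pointwise density $\P[f_{R_0}(x) + \ell + \eta_{R_0} \ge 0]$ decays like $e^{-c\ell^2}$, and the crossing probability is controlled by a union bound over an $R_0$-dependent number of potential ``seed'' points for a connected crossing. Choosing $\ell$ negative enough drives this below any desired threshold, which already establishes $\ell_c^\ast > -\infty$.

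The bootstrapping step upgrades the single-scale smallness to stretched-exponential decay at all scales. At scale $R_k$, I would tile the annulus $\Lambda_{2R_k} \setminus \Lambda_{R_k}$ with translates of $\Lambda_{R_{k-1}}$-to-$\partial \Lambda_{2R_{k-1}}$ crossings; under the finite-range field $f_{R_k}$, crossings in boxes separated by more than $R_k$ are independent, and a one-arm crossing forces a growing number $\sim 2^{k(d-1)}$ of disjoint such sub-crossings (via a block construction as in \cite{rs13,pr15,pt15}). This yields a recursive inequality of the shape
\[
p_{k+1}(\ell) \le C \, p_k(\ell + \eta_{R_k})^{c \cdot 2^{d-1}} + e^{-c R_k^{\psi}},
\]
where $p_k(\ell)$ denotes the crossing probability at scale $R_k$. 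Since $\sum_k \eta_{R_k}$ converges, one can afford all the sprinkling losses within an arbitrarily small overall shift in $\ell$, and iterating this inequality (while absorbing the additive tail errors, which decay faster than any polynomial) yields the target bound \eqref{e:armbound}.

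The main obstacle is balancing the sprinkling and the tail error through the recursion. One must choose the sequence of sprinklings summable but not decaying too fast, so that at each step the tail error $e^{-cR_k^\psi}$ remains dominant over the polynomial factors generated by the union bounds, while the aggregate shift $\sum_k \eta_{R_k}$ is smaller than the gap $\ell^\ast_c - \ell$. Tracking how the exponent $\psi'$ degrades under the recursion (losing a factor from the interplay of the dyadic scaling and the sprinkling rate) is the technically delicate point, and this is why the final exponent $\psi' \in (0,\psi) \cap (0,1]$ is slightly worse than the input $\psi$.
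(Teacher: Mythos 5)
Your plan is essentially the argument behind the paper's own treatment of Proposition \ref{p:boot}: the paper does not reprove it but cites it as a special case of \cite[Proposition 4.1]{ms22}, whose proof is exactly this sprinkled bootstrapping scheme pioneered in \cite{rs13,pr15,pt15} --- seed smallness of the crossing probability at a single scale (for $\ell$ very negative to get $\ell^\ast_c>-\infty$, and for general $\ell<\ell^\ast_c$ directly from the definition of $\ell^\ast_c$), then iterate a recursion of the form $p_{k+1}\le C\,p_k(\ell+\eta_{R_k})^2+e^{-cR_k^{\psi}}$ along dyadic scales using the $R_k$-range independence of $f_{R_k}$, the total sprinkling cost $\sum_k(\log R_k)^{-(1+\delta)}$ being summable precisely because $\delta>0$. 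The one caveat is that your base case should not appeal to Gaussian pointwise tails or to independence of $f_{R_0}$ and $g_{R_0}$ (the proposition is stated for a general continuous stationary field with only the identity in law $f\stackrel{d}{=}f_R+g_R$); it suffices to note that $\P_\ell[\Lambda_{R_0}\leftrightarrow\partial\Lambda_{2R_0}]\le\P[\sup_{x\in\Lambda_{2R_0}}f(x)\ge-\ell]\to0$ as $\ell\to-\infty$, since the supremum of a continuous field over a compact box is a.s.\ finite.
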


\begin{proof}
For continuous fields this is a special case of \cite[Proposition 4.1]{ms22}, and the proof for discrete fields is identical.
\end{proof}

Using duality we can obtain from \eqref{e:armbound} that $\ell_c < \infty$:

\begin{corollary}
Let $f$ be as in Proposition \ref{p:boot} and suppose that, in addition, $f$ and $-f$ are equal in law. Then $\ell_c(f) < \infty$.
\end{corollary}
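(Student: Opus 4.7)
The plan is to apply Proposition~\ref{p:boot} to the field $g := -f$, and then convert the resulting subcritical arm decay for $g$ into percolation for super-level sets of $f$ via a standard Peierls-type duality argument.

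First, I would verify that $g = -f$ satisfies the hypotheses of Proposition~\ref{p:boot}. The decomposition $g \stackrel{d}{=} -f_R + (-g_R)$ has first summand stationary and $R$-range dependent, and $|-g_R| = |g_R|$, so the tail bound~\eqref{e:tail} is preserved; the symmetry assumption also gives $g \stackrel{d}{=} f$. Applying Proposition~\ref{p:boot} to $g$ with some fixed $\psi' \in (0,\psi) \cap (0,1]$ yields a constant $\ell_1 > -\infty$ such that, for every $\ell < \ell_1$,
\[ \P[\Lambda_{1/2} \leftrightarrow \partial \Lambda_R \text{ in } \{g + \ell \ge 0\}] \le e^{-c R^{\psi'}} , \quad R \ge 1. \]
Since $\{g + \ell \ge 0\} = \{f \le \ell\}$, setting $L = -\ell$ gives a constant $L_0 := -\ell_1 < \infty$ such that, for every $L > L_0$, the sub-level set $\{f + L \le 0\}$ satisfies
\[ \P[\Lambda_{1/2} \leftrightarrow \partial \Lambda_R \text{ in } \{f + L \le 0\}] \le e^{-c R^{\psi'}} , \quad R \ge 1. \]

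Next, I would deduce $\ell_c(f) \le L < \infty$ for any such $L$ via a Peierls argument. The key topological input is that if $\Lambda_{1/2}$ is not connected to $\partial \Lambda_R$ in $\{f + L \ge 0\}$, then a connected component of $\{f + L < 0\}$ must separate $\Lambda_{1/2}$ from $\partial \Lambda_R$ inside $\Lambda_R$; such a component necessarily contains a point within unit distance of $\Lambda_{1/2}$ and extends at least to $\partial \Lambda_{R-1}$. A union bound over the $O(1)$ unit neighbourhoods of $\Lambda_{1/2}$, combined with stationarity and the sub-level-set decay, then gives
\[ \P[\Lambda_{1/2} \not\leftrightarrow \partial \Lambda_R \text{ in } \{f + L \ge 0\}] \le C_d \, e^{-c (R/2)^{\psi'}} \to 0 \quad \text{as } R \to \infty. \]
Hence $\theta(L) = \lim_R \P[\Lambda_{1/2} \leftrightarrow \partial \Lambda_R \text{ in } \{f + L \ge 0\}] = 1 > 0$, so $\ell_c(f) \le L < \infty$.

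The main subtlety is the Peierls-type topological step. In the discrete setting on $\Z^d$ this is standard and follows from the duality between nearest-neighbour and $\ast$-neighbour connectivity, with the extra combinatorial factor from $\ast$-adjacency easily absorbed by the stretched-exponential decay. In the continuous setting on $\R^d$ one can argue directly using continuity of $f$, so that connected components of $\{f + L < 0\}$ are open and any topological separator between $\Lambda_{1/2}$ and $\partial \Lambda_R$ lies in the closure of a single such component; alternatively, one can discretise at a fixed small mesh and reduce to the lattice case. In either case, the argument is routine given the strong arm decay provided by Proposition~\ref{p:boot}.
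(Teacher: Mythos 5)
Your reduction to sub-level-set arm decay (applying Proposition~\ref{p:boot} together with the symmetry $f \equlaw -f$) matches the paper's first step, but your Peierls-type conclusion contains a genuine error.

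The claim that a connected component of $\{f + L < 0\}$ separating $\Lambda_{1/2}$ from $\partial\Lambda_R$ ``necessarily contains a point within unit distance of $\Lambda_{1/2}$ and extends at least to $\partial\Lambda_{R-1}$'' is false. A separating circuit (in $d=2$) or surface (in $d \ge 3$) can be entirely contained in, say, $\Lambda_{10}$: it surrounds $\Lambda_{1/2}$ but does not reach out to $\partial\Lambda_{R-1}$. Consequently, the non-connection event does not imply a long sub-level-set arm, and your display $\P[\Lambda_{1/2} \not\leftrightarrow \partial\Lambda_R \text{ in }\{f+L\ge 0\}] \le C_d e^{-c(R/2)^{\psi'}} \to 0$ cannot hold; indeed, $\theta(L) = 1$ would be absurd (even for highly supercritical Bernoulli percolation $\theta < 1$). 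To make a genuine Peierls argument work one would have to sum over separating surfaces of every scale $r \in [1,R]$, and one would then need the probability that a \emph{fixed} surface of size $n$ lies in $\{f + L < 0\}$ to decay like $e^{-cn}$; this does not follow from arm decay and is in fact hopeless for strongly correlated fields, where such probabilities decay only stretched-exponentially. Your remark that the ``extra combinatorial factor from $\ast$-adjacency'' is ``easily absorbed'' glosses over this: it is precisely the step that fails.

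The paper avoids this by a block/renormalisation argument rather than Peierls. It restricts $f$ to a two-dimensional slice $\R^2 \subset \R^d$, considers dyadic rectangle crossing events $E_n, F_n$ for $\{f + \ell \le 0\}$, observes that $\cap_{n \ge n_0} (E_n \cap F_n)$ forces an unbounded component, and bounds $\P_\ell[E_n^c]$ by planar duality: failure of the $\{f+\ell\le 0\}$-crossing forces a transverse $\{f+\ell > 0\}$-crossing, whose probability is bounded via the union bound by $c\,2^{n+1}\,\P_{-\ell}[\Lambda_{1/2}\leftrightarrow\partial\Lambda_{2^n}]$, which is summable by Proposition~\ref{p:boot} once $-\ell < \ell^\ast_c$; Borel--Cantelli then yields positive probability of an infinite cluster. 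In the discrete case the same duality requires $\ast$-connectivity, so the paper introduces a critical parameter $\hat{\ell}^\ast_c$ defined via $\ast$-crossings and notes that the proof of Proposition~\ref{p:boot} adapts to give $\ast$-arm decay. This is a second subtlety your proposal misses: on $\Z^2$, non-crossing by $\{f+\ell\le 0\}$ gives a $\ast$-crossing by $\{f+\ell > 0\}$, not an ordinary one, so the arm bound must be upgraded to its $\ast$-version.
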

\begin{proof}
Let us first consider continuous fields. We aim to show that $\ell_c \le -\ell^\ast_c < \infty$. Choose some $\ell > -\ell^\ast_c$, and let $E_n$ and $F_n$ be respectively the events that $\{f|_{\R^2} \le 0\} \cap ([0,2^{n+1}] \times [0, 2^n])$ contains a path that crosses $[0,2^{n+1}] \times [0, 2^n])$ from left-to-right, and $\{f|_{\R^2} \le 0\} \cap ([0,2^n] \times [0, 2^{n+1}])$ contains a path that crosses $[0,2^n] \times [0, 2^{n+1}]$ from top-to-bottom. For any $n_0 > 0$,
\[ \cap_{n \ge n_0} E_n \cap F_n \] 
implies an infinite component in $\{f \le 0\} \cap \R^2$, and so by the Borel-Cantelli lemma it remains to prove that $\max\{ \P_\ell[  E_n^c],\P_\ell[ F_n^c]  \}$ is summable over $n \ge 1$, since that will imply that $\ell > \ell_c$ and hence $\ell_c \le -\ell^\ast_c$.

\smallskip
Since $f$ and $-f$ are equal in law and by the union bound,
\begin{equation}
\label{e:nontriv1}
  \P_\ell[ E_n^c] \le  c_1 2^{n+1} \P_{-\ell}[  \Lambda_{1/2} \longleftrightarrow \partial \Lambda_{2^n} ] 
  \end{equation}
and similarly for $F_n$. Since $-\ell < \ell^\ast_c$, and by the bound \eqref{e:boot}, 
\[\max\{ \P_\ell[  E_n^c],\P_\ell[ F_n^c]  \} \le c_2 2^{n+1} e^{- c_3 (\log 2^n)^{1+\delta}}  , \]
which gives the required summability.

\smallskip
In the discrete case one needs to proceed slightly differently due to a lack of self-duality. Specifically, one defines the critical parameter 
\[ \hat{\ell}^\ast_c  = \inf \big\{ \ell : \liminf_{R \to \infty} \P_\ell[ \Lambda_R \stackrel{\ast}{\leftrightarrow} \partial \Lambda_{2R} ]  > 0 \big\}   , \]
where $\{A  \stackrel{\ast}{\leftrightarrow} B\}$ is the event that $A$ and $B$ are `$\ast$-connected', i.e.\ connected by a path of vertices in $\Z^d$ within sup-distance $1$. Then the proof of \cite[Proposition 4.1]{ms22} adapts immediately to show that, for sufficiently small $\ell$, any $\psi \in (0, \psi) \cap (0, 1]$, and a constant $c_3 > 0$,
 \begin{equation}
\label{e:boot}
 \P_{\ell}[  \Lambda_R \stackrel{\ast}{\leftrightarrow} \partial \Lambda_{2R}   ]   \le  e^{- c_3 R^{\psi'}} \ , \quad R \ge 1.
 \end{equation}
This shows in particular that $\hat{\ell}^\ast_c > -\infty$, and it remains to show that $\ell_c \le -\hat{\ell}^\ast_c$. So choose some $\ell > -\hat{\ell}^\ast_c$, and let $E_n$ and $F_n$ be respectively the events that $\{f|_{\Z^2} \le 0\} \cap ([0,2^{n+1}] \times [0, 2^n])$ contains a path that crosses $[0,2^{n+1}] \times [0, 2^n])$ from left-to-right, and $\{f|_{\Z^2} \le 0\} \cap ([0,2^n] \times [0, 2^{n+1}])$ contains a path that crosses $[0,2^n] \times [0, 2^{n+1}]$ from top-to-bottom. For any $n_0 > 0$,
\[ \cap_{n \ge n_0} E_n \cap F_n \] 
implies an infinite component in $\{f \le 0\}$. On the other hand, since $f$ and $-f$ are equal in law and by the union bound,
\[  \P_\ell[ E_n^c] \le  2^{n+1} \P_{-\ell}[  \Lambda_{1/2} \stackrel{\ast}{\longleftrightarrow} \partial \Lambda_{2^n} ] \]
and similarly for $F_n$. By the Borel-Cantelli lemma, $\max\{ \P_\ell[  E_n^c],\P_\ell[ F_n^c]  \}$ is summable, which implies $\ell > \ell_c$ and hence $\ell_c \le -\hat{\ell}^\ast_c$.
 \end{proof}

\begin{remark}
\label{r:supercrit}
Using similar arguments one can actually show that there exists a $\tilde{\ell}_c < \infty$ such that if $\ell > \tilde{\ell}_c$ the model is in a `strongly supercritical' regime, see e.g.\ \cite{rs13, drs14, dgrs20, cn21}. However, unlike the proof of sharpness for the GFF in \cite{dgrs20}, our approach does \textit{not} allow us to conclude the supercritical analogue of sharpness $\tilde{\ell} = \ell_c$, which remains an important open problem for all models except the GFF.
\end{remark}

\subsection{Conclusion of the proof}
We are now ready to conclude the proof of sharpness:

\begin{proof}[Proof of Proposition \ref{p:nontriv} and Theorem \ref{t:main1}]
We first claim that $f \in \F$ satisfies the assumptions of Proposition~\ref{p:boot} for any $\psi \in (0,\alpha)$ and $c,\delta > 0$. Recall the representation $f = q \star_1 W$. For $R > 0$, $f$ can be decomposed as $f = f_R + g_R$, where
\[ f_R = \int_{\R^d \times (0, R]} q(x-y, t) dW(y,t)   \quad \text{and} \quad  g_R =   \int_{\R^d \times (R,  \infty)} q(x-y, t) dW(y,t) .\]
Since $q(x, t)$ is supported on $\{ |x| \le t/2\}$, $f_R$ is $R$-range dependent, and it remains to show that $g_R$ satisfies the tail bound \eqref{e:tail}.

\smallskip
We treat the case of continuous and discrete fields slightly differently. In the continuous case, consider the field $h(x) = R^{-\alpha/2} g_R(x)$. By the assumption on $q$ in \eqref{e:fdecay}, and since $\partial^k q \in L^2(\R^d \times \R^+)$ by Assumption \ref{a:main}, for all $R \ge 1$ and every multi-index $k$ with $|k| \le 1$,
\[ \var[\partial^k h(0)] =  R^{\alpha \id_{k=0}} \int_{\R^d \times (R, \infty)} (\partial^k q)^2(x-y, t)  \, dx dt \le c_1  \]
for a constant $c_2  > 0$. Therefore by Kolmogorov's theorem (see \cite[Appendix A.9]{ns16}) 
\[  \E \Big[ \sup_{x \in \Lambda_1} h(x) \Big]   \le  c_2  .   \]
By rescaling and applying the BTIS theorem (see \cite[Theorem 2.9]{aw09}), for any $c , \delta > 0$,
\begin{align*}
 \P \Big[ \sup_{x \in \Lambda_1} g_R(x) \ge    (\log R)^{-(1+\delta)}    \Big]  & =  \P \Big[ \sup_{x \in \Lambda_1} h(x) \ge    (\log R)^{-(1+\delta)} R^{\alpha/2}   \Big] \\ 
 & \le    e^{-  c_3 R^\alpha (\log R)^{-2(1+\delta)}}    \le e^{- c_4 R^{-\psi}} 
 \end{align*}
for constants $c_3, c_4> 0$ and sufficiently large $R$. Eq.\ \eqref{e:tail} then follows from union bound.

\smallskip
The case of discrete fields is similar but simpler: the assumption $ \eqref{e:fdecay}$ directly implies that   $\var[g_R(0)] \le c_5 R^{-\alpha}$, and so
\[  \P \Big[  g_R(0) \ge    (\log R)^{-(1+\delta)}    \Big]   \le    e^{-  c_6 R^\alpha (\log R)^{-2(1+\delta)}}    \le e^{- c_7 R^{-\psi}}  \] for sufficiently large $R$, and then \eqref{e:tail} again follows from union bound.

\smallskip
Proposition \ref{p:nontriv} is then an immediate consequence of Proposition \ref{p:boot}. Moreover, for any $\ell_0 > 0$ such that $-\ell_0 < \ell_c < \ell_0$, by Proposition \ref{p:mens} and item (4) of Corollary \ref{c:cont} the assumptions of Lemma \ref{l:mens} are satisfied for the function $\theta_R(\ell)$ restricted to $R \in \N$ and $\ell \in [\ell_0',\ell_0]$. From the conclusion of Lemma \ref{l:mens} we deduce that $\ell^\ast_c = \ell_c$ (see the proof of Corollary \ref{c:main1}), and Theorem~\ref{t:main1} then follows by a second application of Proposition~\ref{p:boot} and monotonicity.
\end{proof}

\subsection{Proof of the Russo-type inequality}
\label{s:russo}
To conclude the section we prove Proposition~\ref{p:russo}. The main ingredient is the following statement of Gaussian isoperimetry. For a set $E \subset\mathbb{R}^n$, let
\[ E^{+\varepsilon} := \{ x \in \mathbb{R}^n : \text{ there exists } w \in E \text{ s.t. } \|x-w\|_{L^2} \le \varepsilon \} \]
denote the $\varepsilon$-thickening (in $L^2$) of $E$.

\begin{proposition}[Gaussian isoperimetry; see {\cite[Proposition 4.2]{dm21}}]
\label{p:iso}
There exists a universal constant $c > 0$ such that, for every $n \in \N$, every measurable $E \subset\mathbb{R}^n$, and all $\varepsilon \ge 0$,
\[ \mathbb{P}[X \in E^{+\varepsilon} \setminus E] \ge   \sqrt{\frac{2}{\pi}}    \mathbb{P}[X \in E] (1 - \mathbb{P}[X \in E]) \varepsilon    -  c \varepsilon^2  ,  \]
where $X$ is an $n$-dimensional standard Gaussian vector.
\end{proposition}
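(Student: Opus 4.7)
The plan is to reduce the statement to the classical Gaussian isoperimetric inequality of Sudakov--Tsirelson and Borell and then Taylor expand. Let $\gamma_n$ denote the standard Gaussian measure on $\R^n$, and write $p=\gamma_n(E)\in[0,1]$ and $a=\Phi^{-1}(p)$, where $\Phi$ and $\varphi$ are the standard normal CDF and density. The Gaussian isoperimetric inequality asserts
\[
\gamma_n(E^{+\varepsilon})\ \ge\ \Phi(a+\varepsilon),
\]
which rearranges to
\[
\P[X\in E^{+\varepsilon}\setminus E]\ \ge\ F(\varepsilon) \qquad\text{where }F(\varepsilon):=\Phi(a+\varepsilon)-\Phi(a).
\]

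I will then Taylor expand $F$ at $\varepsilon=0$. One computes $F(0)=0$, $F'(0)=\varphi(a)$, and $F''(\varepsilon)=-(a+\varepsilon)\varphi(a+\varepsilon)$. Since $\sup_{x\in\R}|x|\varphi(x)=c_0<\infty$ (attained at $x=\pm 1$), we have $|F''(\varepsilon)|\le c_0$ uniformly in $a$ and $\varepsilon$, so the second-order Taylor inequality with remainder gives
\[
F(\varepsilon)\ \ge\ \varphi(a)\,\varepsilon\ -\ \tfrac{c_0}{2}\varepsilon^{2}.
\]
Hence it remains to verify the pointwise bound
\begin{equation}\label{e:phi-bound}
\varphi(a)\ \ge\ \sqrt{\tfrac{2}{\pi}}\,\Phi(a)\Phi(-a)\qquad \text{for all }a\in\R,
\end{equation}
since then $F(\varepsilon)\ge\sqrt{2/\pi}\,p(1-p)\,\varepsilon-(c_0/2)\varepsilon^{2}$, which is exactly the desired inequality with $c=c_0/2$.

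The step I expect to be the main (minor) obstacle is \eqref{e:phi-bound}, since the linear coefficient $\varphi(a)$ degenerates like $e^{-a^2/2}$ while the product $p(1-p)$ degenerates only like $|a|^{-1}e^{-a^2/2}$ via Mills' ratio, so some care is needed. The route I would take is the following: by the symmetry $a\leftrightarrow -a$ it suffices to treat $a\ge 0$; then $\Phi(a)\le 1$, so it is enough to show $\Phi(-a)\le\sqrt{\pi/2}\,\varphi(a)=\tfrac12 e^{-a^2/2}$. This is the classical sub-Gaussian tail bound for the Gaussian, which follows (for $a\ge 0$) from
\[
\Phi(-a)\ =\ \int_a^\infty\varphi(s)\,ds\ \le\ \tfrac12\int_\R e^{-s^2/2-a^2/2}\tfrac{ds}{\sqrt{2\pi}}\cdot \text{(Chernoff-type estimate)}
\]
or more directly from the Komatsu/Chernoff bound $\P[Z\ge a]\le\tfrac12 e^{-a^2/2}$, which is standard. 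Combining \eqref{e:phi-bound} with the Taylor estimate above completes the proof, with the universal constant $c:=c_0/2=\sup_x|x|\varphi(x)/2$.
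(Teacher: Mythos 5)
Your proposal is correct, and it follows the standard route: the paper itself does not prove this proposition but cites it from \cite{dm21}, and the argument there is essentially the one you give (Gaussian isoperimetric inequality, second-order Taylor bound on $\varepsilon\mapsto\Phi(a+\varepsilon)$ with the uniform bound $|x|\varphi(x)\le\varphi(1)$, and the elementary lower bound on the isoperimetric profile $\varphi(a)\ge\sqrt{2/\pi}\,\Phi(a)\Phi(-a)$). Two small points to tidy up. First, when $\P[X\in E]\in\{0,1\}$ the quantile $a=\Phi^{-1}(p)$ is infinite and the Taylor expansion does not literally apply, but the statement is then trivial since the right-hand side is $\le 0$; this case should be dispatched at the outset. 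Second, the displayed ``Chernoff-type'' line is garbled as written: the clean justification of $\Phi(-a)\le\tfrac12 e^{-a^2/2}$ for $a\ge 0$ is the substitution $s=a+u$ together with $e^{-(a+u)^2/2}\le e^{-a^2/2}e^{-u^2/2}$ for $a,u\ge 0$, which gives $\Phi(-a)\le e^{-a^2/2}\int_0^\infty\varphi(u)\,du=\tfrac12 e^{-a^2/2}$; the plain Chernoff bound only yields $e^{-a^2/2}$ without the factor $\tfrac12$, which would not quite suffice for the constant $\sqrt{2/\pi}$. With these repairs the proof is complete, with $c=\tfrac12\varphi(1)$.
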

 
\begin{proof}[Proof of Proposition \ref{p:russo}]
In the proof $c_i > 0$ will denote constants that depend neither on the event $A$ nor on $m \in \N_0$.

\smallskip
Let us first show that
\begin{equation}
\label{e:russo1}
    \frac{d^+}{d\ell} \P[f + \ell  \in A ] \bigg|_{\ell = 0}  \ge  c_1 (2^m)^{\alpha/2} \Inf^+_{m}  ,
    \end{equation}
Recall the definitions of $B^+_m$, $W_m^+$, and $\mathcal{F}_m^+$ from Section \ref{s:prelim}. Define $f_m = q \star_1 W_m^+$, let $f'_m$ denote an independent copy of $f_m$, and define $h_m = f - f_m$. We claim that, $\mathcal{F}_m^+$-almost surely,
\begin{equation}
\label{e:rusfin}
\frac{d^+}{d \ell} \mathbb{P}\big[f_m + h_m +  \ell \in A \big| \mathcal{F}_m^+ \big] \Big|_{\ell = 0}   \ge   c_2   (2^m)^{\alpha/2}   \mathbb{P}\big[ \id_{\{f_m + h_m  \in A\}} \neq \id_{\{f'_m + h_m \in A \}} \big| \mathcal{F}_m^+ \big]  .
\end{equation}
This concludes the proof of \eqref{e:russo1} since
\begin{align*}
 \frac{d^+}{d \ell} \mathbb{P}[f_m + \ell  \in A]  \Big|_{\ell = 0} & \ge  \E \Big[ \frac{d^+}{d \ell} \mathbb{P}[f_m + h_m +  \ell  \in A | \mathcal{F}_m^+ ] \Big|_{\ell = 0}  \Big]  \\
 & \ge c_2  (2^m)^{\alpha/2} \E \big[      \mathbb{P}[ \id_{\{f_m + h_m  \in A\}} \neq \id_{\{f'_m + h_m \in A \}} | \mathcal{F}_m^+  ]  \big] \\
 & =:  (c_2/2)  (2^m)^{\alpha/2} \Inf_m^+ 
 \end{align*}
where the first inequality is Fatou's lemma, and the second inequality is by \eqref{e:rusfin}.

\smallskip
So let us prove \eqref{e:rusfin}. Henceforth we condition on $\mathcal{F}_m^+$ and drop it from the notation. Let $(\varphi_j)_{j \in \N}$ be an orthonormal basis of $L^2(B_m^+)$, and let $Z = (Z_j)_{j \in \N}$ be a sequence of i.i.d.\ standard Gaussians. Then
\begin{equation}
\label{e:orthodecomp}
 f^n_m := \sum_{j \ge 1}^n Z_j (q \star_1 \varphi_j) \Rightarrow f_m ,  
 \end{equation}
in the sense of finite-dimensional distributions, and also in law in the $C^0(\R^d)$-on-compacts topology if $f$ is continuous (see Lemma \ref{l:c0conv}). Fixing $n \in \mathbb{N}$ and viewing $\{f_m^n + h_m \in A\}$ as a Borel set $E$ in the $n$-dimensional Gaussian space generated by $Z^n = (Z_j)_{j \le n}$, by Proposition~\ref{p:iso}
\begin{equation}
\label{e:rus2}
   \mathbb{P}[Z^n \in E^{+\varepsilon} \setminus E]   \ge   c_3  \varepsilon   \mathbb{P}[Z^n \in E] (1 - \mathbb{P}[Z^n \in E])  -  c_4 \varepsilon^2  
   \end{equation}
for some $c_3, c_4 > 0$ and every $\varepsilon \ge 0$. Consider $w  = (w_j) \in \mathbb{R}^n$ such that $\| w \|_{L^2} = \varepsilon > 0$. By the Cauchy-Schwarz inequality, for every $x \in \R^d$,
\begin{align*}
   \sum_{j \le n} w_j  (q \star_1   \varphi_j)(x)   =    \Big( q \star_1 \Big( \sum_{j \le n} w_j    \varphi_i \Big) \Big)(x)  & \le \|q\|_{L^2(\R^d \times [2^{m-1} \id_{m > 0},\infty))} \times \Big\| \sum_{j \le n} w_j \varphi_j \Big\|_{L^2}\\
  & \le   c_5 (2^m)^{-\alpha/2}  \|w\|_{L^2} = c_5 (2^m)^{-\alpha/2} \varepsilon ,
  \end{align*}
  where the last inequality used \eqref{e:fdecay} and the fact that $\varphi_j$ is an orthonormal basis. We therefore have, for every $x \in \R^d$,
\begin{align*}
&   \sup_{w : \|w\|_{L^2} \le \varepsilon}  \, \sum_{i \le n} (Z_j +w_j )( q \star_1 \varphi_j )(x)  - f^n_m(x) \\
  & \qquad =   \sup_{w : \|w\|_{L^2} \le \varepsilon}  \,   \sum_{j \le n} w_j  (q \star_1   \varphi_j)(x)  \le  c_5 (2^m)^{-\alpha/2} \eps . 
  \end{align*}
Therefore, since $A$ is increasing and using \eqref{e:rus2}, for every $\eps > 0$,
\begin{align}
  \label{e:rusfin2} 
 &   \mathbb{P}[f^n_m +  h_m + c_5 (2^m)^{-\alpha/2}  \varepsilon  \in A]  - \mathbb{P}[ f^n_m + h_m \in A ]  \\
 \nonumber & \qquad \qquad  \ge    \mathbb{P}[\cup_{w : \|w\|_{L^2} \le \varepsilon}  \{ Z^n + w \in E \} ] - \mathbb{P}[Z^n \in E]  \\
  \nonumber  & \qquad \qquad   =      \mathbb{P}[Z^n \in E^{+\varepsilon} \setminus E ]   \\
 \nonumber &    \qquad \qquad  \ge   c_3   \varepsilon   \mathbb{P}[f^n_i + h_i \in A] (1 - \mathbb{P}[f^n_i + h_i \in A])  - c_4 \varepsilon^2   .
    \end{align}
   To finish, recall that $A$ is an increasing compactly-supported continuity event, which means that, for almost every $f = f_m + h_m$, there exists $\delta > 0$ such that  
\[   \id_{\{f_m + h_m  + s \in A\}}  \qquad \text{and} \qquad   \id_{\{ f_m + h_m + \varepsilon + s \in A \}}  \]
are constant for $s \in (-\delta,\delta)$. In particular, since $f^n_m \to f_m$ in the sense of finite-dimensional distributions, and also in law in the $C^0(\R^d)$-on-compacts topology if $f$ is continuous, $\mathcal{F}_m^+$-almost surely
\begin{equation}
\label{e:cont1}   
\mathbb{P}[f^n_m + h_m \in A] \to   \mathbb{P}[f_m + h_m \in A]  \quad \text{and} \quad  \mathbb{P}[f^n_m +  h_m + \varepsilon  \in A]  \to  \mathbb{P}[f_m +  h_m + \varepsilon  \in A]  . 
\end{equation} 
Then sending $n \to \infty$ in \eqref{e:rusfin2}, and then $\eps \to 0$, gives \eqref{e:rusfin}.

\smallskip
 Let us next show that
\begin{equation}
\label{e:russo2}
    \frac{d^+}{d\ell} \P[f + \ell  \in A ] \bigg|_{\ell = 0}  \ge  c_1 (2^m)^{\alpha/2}  \sum_{i \in \Z^d} \Inf_{i,m} .
    \end{equation}
 The proof is similar to \eqref{e:russo1}. For each $i \in \Z^d$, let $\psi_i : \R^d \to [0,1]$ be a smooth function such that $\psi_i(x) = 1$ on $\{x : d_\infty(x, \text{proj}(B_{i,m})) \le 2^{m+1}\}$ and $\psi_i(x) = 0$ on $\{x : d_\infty(x, \text{proj}(B_{i,m})) \ge 2^{m+2}\}$, recalling that $\text{proj}(B_{i,m})$ denotes the projection of $B_{i,m}$ onto $\R^d$. Then $\sum_{i \in \Z^d}\psi_i(x) \le c_2$, for $c_2 > 0$ depending only on $d$. Therefore, since $A$ is increasing, and by the multivariate chain rule for Dini derivatives,
\begin{equation}
\label{e:rusfin3}
 \frac{d^+}{d \ell} \mathbb{P}[f + \ell \in A]  \Big|_{\ell = 0} \ge c_2^{-1} \sum_{i \in \Z^d} \frac{d^+}{d \ell} \mathbb{P}[f + \ell \psi_i \in A]  \Big|_{\ell = 0} .
 \end{equation}
 For each $i \in \Z^d$, let $f_i = q \star_1 W_{i,m}$, let $f'_i$ denote an independent copy of $f_i$, and define $h_i = f - f_i$. Recall that $\mathcal{F}_{i,m}$ denotes the $\sigma$-algebra generated by $ \sum_{ (i',m') \in \Z^d \times \N  \setminus (i,m) } W_{i',m'}$. We claim that, $\mathcal{F}_{i,m}$-almost surely
\begin{equation}
\label{e:rusfin4}
\frac{d^+}{d \ell} \mathbb{P}[f_i + h_i +  \ell \psi_i \in A | \mathcal{F}_{i,m} ] \Big|_{\ell = 0}   \ge     c_3  (2^m)^{\alpha/2}    \mathbb{P}[ \id_{\{f_i + h_i  \in A\}} \neq \id_{\{f'_S + h_i \in A \}} | \mathcal{F}_{i,m} ]  .
\end{equation}
Assuming this, and combining with \eqref{e:rusfin3}, the proof of \eqref{e:russo2} follows in the same way as for~\eqref{e:russo1}.

\smallskip
The proof of \eqref{e:rusfin4} is analogous to \eqref{e:rusfin}. Fix $i \in \Z^d$, condition on $\mathcal{F}_{i,m}$ and drop it from the notation. Let $(\varphi_j)_{j \ge 1}$ be an orthonormal basis of $L^2(B_{i,m})$, and let $Z = (Z_j)_{j \ge 1}$ be a sequence of i.i.d.\ standard Gaussians. Fix $n \in \mathbb{N}$ and consider $w  = (w_j) \in \mathbb{R}^n$ such that $\| w \|_{L^2} = \varepsilon > 0$. Then by the Cauchy-Schwartz inequality and using \eqref{e:fdecay} as before,
\begin{align*}
  \Big\|   \sum_{j \le n} w_i  (q \star_1   \varphi_j)   \Big\|_\infty  & =  \Big\|  q \star_1 \Big( \sum_{j \le n} w_j   \varphi_j \Big)   \Big\|_\infty  \\
  & \le \|q \|_{L^2( \R^d \times [2^{m-1} \id_{m > 0}, 2^m]  )} \Big\| \sum_{j \le n} w_j \varphi_j \Big\|_{L^2}  \le c_5 (2^m)^{-\alpha/2}  \varepsilon .
  \end{align*}
 Recalling that $q$ has conic support, $q \star \varphi_j$ is supported on  $\{x : d_\infty(x, \text{proj}(B_{i,m})) \le 2^{m+1} \}$. Since also $\psi_i(\cdot) \ge \id_{d_\infty(\cdot, \text{proj}(B_{i,m})) \le 2^{m+1}}$, we therefore have, for every $x \in \R^d$,
\[   \sup_{w : \|w\|_{L^2} \le \varepsilon}  \, \sum_{j \le n} (Z_j +w_j )( q \star \varphi_j )(x) - f^n_i =   \sup_{w : \|w\|_{L^2} \le \varepsilon}  \,   \sum_{j \le n} w_j  (q \star   \varphi_j)(x)   \le  c_5 (2^m)^{-\alpha/2}  \varepsilon  \psi_i . \]
Therefore, since $A$ is increasing,
\begin{align*}
    & \mathbb{P}[f^n_i +  h_i +  c_5 (2^m)^{-\alpha/2}  \varepsilon  \psi_i \in A]  - \mathbb{P}[ f^n_i + h_i \in A ]   \\
    & \qquad  \ge    \mathbb{P}[\cup_{w : \|w\|_{L^2} \le \varepsilon}  \{ Z^n + y \in E \} ] - \mathbb{P}[Z^n \in E]  \\
    &  \qquad  =      \mathbb{P}[Z^n \in E^{+\varepsilon} \setminus E ]  .  
  \end{align*}
  This establishes \eqref{e:rusfin4} by taking $n \to \infty$ and $\eps \to 0$, as before.
  \end{proof}

%%%%%%%
\smallskip
\section{Weak mixing and applications to nodal connectivity of planar fields}
\label{s:wm}

In this section we consider a continuous Gaussian field $f$ on $\R^d$ in class $\calS$, with~$K$ its covariance kernel. We begin by studying the reproducing kernel Hilbert space (RKHS) of $f$, establishing the existence of certain functions with desirable properties. We then show how the existence of these functions implies a weak mixing property for $f$. Finally we show that, in the planar case, the weak mixing property implies Theorem \ref{t:main3} and Corollary~\ref{c:main3}, and also the stronger density bound in Theorem~\ref{t:main2}.

\subsection{The reproducing kernel Hilbert space}
Fix a smooth function $\psi: \R^+ \to [0, 1]$ with $\psi(x) = 1$ for $x \le 1/4$ and $\psi(x) = 0$ for $x \ge 1/2$. For $R > 0$ define 
\[ q_1(x, t) = \sqrt{w(t)} \psi(|x|/R) Q(|x|/t) \quad \text{and} \quad q_2(x,t) = \sqrt{w(t)} (1-\psi(|x|/R)) Q(|x|/t) , \]
and define $f_R = q_1 \star_1 W$ and $g_R = q_2 \star_1 W$. As shown in the proof of Proposition~\ref{p:smprop}, $f$ can be decomposed as $f = f_R + g_R$, where $f_R$ is a stationary $R$-range dependent Gaussian field, and $g_R$ is a stationary Gaussian field satisfying the bound \eqref{e:gbound}. 

\smallskip
Associated to the pair $(f_R, g_R)$ is the RKHS space $H_R \subseteq C^0(\R^d) \times C^0(\R^d)$ defined as
\[   H_R = \big\{  ( q_1 \star_1 \varphi , q_2 \star_1 \varphi) \big\}_{ \varphi \in L^2(\R^d \times \R^+)}  \]
equipped with the inner product inherited from $L^2(\R^d \times \R^+)$  (see Section \ref{s:rkhs}). Our weak mixing property will follow from the existence of $h  = (h_1,h_2) \in H_R$, with $\|h\|_{H_R}$ not too large, such that $h_1$ is positive and large on a domain $D_1$, negative and large on a disjoint domain $D_2$, and for which $h_2$ has small magnitude. 

\begin{proposition}
\label{p:rkhsbound}
For every pair of disjoint piecewise smooth Lipschitz domains $D_1$ and $D_2$, there exists a $c > 0$ such that, for all $f \in \calS_\alpha$, $\alpha > 0$,
\begin{equation}
\label{e:rkhsbound1}
\limsup_{R \to \infty}  R^{-d}  \, \Delta_R(RD_1, RD_2)  < \infty   ,
 \end{equation} 
 and if $\alpha \le d$, 
 \begin{equation}
\label{e:rkhsbound2}
\limsup_{R \to \infty}  R^{-d-1} w(R)^{-1}   \, \Delta_R(RD_1, RD_2) \le c  ,
 \end{equation} 
 where
\begin{equation}
\label{e:inf}
 \Delta_R(S_1,S_2) =  \inf\big\{  \| (h_1,h_2) \|_{H_R}^2 : h_1|_{S_1} \ge 1, h_1|_{S_2} \le -1,  \| h_2 \|_\infty  \le 1/2 \big\} .  
 \end{equation}
 \end{proposition}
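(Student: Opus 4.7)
My plan is to construct explicit $\varphi \in L^2(\R^d \times \R^+)$ such that $(q_1 \star_1 \varphi, q_2 \star_1 \varphi)$ satisfies the pointwise constraints, and then bound $\|\varphi\|_{L^2}^2$, which upper-bounds $\|(h_1,h_2)\|_{H_R}^2$ by definition. The two bounds arise from constructions at different scale regimes for $\varphi$.

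For \eqref{e:rkhsbound1} I would use a \emph{small-scale} construction. Fix a smooth compactly supported $\chi \colon \R^d \to \R$ that is $\ge \lambda$ on a slight fattening of $D_1$, $\le -\lambda$ on a slight fattening of $D_2$, with disjoint supports, for a normalisation $\lambda > 0$ to be chosen. Set $\chi_R(y) = \chi(y/R)$ and take $\varphi(y,t) = \chi_R(y) \mathds{1}_{[1,2]}(t)$. For $x \in RD_1 \cup RD_2$ and $R$ large, the cutoff $\psi(|x-y|/R)$ equals $1$ on the effective $y$-support $|x-y| = O(t) = O(1)$ of the kernel $Q((x-y)/t)$ (via the exponential decay \eqref{e:qbounds}), while $\chi_R$ varies on scale $R$. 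A local Taylor expansion then yields $h_1(x) \approx \kappa \chi_R(x)$, with $\kappa = \int_1^2 \sqrt{w(t)} t^d\,dt \cdot \int Q > 0$. Choosing $\lambda = 1/\kappa$ (and $\chi$ normalised so that the pre-scaling target is $\ge \kappa$ on $D_1$ and $\le -\kappa$ on $D_2$) gives the required pointwise bounds on $h_1$, and $\|\varphi\|^2_{L^2} = O(R^d)$. For $h_2$, the factor $(1-\psi(|x-y|/R))$ forces $|x-y| \ge R/4$ in the integrand; combined with $|Q((x-y)/t)| \le 100\, e^{-|x-y|/100}$ for $t \le 2$, this yields $\|h_2\|_\infty = O(R^d e^{-R/400}) \le 1/2$ for $R$ large.

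For \eqref{e:rkhsbound2} in the regime $\alpha \le d$ I aim for a \emph{large-scale} construction exploiting the slowly-decaying weight $w$. The idea is to concentrate $\varphi$ at scales $t \sim R$, where $\sqrt{w(t)} \sim \sqrt{w(R)}$ and where $Q(\cdot/t)$ spreads over a ball of radius $\sim R$, so that a small amount of $\varphi$ produces $h_1$ of order one across $RD_1 \cup RD_2$. A natural ansatz is $\varphi(y,t) = c_R \chi_R(y)\rho(t/R)$ with $\rho$ a bump localised near $t/R \sim 1$ and $c_R$ a normalisation chosen so that $h_1 \ge 1$ on $RD_1$ and $h_1 \le -1$ on $RD_2$. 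Tracking the magnitudes, one expects that a single unit of $\varphi$ at scale $t \sim R$ produces $h_1$-contribution $\sim \sqrt{w(R)}\, R^{(d+1)/2}$, and balancing against the $L^2$-cost of $\varphi$ gives the desired $\|\varphi\|^2_{L^2} \sim R^{d+1}w(R)$. Control of $\|h_2\|_\infty$ at these scales exploits the variance estimate \eqref{e:gbound} from Proposition \ref{p:smprop}, which shows that $\var[g_R(0)] = O(w(R) R^{d+1})$; the $\Gamma(\alpha)$ factor there yields the uniform-in-$\alpha$ constant $c$.

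The principal obstacle is the $\|h_2\|_\infty \le 1/2$ constraint in the second construction: at scales $t \sim R$ the cutoff $(1-\psi)$ in $q_2$ admits non-negligible mass, so $h_2$ receives a significant large-scale contribution which must be bounded \emph{pointwise} rather than merely in variance. I expect this requires either a two-scale construction combining the efficient large-scale $\varphi$ with a small-scale corrective term that cancels the offending part of $h_2$, or a more refined analysis exploiting the scale-mixture identity $q(x,t) = \sqrt{w(t)} Q(x/t)$ together with the $L^\infty$ tail bounds for $g_R$ that follow from \eqref{e:gbound} via Kolmogorov-type estimates. Once this balancing is in place, both the sharp quantitative bound and its uniformity in $\alpha \in (0, d]$ should follow.
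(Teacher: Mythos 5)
Your construction for \eqref{e:rkhsbound1} is sound and essentially the same as the paper's: concentrate $\varphi$ at scales that are a vanishing fraction of $R$, so that $Q(|x-y|/t)$ is localised at distances $\ll R/4$ and the cutoff $1-\psi(|\cdot|/R)$ annihilates $h_2$ up to an exponentially small error, while the local averaging against $Q$ produces $h_1 \approx \mathrm{const}\cdot\chi(\cdot/R)$. Your $\|\varphi\|^2_{L^2} = O(R^d)$ accounting also matches.

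For \eqref{e:rkhsbound2} you have the correct balance heuristic (the RKHS cost should come out $\sim (R^{d+1}w(R))^{-1} \asymp K(R)^{-1}$), but the proposal is incomplete precisely where you flag it. The obstacle is real if you insist on $t \sim R$ exactly, but the fix is much simpler than a two-scale corrective construction or an appeal to \eqref{e:gbound}. The paper instead takes $\varphi$ supported at scales $t \in (c_1 R/2,\,c_1 R)$ for a \emph{fixed but small} $c_1 > 0$, with $\varphi(x,t) = (\id_{RD_1}(x)-\id_{RD_2}(x))\sqrt{w(t)}\,\id_{t\in(aR,bR)}$. This threads the needle on both sides: (i) since $w$ is regularly varying, $w(c_1 R) \asymp w(R)$ up to $\alpha$-dependent but bounded constants, so the $w(R)^{-1}$ gain in the RKHS cost survives; (ii) since $Q$ decays exponentially and $t \le c_1 R$, the contribution to $h_2$ from $|x-y|\ge R/4$ is of size $\lesssim \int_{|y|\ge 1/4} e^{-|y|/(100 c_1)}\,dy$, which is made smaller than half the $h_1$ lower bound (which is $\gtrsim c_1^d$ via the Lipschitz volume estimate) by choosing $c_1$ small. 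The comparison $\|h_2\|_\infty \le \tfrac12 \min\{h_1|_{RD_1}, -h_1|_{RD_2}\}$ then holds pointwise for the \emph{unnormalised} $h^{a,b}$, and the constant $\lambda$ used to renormalise so that $h_1 \ge 1$ and $\|h_2\|_\infty\le 1/2$ is bounded independently of $\alpha$. The uniform constant $c$ in \eqref{e:rkhsbound2} then comes out as a ratio of two integrals of $t^{-\alpha-d-1}$ and $t^{-\alpha-1}$ over $(c_1/2,c_1)$, a continuous function of $\alpha$ bounded on $(0,d]$ — not from the $\Gamma(\alpha)$ in \eqref{e:gbound}.

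On your fallback suggestion: the variance bound \eqref{e:gbound} concerns the \emph{random} field $g_R$ and does not transfer to a pointwise bound on the \emph{deterministic} RKHS element $h_2 = q_2\star_1\varphi$; that is a different quantity, and the constraint in \eqref{e:inf} is on $\|h_2\|_\infty$, not on any probabilistic functional. The bound \eqref{e:gbound} is indeed needed, but in the proof of Proposition \ref{p:wm} (to control $\sup|g_R|$ via BTIS), not here. So the route via \eqref{e:gbound} would not close the gap.

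Finally, note that for the same $\varphi^{a,b}$, the paper gets \eqref{e:rkhsbound1} by simply taking $(a,b) = (0,c_1)$ instead of $(c_1/2,c_1)$ — your $t\in[1,2]$ construction is a legitimate alternative but is not needed; one construction handles both regimes.
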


\begin{remark}
The two cases \eqref{e:rkhsbound1} and \eqref{e:rkhsbound2} correspond to the short-range case $\alpha > d$, in which one expects $ \Delta_R(RD_1, RD_2) \asymp R^d$, and the strongly correlated case $\alpha < d$, in which one expects $ \Delta_R(RD_1, RD_2) \asymp  K(R)^{-1}$ (recall that $K(R) \asymp R^{d+1} w(R)$ by Proposition~\ref{p:smprop}).
\end{remark}

 \begin{proof}
  In the proof $c_i > 0$ will denote constants that depend only on $D_1$ and $D_2$. For $R > 0$, we consider functions $h^{a,b} \in H_R$, $0 \le a \le b \le 1$, defined as 
\[ h^{a,b}  = (h_1^{a,b}, h_2^{a,b}) =  ( q_1 \star_1 \varphi^{a,b} , q_2 \star_1 \varphi^{a,b})  \]
where
\[  \varphi^{a,b}(x, t) =  \big(  \id_{R D_1}(x) - \id_{R D_2}(x)  \big) \times \sqrt{w(t)} \id_{t \in ( a R, b R)   } . \]
 In particular we will show that, for sufficiently small $c_1 > 0$ and large $c_2 > 0$, for every $a \le b \le c_1$ we can find a $\lambda \in (0,c_2)$ such that
 \[  \bar{h}^{a,b} = \lambda  R^{-d-1} \Big( \int_{a}^{b} w(Rt) t^d \, dt  \Big)^{-1} h^{a,b} \] 
 is a candidate function for the infimum in \eqref{e:inf}.
 
\smallskip
To this end we first claim that, if $c_1 > 0$ is fixed sufficiently small then, then there exists $c_3 > 0$ such that for every $a \le b \le c_1$
\begin{align}
\label{e:rkhs1}
&  \min \Big\{  h^{a,b}_1(xR)|_{x \in D_1} ,  -h^{a,b}_1(xR)|_{x \in D_2} \Big\}  \\
\nonumber & \qquad \qquad \ge   R^{d+1}   \int_{a}^{b} w(Rt)   \Big( c_3 t^d -  100 \int_{ |y| \ge c_3 } e^{-|y| / (100 t) } \, dy \Big)   dt   ,  
 \end{align}
and
\begin{equation}
\label{e:rkhs2}
\| h^{a,b}_2 \|_\infty  \le R^{d+1} \int_{a}^{b} w(Rt)    \Big( 100 \int_{ |y| \ge c_3 }  e^{-|y| / (100 t) } \, dy \Big) dt    .
\end{equation}
To establish \eqref{e:rkhs1}, we first note the fact that, since $D_i$ are piecewise smooth and Lipschitz, we may fix a $c_4 > 0$ such that, for sufficiently small $t > 0$,
 \begin{equation}
 \label{e:smooth}
  \inf_{y \in D_i} \textrm{Vol} \big( D_i \cap B(y, t) \big) \ge c_4 t^d .  
  \end{equation}
Then if $a \le b \le 25$, by the change of variables $(y,t) \mapsto (Ry,Rt)$, for $x \in D_1$,
\begin{align*}
& h^{a,b}_1(xR) \\
& \quad = R^{d+1} \int_{a}^{b} w(Rt)   \Big(    \int_{D_1} \psi( |x-y| ) Q( |x-y|/t) \, dy  - \int_{D_2} \psi(|x-y|) Q(|x-y|/t)   \, dy  \Big)  dt \\
& \quad \ge R^{d+1}  \int_{a}^{b} w(Rt)  \Big(   \frac{1}{100}  \int_{D_1} \id_{ |x-y| \le t/100 }  \, dy  -100  \int_{D_2} e^{-|x-y|/(100t)}  \, dy  \Big)   dt \\
& \quad \ge R^{d+1}  \int_{a}^{b} w(Rt)   \Big( c_5 t^d       - 100    \int_{ |y| \ge d(D_1,D_2) } e^{-|y| / (100 t) } \, dy  \Big)   dt,
\end{align*}
where the first inequality used that 
\[ \id_{|x| \le 1/4} \le \psi(x) \le  1 \quad \text{and} \quad \frac{1}{100} \id_{|x| \le 1/100} \le Q(x) \le 100 e^{-|x|/100}, \]
 the second inequality used \eqref{e:smooth}. Applying the same argument to $x \in D_2$, the bound \eqref{e:rkhs1} follows. To establish \eqref{e:rkhs2} we similarly have, for $x \in \R^d$,
\begin{align*}
|h^{a,b}_2(xR)| & \le R^{d+1} \int_{a}^{b} w(Rt)   \Big(    \int_{\R^d} (1 - \psi( |x-y| ) ) Q( |x-y|/t) \, dy    \Big)  \, dt \\
& \le 100 R^{d+1} \int_{a}^{b} w(Rt)  \Big(   \int_{\R^d} \id_{ |y| \ge 1/4  }  e^{-|y|/(100t)}  \, dy   \Big)  \, dt 
\end{align*}
where we used that $ 1 - \psi(x) \le \id_{|x| \ge 1/4} $, $Q(x) \le 100 e^{-|x|/100}$.

\smallskip
We next observe that, for sufficiently small $t > 0$,
 \[ 100  \int_{ |y| \ge c_3 } e^{-|y| / (100 t) } \, dy     \le   c_3 t^d/4     . \]
 Hence, comparing \eqref{e:rkhs1} and \eqref{e:rkhs2}, we can fix $c_1$ sufficiently small so that, for every $a \le b \le c_1$,
\[  2 \| h^{a,b}_2 \|_{\infty}   \le   \min \Big\{    \inf_{x \in RD_1} h^{a,b}_1(x) , \inf_{x \in RD_2} -h^{a,b}_1(x)  \Big\}   . \]
 In particular, for every $a \le b \le c_1$ there exist a $\lambda \in (0,4/c_3]$ such that
 \[  \bar{h}^{a,b} = \lambda   R^{-d-1} \Big( \int_{a}^{b} w(Rt) t^d \, dt  \Big)^{-1} h^{a,b} \] 
 is a candidate function for the infimum in \eqref{e:inf}.
 
 \smallskip It remains to bound the RKHS norm of $\bar{h}^{a,b}$ for suitable $a,b$. For general $a > b$,
\begin{align*}
 \label{e:norm}
\|\bar{h}^{a,b}\|^2_{H_R} &  =  \lambda^2 R^{-2d-2} \Big( \int_{a}^{b} w(Rt) t^d \, dt  \Big)^{-2}  \| \varphi^{a,b} \|_{L^2(\R^d \times \R^+)}^2  \\
&  =    \lambda^2  \times \textrm{Vol}( D_1 \cup D_2)   R^{-d-1} \Big( \int_{a}^{b} w(Rt) t^d \, dt  \Big)^{-2} \times  \int_{a}^{b} w(Rt)  \, dt   \\
& \le c_6 R^{-d-1} \Big( \int_{a}^{b} w(Rt) t^d \, dt  \Big)^{-2} \times  \int_{a}^{b} w(Rt)  \, dt    .
\end{align*}
Recall that $w$ is regularly decaying with index $\alpha + d + 1$, so in particular $w(t)$ and $w(t) t^d$ are integrable. Then to establish \eqref{e:rkhsbound1} we choose $a = 0$ and $b = c_1$, and notice that by the change of variables $t \mapsto t/R$, as $R \to \infty$,
\[ \frac{ R^{-2d-1} \int_{0}^{b} w(Rt)  \, dt}{\big( \int_{0}^{b} w(Rt) t^d \, dt  \big)^{2} }  =   \frac{ \int_{0}^{bR} w(t)  \, dt }{   \big( \int_{0}^{bR} w(t) t^d \, dt  \big)^{2} }  \to \frac{\| w (t) \|_{L^1} }{ \|w (t) t^d \|_{L^1}^2} \in (0, \infty) . \]
To establish \eqref{e:rkhsbound2} we instead choose $a = c_1/2$ and $b = c_1$. Then by Potter's bounds \eqref{e:potter} and the dominated convergence theorem, as $R \to \infty$,
 \[    w(R)^{-1} \int_{c_1/2}^{c_1} w(Rt) t^d  \, dt  =   \int_{c_1/2}^{c_1} w(tR) (tR)^d /  ( w(R) R^d )  \, dt   \to   \int_{c_1/2}^{c_1} t^{-\alpha-1} \, dt , \]
 and similarly
 \[ w(R)^{-1} \int_{c_1 /2}^{c_1 } w(t)  \, dt   \to  \int_{c_1/2}^{c_1} t^{-\alpha-d-1} \, dt . \]
Hence
\begin{equation}
\label{e:rkhs3}
\limsup_{R \to \infty}  w(R)^{-1} R^{-d-1} \|\bar{h}^{c_1/2,c_1}\|^2_{H_R}  \le    \frac{ c_6  \int_{c_1/2}^{c_1} t^{-\alpha-d-1} \, dt }{ \big( \int_{c_1/2}^{c_1} t^{-\alpha-1} \, dt  \big)^2} \in (0, \infty) .
\end{equation}
Since the right-hand side of \eqref{e:rkhs3} is a continuous function of $\alpha \ge 0$, the left-hand side is bounded uniformly over $\alpha \in (0,d]$, and \eqref{e:rkhsbound2} follows.
 \end{proof}

\subsection{Weak mixing}
Since $f \in \calS$ is positively-correlated, increasing events are also positively-correlated (Lemma \ref{l:pa}). Our weak mixing result gives a \textit{reverse inequality}, stating roughly that increasing events on disjoint domains are not \textit{too} positively-correlated. 

\begin{proposition}
\label{p:wm}
For every $\eps > 0$ and every pair of disjoint piecewise smooth Lipschitz domains $D_i \subset \R^d$, $i=1,2$, there exists $c_1,c_2 > 0$ such that, for every $f \in \calS_\alpha$, $\alpha > 0$,
\[  \liminf_{R \to \infty} \,  \inf \Big\{  \P[A_1 \cap A_2^c ] : A_i \in I(RD_i), \P[A_1] \ge \eps, \P[A_2^c] \ge \eps \Big\}  \ge  c_1 e^{-c_2 / \alpha^2 }  ,\]
where $I(D)$ denotes the set of increasing events measurable with respect to $f|_D$. 

\end{proposition}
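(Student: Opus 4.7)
The proof combines the conditional-independence structure of the finite-range decomposition $f = f_R + g_R$ from Proposition~\ref{p:smprop} with a Cameron--Martin shift using the RKHS element $h=(h_1,h_2)\in H_R$ furnished by Proposition~\ref{p:rkhsbound}. After rescaling so that $d(D_1,D_2)>1$, the $R$-range dependence of $f_R$ guarantees that the restrictions $f_R|_{RD_1}$ and $f_R|_{RD_2}$ are conditionally independent given $g_R$, so that for any measure on $(f_R,g_R)$,
\[
\P[A_1 \cap A_2^c] = \mathbb{E}\bigl[\psi_1(g_R)\,\psi_2(g_R)\bigr],
\]
where $\psi_1 = \P[A_1\mid g_R]$ is increasing in $g_R|_{RD_1}$ and $\psi_2 = \P[A_2^c\mid g_R]$ is decreasing in $g_R|_{RD_2}$, with marginal expectations at least $\eps$.

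The plan is then to apply the Cameron--Martin shift $(f_R,g_R)\mapsto(f_R+h_1,g_R+h_2)$ producing a law $\mu_h$ absolutely continuous with respect to $\mu$. The conditional-independence factorization is preserved under $\mu_h$, and the defining properties of $h$ (namely $h_1+h_2\ge 1/2$ on $RD_1$ and $\le -1/2$ on $RD_2$, with $\|h_2\|_\infty\le 1/2$) combine with monotonicity to bound each shifted conditional probability below by a constant $c_\eps>0$: on the good event where $\sup_{RD_1\cup RD_2}|g_R|$ is small (controlled via the variance bound \eqref{e:gbound} and BTIS, using $\var g_R\lesssim w(R)R^{d+1}$), the shifted conditional probability $\mu_h(A_1\mid g_R+h_2=g)$ dominates $\P[f_R+c\in A_1]$ for a small constant $c$, which is in turn close to $\P[A_1]\ge\eps$ by continuity and monotonicity. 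An analogous bound for $A_2^c$ then yields $\mu_h(A_1\cap A_2^c)\ge c_\eps^2(1-o(1))$.

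Finally, the reverse change-of-measure inequality derived from Cauchy--Schwarz applied to $L = d\mu_h/d\mu$, combined with $\mathbb{E}_\mu[L^2]=e^{\|h\|_{H_R}^2}$, gives
\[
\P[A_1\cap A_2^c] \ge \mu_h(A_1\cap A_2^c)^2 \, e^{-\|h\|_{H_R}^2}.
\]
Substituting Proposition~\ref{p:rkhsbound}'s explicit bound on $\|h\|_{H_R}^2$ (whose $\alpha$-dependent constant is controlled by tracking the dependence of the integrals in \eqref{e:rkhsbound2} on $\alpha$, producing a factor of order $1/\alpha^2$ as $\alpha\to 0$) then yields the claimed $c_1 e^{-c_2/\alpha^2}$ lower bound.

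The main obstacle will be the conditional lower bound $\mu_h(A_1\mid g_R+h_2)\ge c_\eps$ uniformly in the realization of $g_R+h_2$: since the events are taken over the large class of increasing events on $RD_1$ with $\P\ge\eps$, no scale-specific features are available. The resolution is to combine (a) the deterministic $L^\infty$ bound on $h_2$, (b) the probabilistic concentration of $g_R$ near zero on $RD_1\cup RD_2$, and (c) the monotonicity of $A_1$ with the hypothesis $\P[A_1]\ge\eps$, reducing the estimate to the pointwise-constant-shift case where monotonicity alone closes the argument; the symmetric treatment handles $A_2^c$.
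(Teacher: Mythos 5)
Your overall strategy (finite-range decomposition plus a Cameron--Martin shift built from Proposition~\ref{p:rkhsbound}, closed by a reverse change-of-measure inequality) is the right skeleton, but the quantitative heart of the argument fails. The fatal step is the cost of the shift. Any $h=(h_1,h_2)\in H_R$ satisfying $h_1|_{RD_1}\ge 1$, $h_1|_{RD_2}\le -1$ and $\|h_2\|_\infty\le 1/2$ has squared norm of order $\Delta_R(RD_1,RD_2)\asymp K(R)^{-1}=(w(R)R^{d+1})^{-1}\to\infty$: Proposition~\ref{p:rkhsbound} provides an \emph{upper bound on a divergent quantity}, not a bound uniform in $R$ (see the remark following it). Hence in your final inequality $\P[A_1\cap A_2^c]\ge \mu_h(A_1\cap A_2^c)^2\,e^{-\|h\|_{H_R}^2}$ the exponential factor is $e^{-cK(R)^{-1}}\to 0$, and the liminf over $R$ of your lower bound is zero. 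You are forced into this because your monotonicity step requires the shifted field to exceed $f$ by an order-one constant on $RD_1$ (you use $h_1+h_2\ge 1/2$ there), and no such $h$ exists at bounded cost. The paper's proof resolves exactly this tension: it shifts only by $t_Rh$ with $t_R\asymp(\alpha^{-1}w(R)R^{d+1})^{1/2}\to 0$, which is just large enough to dominate $\sup_{R(D_1\cup D_2)}|g_R|$ (of the same order, by \eqref{e:gbound}, Kolmogorov and BTIS), while the cost $t_R^2\|h\|_{H_R}^2\lesssim\alpha^{-1}$ stays bounded. Relatedly, the constant $e^{-c_2/\alpha^2}$ does not come from tracking $\alpha$ in the integrals of \eqref{e:rkhsbound2} (those remain bounded as $\alpha\to 0$); it arises from performing the shift \emph{twice}, with the sup-norm threshold for $g_R$ at the second stage calibrated against the already $e^{-c/\alpha}$-small probability produced at the first stage.

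A second, independent error is the opening factorisation. The fields $f_R$ and $g_R$ are built from the \emph{same} white noise via the spatial cutoff $\psi(|\cdot|/R)$, so they are correlated; and even granting the $R$-range dependence of $f_R$, the restrictions $f_R|_{RD_1}$ and $f_R|_{RD_2}$ are \emph{not} conditionally independent given $g_R$ — conditioning on a field that mixes the noise over both regions couples them, just as two independent variables become dependent given their sum. So the identity $\P[A_1\cap A_2^c]=\E[\psi_1(g_R)\psi_2(g_R)]$ is unjustified, and it cannot be rescued by positive association, which for an increasing $A_1$ and a decreasing $A_2^c$ points in the wrong direction. The paper uses independence only where it genuinely holds: unconditionally, for the event $\{f_R\in A_1\}\cap\{f_R\in A_2^c\}$, which is measurable with respect to $f_R$ alone; the two passages between $f$ and $f_R$ (one in each direction) are then each handled by a small shift of the kind described above.
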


\begin{remark}
In the `short range' case $\alpha > d$, with a slight modification of the proof we could actually establish a much stronger `quasi-independence' result
\begin{equation}
\label{e:qistrong}
 \limsup_{R \to \infty}  \,  \sup \Big\{ \big|  \P[A_1 \cap A_2^c ] - \P[A_1] \P[A_2^c] \big|   :  A_i \in I(RD_i)  \Big\}  = 0 ,
\end{equation}
which is already known under weaker assumptions \cite{mv20,mui23}. On the other hand, we do not expect \eqref{e:qistrong} to hold for arbitrarily small $\alpha < \alpha_0$ (i.e.\ $\alpha_0=3/2$ in the case $d=2$) since this would conflict with the belief that $f \in \calS_\alpha$ is outside the Bernoulli universality class for small~$\alpha$. In the intermediate regime $\alpha \in (\alpha_0,d)$ we expect that \eqref{e:qistrong} holds for percolation-type events, but not for arbitrary increasing events. If $\alpha > 2d$ then one can establish \eqref{e:qistrong} for a wider class of events, not necessarily increasing, see e.g.\ \cite{bmr20}.
\end{remark}

In the proof we make use of the following bound, which is a special case of Lemma \ref{l:ebgen}:
 
\begin{lemma}
\label{l:eb}
For every $R \ge 1$, $h = (h_1,h_2) \in H_R$, and event $A$,
\[ \P[ (f_R  + h_1, g_R + h_2) \in A ] \ge   \P[ (f_R , g_R)   \in A] \exp \Big(   -  \frac{\|h\|_{H_R}^2}{2  \P[ (f_R, g_R)  \in A]}  - 1  \Big)   .\]
\end{lemma}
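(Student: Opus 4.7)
The plan is to establish this as a Cameron--Martin shift inequality, combining Jensen's inequality with Cauchy--Schwarz. By construction, $H_R$ is the isometric image of $L^2(\R^d \times \R^+)$ under the map $\varphi \mapsto (q_1 \star_1 \varphi,\, q_2 \star_1 \varphi)$, so any $h = (h_1, h_2) \in H_R$ is the image of a unique $\varphi_h$ with $\|\varphi_h\|_{L^2} = \|h\|_{H_R}$. The Cameron--Martin theorem applied to the white noise $W$ then yields that the law of $(f_R + h_1, g_R + h_2)$ is absolutely continuous with respect to the law of $(f_R, g_R)$, with Radon--Nikodym derivative $\exp\!\big(L_h - \tfrac12\|h\|_{H_R}^2\big)$, where $L_h := \int \varphi_h\, dW$ is a centred Gaussian random variable of variance $\|h\|_{H_R}^2$ that is measurable with respect to $(f_R, g_R)$.

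Setting $p := \P[(f_R, g_R) \in A]$ and applying Jensen's inequality to the convex function $x \mapsto e^x$ conditionally on $A$, one obtains
\[ \P[(f_R + h_1, g_R + h_2) \in A] = p \cdot \E\!\left[ e^{L_h - \|h\|_{H_R}^2/2} \,\big|\, A \right] \ge p \exp\!\Big( \E[L_h \mid A] - \tfrac12\|h\|_{H_R}^2 \Big), \]
so the task reduces to bounding $\E[L_h \mid A]$ from below. Since $L_h$ is centred, $\E[L_h \id_A] = -\E[L_h \id_{A^c}]$, and Cauchy--Schwarz furnishes the two complementary estimates $|\E[L_h \id_A]| \le \|h\|_{H_R}\sqrt{p}$ and $|\E[L_h \id_A]| \le \|h\|_{H_R}\sqrt{1-p}$, whence $\E[L_h \mid A] \ge -\|h\|_{H_R}/\sqrt{p}$ and $\E[L_h \mid A] \ge -\|h\|_{H_R}\sqrt{1-p}/p$ respectively.

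To conclude, I would verify the desired inequality by a short computation after splitting on the size of $p$. If $p \le 1/2$, use the first bound and substitute $u = \|h\|_{H_R}/\sqrt{p}$; the target inequality reduces to $\tfrac{1-p}{2} u^2 - u + 1 \ge 0$, whose discriminant equals $2p - 1 \le 0$. If $p > 1/2$, use the second bound and substitute $v = \|h\|_{H_R}\sqrt{1-p}/p$; the target reduces symmetrically to $\tfrac{p}{2} v^2 - v + 1 \ge 0$, whose discriminant equals $1 - 2p < 0$. In either regime the relevant quadratic is nonnegative, completing the proof. The main (mild) obstacle is precisely that neither Cauchy--Schwarz bound is sharp across the whole range of $p$---the first degrades as $p \to 1$ and the second as $p \to 0$---and the additive constant $-1$ in the stated exponent is exactly what absorbs the slack from covering both regimes uniformly via this case split.
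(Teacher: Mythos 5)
Your proof is correct, and it takes a genuinely different route from the paper's. The paper proves this as a special case of Lemma~\ref{l:ebgen}, whose proof (i) bounds the relative entropy $d_{KL}\big((f_R,g_R)\,\|\,(f_R+h_1,g_R+h_2)\big)\le \|h\|_{H_R}^2/2$ by reducing, via the orthonormal expansion of Lemma~\ref{l:c0conv} with $\varphi_1=\varphi_h/\|\varphi_h\|_{L^2}$, to a one-dimensional Gaussian shift and using contraction of relative entropy under measurable maps, and then (ii) cites a general entropy inequality (\cite[Lemma 2.5]{dm21b}) valid for arbitrary random variables, which is where the constant $-1$ comes from in their version. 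You instead write the Radon--Nikodym derivative explicitly as $\exp(L_h-\|h\|_{H_R}^2/2)$ with $L_h$ centred Gaussian of variance $\|h\|_{H_R}^2$, apply Jensen conditionally on $A$, and control $\E[L_h\mid A]$ by Cauchy--Schwarz with a case split at $p=1/2$; I checked both quadratics ($\tfrac{1-p}{2}u^2-u+1$ and $\tfrac{p}{2}v^2-v+1$) and their discriminants, and the computation is right. What your approach buys is self-containedness (no external lemma) by exploiting that the log-density is itself Gaussian, so Cauchy--Schwarz gives sharp enough control of $\E[L_h\mid A]$; what the paper's buys is that step (ii) works for arbitrary laws once the entropy bound is known. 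Two caveats, neither of which is a real gap relative to the paper's own level of rigour: the paper explicitly declines to invoke the Cameron--Martin theorem here because ``the setting is slightly non-standard,'' so you should justify your density claim by the same orthonormal reduction (the shift tilts only the coordinate $Z_1$, reducing to a one-dimensional Gaussian change of measure); and ``the unique $\varphi_h$'' requires taking the minimal-norm representative, since the map $\varphi\mapsto(q_1\star_1\varphi,q_2\star_1\varphi)$ may have a kernel --- this choice is also what makes $L_h$ measurable with respect to $(f_R,g_R)$, and it is the same implicit convention the paper uses in defining the inner product on $H_R$.
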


\begin{proof}[Proof of Proposition \ref{p:wm}]
In the proof $c_i > 0$ will denote constants that may depend only on $\eps$, $D_1$ and $D_2$. By rescaling the domain of $f$, without loss of generality we may assume that $d(D_1,D_2) \ge 1$. We treat the cases $\alpha \le d$ and $\alpha > d$ separately.

\smallskip \noindent \textit{Case $\alpha \le d$.} We first claim that there exist $c_1,c_2 > 0$ such that, for all $s \ge c_1$ and sufficiently large $R$,
\begin{equation}
\label{e:qitail}
 \P \big[ \|g_R \|_{R(D_1 \cup D_2)} \ge s ( \alpha^{-1} w(R)R^{d+1})^{1/2} \big] \le  e^{-(s  -c_1)^2/c_2} .
 \end{equation}
For this, define the rescaled field $\tilde{g}_R(x) =   (\alpha^{-1} w(R) R^{d+1})^{-1/2} g_R(Rx)$, which by Proposition~\ref{p:smprop} satisfies, for every multi-index $k$ with $|k| \le 1$,
\[ \var[\partial^k \tilde{g}_R(0)] = \alpha \Gamma(\alpha) ( \Gamma(\alpha) w(R)R^{d+1-2|k|})^{-1}  \var[\partial^k g_R(0)]    \le c_3  \]
for sufficiently large $R$ (recall that $\alpha \Gamma(\alpha)$ is bounded on $(0, d]$). Then by Kolmogorov's theorem (see \cite[Appendix A.9]{ns16}) and the BTIS inequality (see \cite[Theorem 2.9]{aw09}) 
\[  \P \Big[ \sup_{x \in D_1 \cup D_2} \tilde{g}_R(x)  \ge s \Big]   \le  e^{-(s-c_4)^2/c_3} , \]
which gives \eqref{e:qitail} after rescaling. 

\smallskip
Next recall that, by Proposition \ref{p:rkhsbound},  for sufficiently large $R$
\begin{equation}
\label{e:deltabound}
\Delta_R(RD_1,RD_2) \le  c_5 w(R)^{-1} R^{-d-1},
\end{equation} 
and define the constants $c_6$, $\mu_1 = \mu_1(\alpha)$ and $\mu_2 = \mu_2(\alpha)$ via
\[ e^{-(c_6 - c_1)^2/c_2} =  \eps/2   \ ,  \quad  \mu_1 = \Big( \frac{\eps}{2} e^{-c_6^2 c_5 / ( \eps \alpha) - 1} \Big)^2  \quad \text{and} \quad e^{-(\mu_2 - c_1)^2/c_2} =  \mu_1/2  . \] 
We claim that, provided $R \ge 1$ is taken sufficiently large so that \eqref{e:qitail} and \eqref{e:deltabound} are available,
\[  \inf \Big\{  \P[A_1 \cap A_2^c ] : A_i \in I(RD_i), \P[A_1] \ge \eps, \P[A_2^c] \ge \eps \Big\}   \ge \mu_3  > 0  \]
where  $\mu_3 =\mu_3(\alpha) =  \frac{\mu_1}{2} e^{ - \mu_2^2 c_5 / ( \eps \alpha) - 1}  $. It is simple to check that, as $\alpha \to 0$,
\[ \mu_1 \sim  c_7 e^{-c_8/\alpha}  \ , \quad \mu_2 \sim c_9 / \sqrt{\alpha}  \quad \text{and hence} \quad \mu_3 \sim c_{10} e^{-c_{11}/\alpha^2} , \]
and so this bound completes the proof.

\smallskip
We proceed in two steps. First we show that 
\[  \P[f_R \in A_1 , f_R \in A_2^c ] \ge \mu_1 .\]
Let $t_R = c_6 ( \alpha^{-1} w(R)R^{d+1})^{1/2} $. Using that $A_1$ is increasing and supported on $D_1$,
\[ \P[f_R \in A_1]  \ge \P[ f - t_R \in A_1, \| g_R \|_{\infty, D_1} \le t_R ] .\]
Now let $h = (h_1,h_2) \in H_R$ be such that $h_1|_{D_1} \ge 1$ and $-1/2 \le h_2|_{D_1} \le 1/2$. Then
\begin{align*}
 & \P[ f - t_R \in A_1, \| g_R \|_{\infty, D_1} \le t_R ]  \ge \P[f - t_R h_1 \in A_1,  \| g_R - t_R h_2 \|_{\infty, D_1} \le t_R/2 ] \\
 & \qquad \ge \P[f  \in A_1,  \| g_R  \|_{\infty, D_1} \le t_R/2 ] \exp \Big(    -  \frac{t_R^2 \|h\|_{H_R}^2}{2 \P[f  \in A_1,  \| g_R  \|_{\infty, D_1} \le t_R/2 ] }   - 1 \Big) .
 \end{align*}
where we used Lemma \ref{l:eb} in the second inequality. Using \eqref{e:qitail} and by the definition of $c_6$,
\[ \P[f  \in A_1,  \| g_R  \|_{\infty, D_1} \le t_R/2 ]  \ge  \P[f  \in A_1] -  \P[  \| g_R  \|_{\infty, D_1} \ge t_R/2 ]   \ge \eps - e^{-(c_6-c_1)/c_2 }= \eps/2 .\]
By monotonicity and the definition of $t_R$, we therefore have
\[ \P[f_R \in A_1] \ge \frac{\eps}{2} \exp \Big(    -  \frac{ c_6^2 \alpha^{-1} w(R)R^{d+1} \|h\|_{H_R}^2}{\eps}   - 1 \Big)  .\]
Taking a minimum over all compatible $h$, by \eqref{e:deltabound} and the definition of $\mu_1$,
\begin{align*}
\P[f_R \in A_1]  & \ge   \frac{\eps}{2}  \exp \Big(    -  \frac{ c_6^2 \alpha^{-1} w(R)R^{d+1}  \Delta_R(RD_1, RD_2)}{\eps}   - 1 \Big)  \\
&   \ge   \frac{\eps}{2}  \exp \Big(    -  \frac{c_6^2 c_5}{ \eps \alpha }  - 1 \Big)   = \sqrt{\mu_1} . 
\end{align*}
Since an analogous argument shows that $ \P[f_R \in A^c_2] \ge \sqrt{\mu_1}$, and since $f_R|_{D_1}$ and $f_R|_{D_2}$ are independent, we have
\[  \P[f_R \in A_1 , f_R \in A_2^c ] = \P[f_R \in A_1] \P[ f_R \in A_2^c ]   \ge \mu_1 \]
as claimed.

\smallskip
For the second step, we use the same reasoning to deduce that
\begin{align*}
& \P[f \in A_1, f \in A_2^c] \\
& \qquad \ge \P[f_R \in A_1, f_R \in A_2^c,  \| g_R \|_{\infty, D_1 \cup D_2} \le t_R/2 ] \\
& \qquad \qquad  \times \exp \Big(    -  \frac{t_R^2 \|h\|_{H_R}^2}{2 \P[f_R  \in A_1,  f_R \in A_2^c , \| g_R  \|_{\infty, D_1 \cup D_2} \le t_R/2 ] }   - 1 \Big) 
\end{align*}
for $t_R = \mu_2 (\alpha^{-1} w(R)R^{d+1})^{1/2}$ and every $h = (h_1,h_2) \in H_R$ such that $h_1|_{D_1} \ge 1$, $h_2|_{D_2} \le -1$, and $-1/2 \le h_2|_{D_1 \cup D_2} \le 1/2$. Then since
\begin{align*}
&  \P[f_R  \in A_1,  f_R \in A_2^c , \| g_R  \|_{\infty, D_1 \cup D_2} \le t_R/2 ]  \\
& \qquad \ge  \P[f_R  \in A_1,  f_R \in A_2^c ] - \P[ \| g_R  \|_{\infty, D_1 \cup D_2} \le t_R/2 ] \\
 & \qquad \ge \mu_1 - e^{-(\mu_2 - c_1 )^2/c_2} = \mu_1/2 , 
 \end{align*}
by the definition of $\mu_1$, we have that $ \P[f \in A_1, f \in A_2^c]  $ is bounded below by
\[  \frac{\mu_1}{2} \exp \Big(    -  \frac{ \mu_2^2 \alpha^{-1} w(R)R^{d+1} \Delta_R(RD_1, RD_2)}{\eps}   - 1 \Big)   \ge \frac{\mu_1}{2} \exp \Big(    -   \frac{\mu_2^2 c_5 }{\eps \alpha}  - 1 \Big)  =  \mu_3 , \]
which completes the proof in this case.

\smallskip \noindent \textit{Case $\alpha > d$.} The proof is similar except we replace \eqref{e:qitail} with the bound
\begin{equation}
\label{e:qitail2}
 \P \big[ \|g_R \|_{R(D_1 \cup D_2)} \ge s (  R^{-d} / (\log R) )^{1/2} \big] \le  e^{-(s -c_1)^2/c_2} 
 \end{equation}
 which holds since the rescaled field $\tilde{g}_R(x) =   (R^d  (\log R) )^{1/2} g_R(Rx)$ satisfies, by Proposition~\ref{p:smprop},
\[ \var[\partial^k \tilde{g}_R(0)] =   R^{d + 2|k|} (\log R) \var[\partial^k g_R(0)]    \to 0 \] 
as $R \to \infty$, where we used that $w(R)$ is regularly decaying with index $\alpha + d + 1$ in the final step (and recall $\alpha > d$). The remainder of the proof is the same as the previous case, except that we may choose $\mu_1$ and $\mu_2$ independently of $\alpha$.
\end{proof}

\begin{remark}
We remark that the extra $R^{-2|k|}$ factor in the bound 
\[  \var[\partial^k g_R(0)]  \le c w(R) R^{d+1} R^{-2|k|} \]
in \eqref{e:gbound} is crucial in the case $\alpha \le d$ of the proof of Proposition \ref{p:wm}, since it allows us to show that the supremum of the field $g_R$ on $\Lambda_R$ is of order $w(R) R^{d+1}$. For general $f \in \F_\alpha$, we only have the weaker bound
\[    \var[\partial^k g_R(0)]  \le c w(R) R^{d+1}  .\]
This shows only that the supremum of $g_R$ on $\Lambda_R$ is of order $w(R) R^{d+1} \sqrt{\log R}$, which would be insufficient in the proof.
\end{remark}

\subsection{Applications to nodal connectivity}

In this section we specialise to planar fields $f \in \calS$, and show how Proposition \ref{p:wm} is used to deduce Theorems \ref{t:main2} and \ref{t:main3} and Corollary~\ref{c:main3}. 

\smallskip
We begin with Theorem \ref{t:main3}. Apart from Proposition \ref{p:wm}, the main input into the proof is a uniform box-crossing property for the excursion sets $\{f \ge 0\}$, recently established in \cite{kt20}. For $a,b, > 0$, recall the box-crossing event $\nodcross(a,b)$, and analogously let $\cross(a,b)$ denote the event that $\{f \ge 0\} \cap ([0,a] \times [0,b])$ contains a path that crosses $[0,a] \times [0,b]$ from left-to-right.

\begin{theorem}[Uniform box-crossing property for the excursion sets; \cite{kt20}]
\label{t:rsw}
For every $\rho > 0$ there is a $c > 0$ such that, for all $f \in \calS$ and $R > 0$,
\[ c \le  \P[ \cross(R, \rho R) ]  \le    \P[ \cross(R, \rho R) ]  \le 1-c  .\]
\end{theorem}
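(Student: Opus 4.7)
The plan is to verify that every $f \in \calS$ satisfies the hypotheses of the general uniform RSW theorem of \cite{kt20}, which, given FKG, suitable symmetry, and self-duality, outputs uniform box-crossing estimates at all scales and aspect ratios.

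First, I would check positive association. Since $Q \ge 0$ by Definition \ref{d:s}, the covariance kernel
\[ K(x) = \int_{\R^+} w(t)\,(Q(\cdot/t) \star Q(\cdot/t))(x)\, dt \]
is pointwise non-negative, so by Pitt's theorem (Lemma \ref{l:pa}) the field $f$ is positively associated and the FKG inequality applies to increasing events of $\{f \ge 0\}$. The isotropy of $Q$ and stationarity of $f$ immediately yield invariance of the law under reflections in the coordinate axes and rotations by $\pi/2$. Continuity of $f$, required in order to talk about crossings of $\{f \ge 0\}$ unambiguously, comes from Assumption \ref{a:main} via Corollary \ref{c:cont}.

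Next, I would establish the baseline square-crossing estimate via self-duality at level $0$. Since $f$ is centered Gaussian, $-f \stackrel{d}{=} f$, so $\{f \ge 0\}$ and $\{f \le 0\}$ have the same law. Item (5) of Corollary \ref{c:cont} (a general consequence of continuity and non-degeneracy in two dimensions) gives that the left-right crossing of $\{f \ge 0\}$ in $[0,R]^2$ and the top-bottom crossing of $\{f \le 0\}$ in $[0,R]^2$ partition the probability space a.s. Combining with invariance under $\pi/2$ rotation then yields $\P[\cross(R,R)] = 1/2$ for every $R > 0$.

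With FKG, the axis/rotation symmetries, continuity, and the identity $\P[\cross(R,R)] = 1/2$ in hand, I would apply the uniform RSW result of \cite{kt20} to bootstrap from the square-crossing estimate to uniform two-sided bounds for $\P[\cross(R, \rho R)]$ at arbitrary aspect ratio $\rho$. The main subtlety is making sure the constant $c=c(\rho)$ does not depend on the particular $f \in \calS$; this comes down to the fact that the bootstrap in \cite{kt20} is quantitative and each of its inputs — FKG, the two symmetries, and the exact $1/2$ square-crossing probability — holds identically for every field in the class $\calS$, independent of the specific choice of weight $w$ and kernel $Q$. The residual point is to ensure the mild regularity at the boundary of $[0,R] \times [0,\rho R]$ needed for \cite{kt20} (e.g.\ a.s.\ transversality of $\{f=0\}$ with the boundary edges), which follows from the smoothness and non-degeneracy built into Definition \ref{d:s} via Corollary \ref{c:cont}.
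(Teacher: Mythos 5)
Your proposal is correct and follows exactly the route the paper takes: Theorem \ref{t:rsw} is imported from \cite{kt20}, and the paper's justification (in the remark following the statement) consists precisely of verifying the three hypotheses you list — positive association via $K \ge 0$ and Lemma \ref{l:pa}, the axis/rotation symmetries from isotropy, and the self-duality identity $\P[\cross(R,R)]=1/2$ from item (5) of Corollary \ref{c:cont} — together with the observation that the bound in \cite{kt20} is uniform over all stationary closed sets with these properties, hence independent of the particular $f \in \calS$.
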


\begin{remark}
Note that the constant $c$ can be chosen independently of $f \in \calS$, and in particular does not depend on $\alpha$. Indeed, in \cite{kt20} the box-crossing property is shown to hold uniformly over all random stationary closed subsets of the plane satisfying:
\begin{itemize}
\item Symmetry under axes reflection and rotation by $\pi/2$.
\item Squares are crossed with probability $1/2$ (see item (5) of Corollary \ref{c:cont}).
\item Positive associations (see Lemma \ref{l:pa}).
\end{itemize}
\end{remark}

\begin{proof}[Proof of Theorem \ref{t:main3}]
By isotropy, and since nodal lines cannot intersect, it is sufficient to prove the lower bound. Observe that, by the continuity of $f$,
\begin{align*}
&  \{ f \in \cross(R, \rho  R / 4) \}   \cap \{  -f  \in (0, 3\rho R/4)  + \cross(R, \rho R/4)   \}  \\
 & \qquad \qquad \implies \{ f \in \nodcross( R, \rho R ) \} . 
 \end{align*}
By stationarity, self-duality, and Theorem \ref{t:rsw}, 
\[ \P[  f \in \cross(R, \rho  R / 4) ] = \P[     -f  \in (0, 3\rho R/4)  + \cross(R, \rho R/4)  ]   \ge c_1 \]
for some $c_1 > 0$ depending only on $\rho$. The conclusion then follows from Proposition \ref{p:wm}.
\end{proof}

\begin{proof}[Proof of Corollary \ref{c:main3}]
By the union bound and Theorem \ref{t:main3}, for every $R \ge 1$,
\[ \P[ \nodarm(R)] \ge c_1R^{-1} \P[\nodcross(R, R)]  \ge c_2 R^{-1}\]
for constants $c_1,c_2 > 0$. Moreover, observe that 
\[ \{f \in \nodarm(R) \} \implies \{f \in \arm(R) \} \cap \{ -f \in \arm(R) \} , \]
and so, by positive association (Lemma \ref{l:pa}) and self-duality,
\[  \P[ f \in \nodarm(R) ] \le \P[ \{f \in \arm(R) \} \cap \{ -f \in \arm(R) \} ] \le \P[\arm(R) ]^2 . \]
Hence $\P[\arm(R) ] \ge   \P[ f \in \nodarm(R) ]^{1/2} \ge c_2^{1/2}R^{-1/2}$.
\end{proof}

To finish we use Corollary \ref{c:main3} (together with Theorem \ref{t:rsw}) to establish the improved density bound in Theorem \ref{t:main2}. The proof is very similar to Theorem \ref{t:main1}, so we just explain the necessary modification.

\begin{proof}[Proof of Theorem \ref{t:main2}]
The crux of the modification is to use the bound
\begin{equation}
\label{e:main21}
  \P_\ell[  \Lambda_s \leftrightarrow  \partial \Lambda_r   ]  \le c_1 s^{1/2}  \P_\ell[  \Lambda_{1/2} \leftrightarrow  \partial \Lambda_r   ]  ,
  \end{equation}
valid for all $\ell \ge \ell_c$, $1 \le s \le r/2$, and a constant $c_1 > 0$, in place of the union bound
\[  \P_\ell[  \Lambda_s \leftrightarrow  \partial \Lambda_r   ]  \le c_1 s^{d-1}  \P_\ell[  \Lambda_{1/2} \leftrightarrow  \partial \Lambda_r   ]  \]
in the proof of Claim \ref{c:algo} (and in particular \eqref{e:algo1}). So let us begin by proving \eqref{e:main21}.

\smallskip
Let $\ann(s)$ be the event that $\{f \ge 0\} \cap ( \Lambda_{2s} \setminus \Lambda_s)$ contains a circuit in the annulus $ \Lambda_{2s} \setminus \Lambda_s$. By Theorem \ref{t:rsw}, positive associations, and standard gluing arguments, there is a $c_2 > 0$ such that $\P_\ell[\ann(s)] \ge c_2$ for all $\ell \ge \ell_c$ and $s > 0$. Now observe that, for $1 \le s \le r/2$,
\[ \{ \Lambda_{1/2} \leftrightarrow \partial \Lambda_{2s} \} \cap \{\Lambda_s \leftrightarrow \partial \Lambda_r \} \cap \ann(s)  \implies \{\Lambda_{1/2} \leftrightarrow \Lambda_r \} .\]
Hence, by positive associations, for every $\ell \ge \ell_c$,
\[ \P_\ell[ \Lambda_{1/2} \leftrightarrow \partial \Lambda_r] \ge  c_2 \P_\ell[ \Lambda_{1/2} \leftrightarrow \partial \Lambda_{2s} ] \P[\Lambda_s \leftrightarrow  \partial \Lambda_r] .\]
Finally, by Corollary \ref{c:main3},
\[ \P_\ell[ \Lambda_{1/2} \leftrightarrow \partial \Lambda_{2s} ] \ge c_3 s^{-1/2} \]
for some $c_3 > 0$, and combining gives \eqref{e:main21}.

\smallskip
Now recall the notation from the proof of Theorem \ref{t:main1}. Using \eqref{e:main21} in place of the union bound in Claim \ref{c:algo} gives the following modification of the claim:

\begin{claim}
\label{c:algo2}
For every $m \in \N$, $\ell \ge \ell_c$, and $R \ge 1$, there exists a class-$m$ randomised algorithm determining $A$ such that, for all $m' \le m-1$,
\[ \sup_{i \in \Z^d} \Rev_{i, m'} \le  \frac{c_6 (2^{m'})^{1/2}}{R} \sum_{r = 1}^{\lceil R \rceil} \theta_r(\ell)  \]
where $c_6 > 0$ is a constant independent of $R$ and $\ell$.
\end{claim}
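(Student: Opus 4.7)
The algorithm will be exactly the one already constructed in the proof of Claim \ref{c:algo}: choose $r \in \{1,\dots,\lceil R\rceil\}$ uniformly at random, then iteratively reveal $f|_{C_i}$ to explore the components of $\{f \le 0\} \cap \Lambda_R$ meeting $\partial \Lambda_r$, stopping once $A$ is determined. Averaging over $r$ and using stationarity exactly as in Claim \ref{c:algo} will yield
\[ \sup_{i \in \Z^d} \Rev_{i,m'} \le \frac{2}{R} \sum_{r=1}^{\lceil R \rceil} \P_\ell[\Lambda_{2^{m'+2}} \leftrightarrow \partial \Lambda_r], \]
and the only modification is to bound the connection probabilities using \eqref{e:main21} in place of the union bound of \eqref{e:algo1}.

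I would split the sum at the threshold $r_0 := 2^{m'+3}$. For $r \ge r_0$ the hypothesis $1 \le 2^{m'+2} \le r/2$ of \eqref{e:main21} is satisfied and gives $\P_\ell[\Lambda_{2^{m'+2}} \leftrightarrow \partial \Lambda_r] \le c\, 2^{m'/2} \theta_r(\ell)$, contributing the desired $\frac{c\, 2^{m'/2}}{R} \sum_r \theta_r(\ell)$ to $\Rev_{i,m'}$. For $r < r_0$ the hypothesis of \eqref{e:main21} fails, and I would fall back on the trivial estimate $\P \le 1$, so this range contributes at most $2^{m'+3}/R$ to $\Rev_{i,m'}$. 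Absorbing this residual into the right-hand side of the claim is the crux of the argument, and is where the assumption $\ell \ge \ell_c$ enters.

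The key input is the one-arm lower bound $\theta_r(\ell) \ge c r^{-1/2}$, which follows from Corollary \ref{c:main3} at $\ell = 0$ and extends to all $\ell \ge \ell_c = 0$ (using $\ell_c = 0$ from Corollary \ref{c:main2}) by monotonicity, yielding $\sum_{r=1}^{\lceil R\rceil} \theta_r(\ell) \ge c' R^{1/2}$. When $2^{m'+3} \le R$, the elementary inequality $2^{m'+3} \le 2^{3/2} \cdot 2^{m'/2} \cdot R^{1/2}$ then shows the residual is dominated by $\frac{c\, 2^{m'/2}}{R} \sum_r \theta_r(\ell)$, giving the claim. When $2^{m'+3} > R$, the right-hand side of the claim is bounded below by a positive constant times $c_6 (2^{m'}/R)^{1/2}$, which exceeds $c_6/\sqrt{8}$, so choosing $c_6$ sufficiently large makes the trivial inequality $\Rev_{i,m'} \le 1$ already sufficient. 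The only real obstacle is thus this small-$r$ regime where \eqref{e:main21} is unavailable, and it is handled entirely by the one-arm lower bound from Corollary \ref{c:main3}.
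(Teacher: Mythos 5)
Your proof is correct and follows the same route as the paper: the paper's argument for Claim~\ref{c:algo2} is precisely to rerun the exploration algorithm from the proof of Claim~\ref{c:algo} and substitute \eqref{e:main21} for the union bound \eqref{e:algo1}. Your explicit treatment of the range $r < 2^{m'+3}$, where \eqref{e:main21} is inapplicable — absorbing the residual $2^{m'+3}/R$ via the one-arm lower bound $\sum_{r=1}^{\lceil R\rceil}\theta_r(\ell) \ge c R^{1/2}$ from Corollary~\ref{c:main3} (valid for $\ell \ge \ell_c = 0$ by monotonicity and Corollary~\ref{c:main2}) — is a detail the paper leaves implicit, and correctly identifies where the hypothesis $\ell \ge \ell_c$ enters.
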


In turn, replacing Claim \ref{c:algo} with Claim \ref{c:algo2} in the proof of \eqref{e:mens4} establishes the following modification of \eqref{e:mens4}: for every $\eps \in (0, \alpha)$ and $\ell_0 \in \R $ there exists a $c > 0$ such that, for all $R \ge 1$ and $ \ell \in [\ell_c, \ell_0) $,
 \[   \frac{d^+}{d\ell} \theta_R(\ell) \ge \frac{c \, \theta_R(\ell)}{ \big( \frac{1}{R} \sum_{r \ge 1}^{\lceil R \rceil}\theta_r(\ell) \big)^{\gamma}  }  ,\]
 for $\gamma = \min\{ 1 , \alpha - \eps \id_{\alpha = 1} \}$. Given this, the result follows from Lemma \ref{l:mens} applied to the function $ \theta_R(\ell)$ restricted to $R \in \N$ and $\ell \in [\ell_c,\ell_0]$.
\end{proof}

\begin{remark}
\label{r:onearm}
The proof of Theorem \ref{t:main2} shows that if there is an exponent $\eta_1 > 0$ such that
\[ \P[\arm(R)]  \ge c R^{-\eta_1},  \ R \ge 1, \]
then \eqref{e:main2} holds with the constant $\gamma \in (0,\alpha/(2\eta_1)) \cap (0,1]$. In particular, the mean-field bound $\beta \le 1$ holds if $\eta_1 \le \alpha/2$. Corollary \ref{c:main3} shows that one may take $\eta_1 = 1/2$.
\end{remark}

%%%%%%%%%%%%%%%%%%%%
\smallskip

\appendix

\section{}

We collect properties of Gaussian fields $f = q \star_1 W|_D$, where $q \in L^2(\R^d \times \R^+)$ such that $q(x,t) = q(-x,t)$, $W$ is the white noise on $\R^d \times \R$, and $D \subset \R^d \times \R^+$ is a Borel set.

\smallskip We let $K(x,y) = \E[f(x) f(y)] = \langle q(\cdot- x), q(\cdot-y) \rangle_{L^2(D)}$ denote the covariance kernel of $f$. If $D = \R^d \times T$ for some $T \subset \R^+$, then $f$ is stationary, and we write $K(x) = \E[f(0)f(x)] = (q|_D \star_1 q|_D)(x) $. In that case we also denote by $\rho = \mathcal{F}[K]$ the spectral measure of $f$, where $\mathcal{F}[\cdot]$ denotes the Fourier transform on $\R^d$.

\subsection{Smoothness and non-degeneracy}
\label{s:cont}
We first establish consequences of Assumption~\ref{a:main}:

\begin{lemma}
\label{l:cont}
$\,$
\begin{itemize}
\item Suppose $q$ satisfies the smoothness in Assumption \ref{a:main}. Then $f$ is a.s.\ $C^2$-smooth, and for every multi-index $k$ with $|k| \le 2$,
\begin{equation}
\label{e:equal}
\partial^k f = ( \partial^{k,0} q ) \star_1 W|_D .  
\end{equation}
\item Suppose $q$ satisfies the non-degeneracy in Assumption \ref{a:main} and $D = \R^d \times \R^+$. Then the support of the spectral measure $\rho = \mathcal{F}[K]$ contains an open set.
\end{itemize}
\end{lemma}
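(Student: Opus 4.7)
\textbf{Plan for the first item (smoothness and the formula \eqref{e:equal}).} For each multi-index $k$ with $|k|\le 2$, I would define the Gaussian field
\[
g_k(x) = \bigl((\partial^{k,0}q)\star_1 W|_D\bigr)(x),
\]
which is well-defined in $L^2(\Omega)$ by the hypothesis $\partial^{k,0}q\in L^2(\R^d\times\R^+)$. The first step is to show each $g_k$ has a continuous modification $\tilde g_k$ via Kolmogorov's continuity theorem. Using the mean value theorem (valid since $q(\cdot,t)\in C^3$) and Cauchy--Schwarz, one obtains, for $|k|\le 2$,
\[
\E[|g_k(x)-g_k(y)|^2]=\|(\partial^{k,0}q)(\cdot-x,\cdot)-(\partial^{k,0}q)(\cdot-y,\cdot)\|_{L^2(D)}^2\le C_k|x-y|^2,
\]
since $\partial^{k+e_i,0}q\in L^2$ whenever $|k|+1\le 3$; Gaussianity promotes this to arbitrary $L^p$-bounds and Kolmogorov yields a locally H\"older modification $\tilde g_k$. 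The second step is to identify $\tilde g_{k+e_i}$ with the classical derivative $\partial_{e_i}\tilde g_k$ pathwise. A stochastic Fubini (whose integrability hypothesis is $\partial^{k+e_i,0}q\in L^2$) combined with classical integration by parts in the $\R^d$-variable shows that for every $\varphi\in C_c^\infty(\R^d)$,
\[
\int\tilde g_{k+e_i}(x)\varphi(x)\,dx=-\int\tilde g_k(x)\partial_{e_i}\varphi(x)\,dx\quad\text{a.s.},
\]
so $\tilde g_{k+e_i}$ is the distributional $e_i$-derivative of $\tilde g_k$; continuity of both upgrades this to classical $C^1$ regularity with $\partial_{e_i}\tilde g_k=\tilde g_{k+e_i}$. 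Iterating up to $|k|=2$ produces a $C^2$ modification of $f=\tilde g_0$ satisfying \eqref{e:equal}.

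\textbf{Plan for the second item (spectral support).} Since $D=\R^d\times\R^+$, the field $f$ is stationary and
\[
K(x)=(q\star_1 q)(x)=\int_{\R^+}(q(\cdot,t)\star q(\cdot,t))(x)\,dt.
\]
Plancherel and Fubini (using $q\in L^2(\R^d\times\R^+)$) show that
\[
h(\xi):=\int_{\R^+}|\hat q(\xi,t)|^2\,dt\in L^1(\R^d),\qquad K(x)=\int_{\R^d}e^{2\pi i x\cdot\xi}h(\xi)\,d\xi,
\]
where we used that $q(\cdot,t)$ is real and even. Hence $\rho$ is absolutely continuous with density $h$, and it suffices to exhibit an open $U\subset\R^d$ on which $h$ is pointwise positive. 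Let $t_0$ be the scale provided by the non-degeneracy assumption: $q(\cdot,t_0)\not\equiv 0$ and $q(\cdot,t')\to q(\cdot,t_0)$ in $L^1$ as $t'\to t_0$. Then in particular $q(\cdot,t_0)\in L^1(\R^d)$, so $\hat q(\cdot,t_0)$ is continuous, and by Fourier injectivity it is not identically zero; pick $\xi_0$ with $\hat q(\xi_0,t_0)\ne 0$ and set $c=|\hat q(\xi_0,t_0)|/2$, so that $|\hat q(\cdot,t_0)|>c$ on some open neighborhood $U\ni\xi_0$. By the triangle inequality for the Fourier transform,
\[
\|\hat q(\cdot,t')-\hat q(\cdot,t_0)\|_\infty\le\|q(\cdot,t')-q(\cdot,t_0)\|_{L^1}\longrightarrow 0
\]
as $t'\to t_0$, so there is a $\delta>0$ with $|\hat q(\xi,t)|>c/2$ for every $(\xi,t)\in U\times([t_0-\delta,t_0+\delta]\cap\R^+)$. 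Therefore $h(\xi)\ge\delta(c/2)^2>0$ on $U$, and $U\subseteq\mathrm{supp}(\rho)$.

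\textbf{Main obstacle.} The chief subtlety is in part one: constructing a single a.s.\ $C^2$ modification of $f$ that \emph{simultaneously} realises the identities $\partial^k\tilde g_0=\tilde g_k$ pathwise, rather than merely $L^2$-differentiability up to order $2$. The distributional-derivative route via stochastic Fubini and integration by parts is the cleanest way I know to avoid messy diagonal extractions from difference quotients. Part two is essentially a soft application of Fourier continuity once the density formula for $\rho$ is in hand.
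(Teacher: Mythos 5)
Both items of your proof are correct. For the second item your argument is essentially the one in the paper: both show $\hat K(\xi) = \int_{\R^+} |\hat q(\xi,t)|^2\,dt$ via Plancherel/Fubini, use the $L^1$-continuity hypothesis to get joint continuity of $\hat q$ near $(\xi_0, t_0)$, and then deduce a pointwise lower bound for the density on an open set (the paper phrases it in terms of the partial spectral measure $\rho_T = \int_T \hat q(\cdot,s)^2\,ds$ together with $\rho \ge \rho_T$, but the content is the same).

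For the first item you take a genuinely different route. The paper truncates $q$ to $q_n(x,t)=\psi(|x|/n)\,\id_{t\le n}\,q(x,t)$, observes that $\partial^{k,0}q_n$ is bounded and compactly supported so $K_n \in C^{3,3}$ and hence $f_n$ is $C^2$ (citing \cite[Prop.\ 3.3]{mv20}), then shows $(f_n)$ is Cauchy in a covariance seminorm involving mixed derivatives up to order $3$, and concludes $C^2$ regularity of the limit by Kolmogorov; the identity \eqref{e:equal} is obtained by passing to $L^2$-limits. You instead construct each candidate derivative field $g_k = (\partial^{k,0}q)\star_1 W|_D$ directly, prove a Lipschitz-type increment bound via FTC and Cauchy--Schwarz (note that the translation step only needs $D \subseteq \R^d \times \R^+$ and monotonicity of the integral, so this goes through for general Borel $D$), apply Kolmogorov and Gaussian hypercontractivity, and then identify $\tilde g_{k+e_i}$ as the classical $e_i$-derivative of $\tilde g_k$ by showing it is the distributional derivative (stochastic Fubini, valid since $\|\partial^{k,0}q\|_{L^2(D)}\|\partial_{e_i}\varphi\|_{L^1}<\infty$, plus a countable dense set of test functions and continuity of both fields). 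Your route is more self-contained — it avoids citing external regularity results for $C^{3,3}$ covariances — at the cost of the Fubini and distribution-theory bookkeeping, and it highlights the distributional-derivative picture rather than the covariance-smoothness picture. Both are standard and correct.
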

\begin{proof}
For the first item, fix $\psi: \R^+ \to [0, 1]$ a smooth function such that $\psi(x) = 1$ for $x \le 1/4$ and $\psi(x) = 0$ for $x \ge 1/2$, and for every $n \in \N$, define the Gaussian field $f_n(x) = q_n \star_1 W|_D$, where $q_n(x,t) = \psi(|x|/n)  \id_{t \le n} \, q(x,t)$. Let $K_n(x,y)= \E[f_n(x) f_n(y)]$ be the covariance kernel of $f_n$. Consider a multi-index $k$ with $|k| \le 3$ and $D' \subset \R^d$ a compact set. Since $\partial^{k,0} q$ is locally bounded and $\psi$ is smooth, $ \partial^{k,0}  q_n$ is bounded and has compact support. Hence, using the dominated convergence theorem to exchange derivative and convolution, we see that $K_n \in C^{3,3}(D')$, and the conclusion for $f_n$ in place of $f$ follows (see, e.g., \cite[Proposition 3.3]{mv20} for details). On the other hand, since $\partial^{k,0} q \in L^2(\R^d \times \R^+)$ and $\psi$ is smooth, $(f_n)_{n \in \N}$ is a Cauchy sequence in the seminorm
\[  \| f_n \| = \sup_{  x \in D' }  \sup_{|k_1|, |k_2| \le 3}  \partial^{k_1}_x \partial^{k_2}_y \E[  f_n(x)  f_n(y) ] . \] 
In particular, by Kolmogorov's theorem (see \cite[Appendix A.9]{ns16}), $f_n \to f$ in $C^2(D')$ a.s., and so $f$ is $C^2$-smooth. Moreover, for every $|k| \le 2$ and $x \in D'$, both $\partial^k f(x)$ and $((\partial^{k,0} q) \star_1 W|_D) (x)$ are $L^2$-limits of $\partial^k f_n(x) = ((\partial^{k,0} q_n) \star_1 W|_D) (x)$. Hence $\E[ ( \partial^k f(x) - ((\partial^{k,0} q) \star_1 W|_D) (x) )^2] = 0$, and so by considering a continuous modification of $(\partial^{k,0} q) \star_1 W|_D$ we have \eqref{e:equal}.
 
\smallskip 
For the second item, recall that there exists $t \ge 0$ such that $q(\cdot,t)$ is non-zero and $q(\cdot,t') \to q(\cdot, t)$ in $L^1(\R^d)$ as $t' \to t$. In particular, this implies that $ \mathcal{F}[ q(\cdot,t) ](x)$ is continuous and not identically zero on $\R^d \times (t \pm \delta)$ for sufficiently small $\delta$. So let $T = t + (-\delta,\delta)$, and let $f_T = q|_{\R^d \times T} \star_1 W$. By Fubini's theorem, the spectral measure of $f_T$ is
\[ \rho_T(x) = \int_{T} \mathcal{F}[ q(\cdot,t) ]^2 \, dt ,\]
and hence, by continuity, the support of $\rho_T$ contains an open set. By linearity of the Fourier transform, $\rho \ge \rho_T$, which concludes the proof.
 \end{proof}

\begin{remark}
\label{r:smooth}
The same proof shows that if $q(\cdot,t)$ is smooth for every $t \ge 0$, and for every multi-index $k$, $\partial^{k,0} q \in L^2(\R^d \times \R^+)$ and $\partial^{k,0} q$ is locally bounded on $\R^d \times \R^+$, then $f = q \star_1 W$ is a.s.\ smooth.
\end{remark}

The following are standard corollaries of the conclusion of Lemma \ref{l:cont}:
\begin{corollary}
\label{c:cont}
Suppose $q$ satisfies Assumption \ref{a:main} and $D = \R^d \times \R^+$. Then:
\begin{enumerate}
\item  $(f(x), \nabla f(x))$ is non-degenerate for every $x \in \R^d$.
\item The critical points of $f$, as well as its restriction to a smooth hypersurface, are a.s.\ locally finite and have distinct critical levels that are not equal to a fixed level $\ell \in \R$.
\item For every $\ell \in \R$, $\{f = \ell\}$ is a.s.\ a collection of $C^2$-smooth simple hypersurfaces, which a.s.\ intersect a fixed smooth hypersurface transversally.
\item For all $0 < r \le R$, $\{f \in \Lambda_r \leftrightarrow \partial \Lambda_R\}$ is a continuity event.
\item If $d=2$, then for all $R > 0$, $\P[A_R] + \P[B_R] = 1$, where $A_R$ (resp.\ $B_R$) is the event that  $[0,R]^2 \cap \{f \ge 0\}$ contains a path that crosses $[0, R]^2$ from left-to-right (resp.\ top-to-bottom). In particular, if $q$ is invariant under axes reflection and rotation by $\pi/2$, then $\P[A_R] = \P[B_R] = 1/2$.
\end{enumerate}
\end{corollary}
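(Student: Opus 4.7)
The plan is to derive all five items from the two conclusions of Lemma~\ref{l:cont}, namely the $C^2$-smoothness together with \eqref{e:equal}, and the fact that the spectral measure $\rho$ has support containing an open set. Items (1) and (2) are pure non-degeneracy statements that fuel items (3)--(5).

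For item (1), I would use the spectral representation. By stationarity and \eqref{e:equal}, for any $a = (a_0, a_1, \ldots, a_d) \in \R^{d+1}$,
\[
  \var\Big[ a_0 f(0) + \sum_{i=1}^d a_i \partial_i f(0) \Big]
    = \int_{\R^d}  \Big|a_0 + i \sum_{i=1}^d a_i \xi_i \Big|^2 \, d\rho(\xi) .
\]
Since $\rho$ has support containing an open set, the integrand vanishes $\rho$-a.e.\ only if $a = 0$. Hence the covariance matrix of $(f(x), \nabla f(x))$ is positive definite.

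For item (2), I would apply Bulinskaya's lemma (e.g.\ \cite[Proposition 6.11]{aw09}). The same argument as in item (1), extended to include second derivatives, shows that $(f, \nabla f, \partial^{ij} f)$ is non-degenerate; combined with $C^2$-smoothness this gives, for any fixed $\ell$, that a.s.\ $\{x \in D : \nabla f(x) = 0, f(x) = \ell\}$ is empty on any compact $D$. Applied to pairs of distinct critical points and to the restriction of $f$ to a fixed smooth hypersurface (where the same non-degeneracy of the tangential gradient holds, again by the open-support property of $\rho$), this yields the local finiteness and distinctness of critical levels. Item (3) then follows from item (1) via the implicit function theorem, and from item (2) applied to the restriction of $f$ to the chosen hypersurface.

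For item (4), I would note that, writing $A = \{\Lambda_r \leftrightarrow \partial \Lambda_R\}$, continuity of $\ell \mapsto \P[f + \psi + \ell \in A]$ reduces to showing $\P[f + \psi + \ell \in \partial A] = 0$ for every $\ell$ and smooth $\psi$. A configuration in $\partial A$ must have $\{f + \psi + \ell = 0\}$ either touching $\Lambda_r$ or $\partial \Lambda_R$ tangentially, or containing a critical point on these sets; by item (3) applied to $g = f + \psi + \ell$ (whose covariance structure inherits the open-support property of $\rho$, since $\psi + \ell$ is deterministic), such events have probability zero. Finally, item (5) is a standard planar duality argument: by item (3), a.s.\ $\{f = 0\}$ is a locally finite union of smooth simple curves intersecting $\partial [0,R]^2$ transversally, and $\{f \ge 0\}$, $\{f \le 0\}$ have common boundary $\{f=0\}$; then a classical combinatorial lemma (see e.g.\ \cite{alex96}) shows that exactly one of $A_R$, $B_R$ occurs, giving $\P[A_R] + \P[B_R] = 1$, with equality to $1/2$ under the stated symmetry by swapping $f$ with $-f \circ \sigma$ for $\sigma$ a rotation by $\pi/2$.

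The main obstacle I anticipate is item (4): the exceptional configurations to rule out are not exactly zero sets of $f + \psi + \ell$ but rather topological degeneracies of $\{f + \psi + \ell \ge 0\}$ relative to $\Lambda_r$ and $\partial\Lambda_R$; some care is needed to enumerate the finitely many ways tangency at the boundary can occur and then to invoke the Bulinskaya-type reasoning from item (2) uniformly over these.
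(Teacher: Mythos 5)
Your proposal is correct and follows essentially the same route as the paper: the paper's own proof simply cites a non-degeneracy lemma for item (1), invokes Bulinskaya's lemma for item (2), and declares items (3)--(5) to be standard consequences of (2) and $C^2$-smoothness, which is exactly the skeleton you flesh out (your spectral-measure argument for (1) is the standard proof of the cited lemma, using the open-support conclusion of Lemma~\ref{l:cont}). The extra detail you supply, in particular for the continuity event in item (4), goes beyond what the paper writes but is consistent with it.
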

\begin{proof}
The first item is proven in \cite[Lemma A.2]{bmm20} respectively, the second item is a consequence of Bulinskaya's lemma \cite[Lemma 11.2.10]{at07}, and the remaining items are standard consequences of the second item and the fact that $f$ is $C^2$-smooth.
\end{proof}

\begin{corollary}
\label{c:cont2}
Suppose $q$ satisfies only the non-degeneracy in Assumption \ref{a:main} and $D = \R^d \times \R^+$. Then for all $0 < r \le R$, $\{f|_{\Z^d} \in \Lambda_r \leftrightarrow \partial \Lambda_R\}$ is a continuity event.
\end{corollary}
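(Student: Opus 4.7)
The plan is to exploit the absolute continuity of each individual coordinate $f(i)$, $i \in \Z^d$, to show that the indicator of the connection event is, on a full-measure set, locally constant in $\ell$, and then conclude by dominated convergence.

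First I would observe that the event $A = \{\Lambda_r \leftrightarrow \partial \Lambda_R\}$ applied to $f|_{\Z^d}$ depends only on whether $f(i) \ge 0$ for each $i \in \Z^d$, since connectivity in $\{f|_{\Z^d} \ge 0\}$ is determined by which vertices carry a non-negative field value. Consequently, for any smooth $\psi$ and any $\ell \in \R$,
\[ \id_{\{f|_{\Z^d} + \psi|_{\Z^d} + \ell \in A\}} = \Phi\big( (\id_{f(i) + \psi(i) + \ell \ge 0})_{i \in \Z^d} \big) \]
for some deterministic $\Phi : \{0,1\}^{\Z^d} \to \{0, 1\}$.

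Next I would verify that each $f(i)$ has a non-degenerate Gaussian law. The proof of the second item of Lemma \ref{l:cont} uses only the non-degeneracy in Assumption \ref{a:main} together with $D = \R^d \times \R^+$, which are exactly the hypotheses here; hence the spectral measure $\rho$ has support with non-empty interior, so $\rho$ is non-zero, and therefore $\var[f(i)] = K(0) = \rho(\R^d) > 0$ for every $i \in \Z^d$. Each $f(i)$ is thus absolutely continuous with respect to Lebesgue measure, and by countable subadditivity over the countable set $\Z^d$, for every $\ell_0 \in \R$,
\[ \P\big[ \exists\, i \in \Z^d :\, f(i) + \psi(i) + \ell_0 = 0 \big] \le \sum_{i \in \Z^d} \P\big[ f(i) = -\psi(i) - \ell_0 \big] = 0. \]

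On the full-measure complement of this event, each indicator $\id_{\{f(i) + \psi(i) + \ell \ge 0\}}$ is locally constant in $\ell$ at $\ell_0$, and hence so is the displayed indicator above. Dominated convergence then yields continuity of $\ell \mapsto \P_\ell[f|_{\Z^d} + \psi|_{\Z^d} \in A]$ at every $\ell_0 \in \R$, confirming that $A$ is a continuity event. The only mildly delicate point is reducing to $K(0) > 0$ from the bare non-degeneracy assumption, which is handled cleanly by invoking Lemma \ref{l:cont}; beyond that, no substantive obstacle arises.
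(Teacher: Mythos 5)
Your proof is correct and follows essentially the same route as the paper, whose entire argument is that the open support of $\rho$ makes $K$ strictly positive definite, whence ``the result follows immediately'' by exactly the local-constancy-plus-dominated-convergence reasoning you spell out. If anything your version is slightly leaner: you only need the marginal non-degeneracy $K(0)=\rho(\R^d)>0$ (plus the observation that the event depends on the finitely many vertices in $\Lambda_R\cap\Z^d$, so the local constancy of the indicator is uniform), whereas the paper invokes strict positive definiteness of $K$ via an external theorem of Wendland.
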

\begin{proof}
Since the support of $\rho$ contains an open set, $K$ is strictly positive definite \cite[Theorem 6.8]{wen05}, and the result follows immediately.
\end{proof}

\subsection{Orthonormal expansion}
We next present a standard orthonormal expansion of $f$:

\begin{lemma}
\label{l:c0conv}
Let $(\varphi_j)_{j \in \N}$ be an orthonormal basis of $L^2(D)$, and let $Z = (Z_j)_{j \in \N}$ be a sequence of i.i.d.\ standard Gaussians. Then
\[ f_n := \sum_{j \ge 1}^n Z_j (q \star_1 \varphi_j) \Rightarrow f  \]
in the sense of finite-dimensional distributions. If additionally $f$ is continuous, the convergence holds in law in the $C^0(\R^d)$-on-compacts topology.

\end{lemma}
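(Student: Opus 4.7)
The proof has two essentially independent parts. For finite-dimensional convergence, the plan is to exploit the standard orthonormal expansion of white noise with respect to the basis $(\varphi_j)$. Setting $Z_j := \int_D \varphi_j \, dW$ (which are i.i.d.\ standard Gaussians by the It\^o isometry), every $g \in L^2(D)$ satisfies $\int_D g\, dW = \sum_{j} Z_j \langle g, \varphi_j\rangle_{L^2(D)}$ with convergence in $L^2(\Omega)$. Applied to $g = q(\cdot - x, \cdot)$, which lies in $L^2(D)$ because $q$ does, this yields $f(x) = \sum_{j} Z_j (q \star_1 \varphi_j)(x)$ in $L^2(\Omega)$, and Parseval then gives $\E[f_n(x) f_n(y)] \to K(x,y)$ pointwise. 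Since $f_n$ and $f$ are centred Gaussian, convergence of covariances yields convergence of finite-dimensional distributions.

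For convergence in the $C^0$-on-compacts topology under the additional hypothesis that $f$ is continuous, I would fix a compact $K \subset \R^d$ and combine the fidi convergence with tightness of $(f_n|_K)_{n \ge 1}$ in $C(K)$. Two observations make the tightness routine. First, each $q \star_1 \varphi_j$ is continuous because $x \mapsto q(\cdot - x, \cdot)$ is continuous in $L^2(D)$, so $f_n \in C(K)$ almost surely. Second, by Parseval applied to $q(\cdot-x_1,\cdot) - q(\cdot-y_1,\cdot) - q(\cdot - x_2, \cdot) + q(\cdot - y_2, \cdot)$, for all $x_i, y_i \in K$,
\[ \E\bigl[\bigl((f_n(x_1) - f_n(y_1)) - (f_n(x_2) - f_n(y_2))\bigr)^2\bigr] \le \E\bigl[\bigl((f(x_1) - f(y_1)) - (f(x_2) - f(y_2))\bigr)^2\bigr], \]
so the increment process $(x,y) \mapsto f_n(x) - f_n(y)$ has smaller intrinsic pseudo-metric than that of $f$. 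The Sudakov--Fernique inequality then yields
\[ \E\Bigl[\sup_{x,y \in K,\, |x-y|<\delta} (f_n(x) - f_n(y))\Bigr] \le \E\Bigl[\sup_{x,y \in K,\, |x-y|<\delta} (f(x) - f(y))\Bigr], \]
which tends to $0$ as $\delta \to 0$ by continuity of $f$, uniformly in $n$. Combined with the pointwise variance bound $\E[f_n(x_0)^2] \le \E[f(x_0)^2]$ and Gaussian concentration (Borell--TIS), this gives tightness in $C(K)$, and hence convergence in law in $C(K)$.

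The main technical step is the tightness argument, but it reduces to a direct application of Sudakov--Fernique once the increment comparison is observed. A slicker, essentially equivalent, alternative would be to invoke the It\^o--Nisio theorem in the separable Banach space $C(K)$: the independent symmetric series $\sum_j Z_j (q \star_1 \varphi_j)$ converges in $C(K)$ almost surely if and only if its partial sums converge in distribution, and the latter follows from the already-established fidi convergence together with the uniform variance bound. Either route yields convergence in $C(K)$ for every compact $K$, and hence convergence in the $C^0$-on-compacts topology.
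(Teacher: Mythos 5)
Your proposal is correct. The finite-dimensional part is essentially the paper's argument (Parseval/Bessel for the covariances of centred Gaussian vectors; the paper phrases it via Cram\'er--Wold, you via $L^2$-convergence of the white-noise expansion, which is the same computation and additionally exhibits the natural coupling $Z_j = \int_D \varphi_j\, dW$). For the locally uniform convergence the paper is much terser: it notes that each $q \star_1 \varphi_j$ is continuous and then invokes the It\^o--Nisio theorem directly, exactly your ``slicker alternative''. Your primary route --- the Bessel-type domination $\E[(f_n(x)-f_n(y))^2] \le \E[(f(x)-f(y))^2]$ (and its second-order version for increments of increments), Sudakov--Fernique, and the standard tightness criterion in $C(K)$ --- is a genuinely different and more self-contained argument; it buys a modulus-of-continuity estimate uniform in $n$ and avoids quoting It\^o--Nisio, at the cost of being longer. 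Note that for the equicontinuity step Markov's inequality applied to the nonnegative variable $\sup_{|x-y|<\delta}(f_n(x)-f_n(y))$ already suffices; Borell--TIS is not needed there. One small caution on your alternative route: as literally phrased (``converges a.s.\ iff the partial sums converge in distribution, and the latter follows from fidi convergence'') it is circular, since fidi convergence alone does not give convergence in distribution in $C(K)$; the correct application of It\^o--Nisio here is the version in which one tests the partial sums against elements of the dual of $C(K)$ (finite signed measures), where convergence follows from the uniform $L^2$ bound $\E[(f_n(x)-f(x))^2] \le K(x,x)$ --- this is the form used in the reference the paper cites.
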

\begin{proof}
For each $x \in \mathbb{R}^d$, $f_n(x) \Rightarrow f(x)$ in law since they are centred Gaussian random variables and
\[  \mathbb{E} \Big[ \Big(\sum_{j \ge 1}^n Z_j (q \star \varphi_j)(x)  \Big)^2 \Big] = \sum^n_{j \ge 1} \Big(\int_S q(x-s) \varphi_j(s) \, ds \Big)^2  \to   \int_S q(x-s)^2  \, ds    = \mathbb{E}[f(x)^2]  \]
 by Parseval's identity. Applying the same argument to arbitrary linear combinations, this proves the first statement by the Cram\'{e}r-Wald theorem.
 
For the second, note that, for every $n \ge 1$, $q \star_1 \varphi_n$ is continuous as a convolution of $L^2$ functions, and hence $f_n$ is a continuous Gaussian field. Since $f$ is continuous by assumption, the claim follows from the It\^{o}-Nisio theorem (see \cite[Lemma A.2]{dm21}). 
\end{proof}

\subsection{The reproducing kernel Hilbert space}
\label{s:rkhs}

The \textit{reproducing kernel Hilbert space} (RKHS) associated to $f$ is the space
\[   H = (h^\varphi)_{\varphi \in L^2(D)} = (  q \star_1 \varphi  \big)_{ \varphi \in L^2(D)}   \]
equipped with the inner product 
\[  \langle h^{\varphi_1}, h^{\varphi_2} \rangle_{H} =     \langle  q \star_1 \varphi^1 , q \star_1 \varphi^2 \rangle_{H} = \langle \varphi_1, \varphi_2 \rangle_{L^2(D)} .\]
This is the unique Hilbert space of functions on $\R^d$ satisfying the \textit{reproducing property}
\[ \langle  K(\cdot,x), h^\varphi \rangle_{H} =  \langle ((q \star_1 q)(\cdot-x)),  q \star_1 \varphi  \rangle_{H}  =  \langle  q(\cdot-x), \varphi \rangle_{L^2(D)} =  (q \star_1 \varphi)(x) = h^\varphi(x) . \]

Now consider a pair of continuous Gaussian fields $(f_1,f_2)  = ( q_1 \star_1 W|_D, q_2 \star_1 W|_D)$, where $q_1,q_2 \in L^2(\R^d \times \R^+)$. Then the RKHS of $f_1$ and $f_2$ can be naturally paired together to form the Hilbert space
\[   H^2  = \big( ( h^{\varphi}, h^{\varphi} ) \big)_{\varphi \in L^2(D)} =  \big(  ( q_1 \star_1 \varphi , q_2 \star_1 \varphi) \big)_{\varphi \in L^2(D)}   \] 
equipped with the inner product 
\[  \big\langle (h^{\varphi_1}_1,h^{\varphi_1}_2), (h^{\varphi_2}_1, h^{\varphi_2}_2) \big\rangle_{H^2} =   \langle h_1^{\varphi_1}, h_1^{\varphi_2} \rangle_{H}  =  \langle h_2^{\varphi_1}, h_2^{\varphi_2} \rangle_{H}  = \langle \varphi_1, \varphi_2 \rangle_{L^2(D)} .\]

\smallskip
The main property of the space $H^2$ that we need is the following:

\begin{lemma}
\label{l:ebgen}
For every $h = (h_1^\varphi,h^\varphi_2) \in H^2$ and event $A$,
\[ \P[ (f_1  + h^\varphi_1, f_2 + h^\varphi_2) \in A ] \ge   \P[ (f_1, f_2)   \in A] \exp \Big(   -  \frac{\|h^\varphi\|_{H^2}^2}{2  \P[ (f_1, f_2)  \in A]}  - 1  \Big)   .\]
\end{lemma}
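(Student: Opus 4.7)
The plan is to reduce the lemma to a one-dimensional Gaussian shift estimate via the Cameron--Martin formula for white noise, and then combine Jensen's inequality with Cauchy--Schwarz and AM--GM. Set $\sigma^2 := \|h^\varphi\|_{H^2}^2 = \|\varphi\|_{L^2(D)}^2$ (the isometry being built into the definition of the RKHS inner product), $Z := \int_D \varphi\, dW \sim N(0,\sigma^2)$, $B := \{(f_1,f_2) \in A\}$, and $p := \P[B]$.

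First I would apply the Cameron--Martin shift formula to the white noise $W$: translating $W$ by $\varphi\id_D \in L^2(\R^d\times\R^+)$ shifts $f_i = q_i \star_1 W|_D$ by exactly $q_i \star_1 \varphi = h_i^\varphi$, while introducing the Radon--Nikodym density $e^{Z - \sigma^2/2}$. Consequently
\[ \P\bigl[(f_1+h_1^\varphi,\, f_2+h_2^\varphi) \in A\bigr] \;=\; \E\bigl[\id_B \, e^{Z - \sigma^2/2}\bigr] \;\ge\; p \, \exp\!\bigl(\E[Z\mid B] - \sigma^2/2\bigr), \]
where the inequality is Jensen's inequality applied to the conditional expectation $\E[\,\cdot \mid B\,]$.

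To lower-bound $\E[Z\mid B]$, note that since $\E[Z]=0$ we have $\E[\id_B Z] = \E[(\id_B - p)Z]$, and Cauchy--Schwarz gives $\E[\id_B Z] \ge -\sigma\sqrt{p(1-p)}$, equivalently $\E[Z\mid B] \ge -\sigma\sqrt{(1-p)/p}$. Then AM--GM (i.e.\ $\sqrt{ab} \le (a+b)/2$ with $a = \sigma^2(1-p)/p$, $b=1$) yields
\[ \E[Z \mid B] - \frac{\sigma^2}{2} \;\ge\; -\frac{\sigma^2(1-p)}{2p} - \frac{1}{2} - \frac{\sigma^2}{2} \;=\; -\frac{\sigma^2}{2p} - \frac{1}{2} \;\ge\; -\frac{\sigma^2}{2p} - 1, \]
establishing the claim (in fact with an extra $\tfrac12$ to spare in the exponent).

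The only mildly delicate point is the Cameron--Martin step: one must verify that translating the ambient white noise $W$ by $\varphi\id_D$ correctly implements the RKHS-shift of the pair $(f_1,f_2)$, which follows from the linearity of the stochastic convolution $\omega \mapsto q_i \star_1 \omega|_D$ together with the standard Cameron--Martin formula $\E[F(W+\varphi\id_D)] = \E[F(W)\,e^{Z-\sigma^2/2}]$. Everything after this identification is a short chain of convex inequalities and hence essentially routine.
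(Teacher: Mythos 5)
Your proof is correct, and it takes a genuinely different route from the paper. The paper first establishes the Kullback--Leibler bound $d_{KL}\bigl((f_1,f_2)\,\|\,(f_1+h_1^\varphi,f_2+h_2^\varphi)\bigr)\le \|h\|_{H^2}^2/2$ by passing through the orthonormal expansion of Lemma~\ref{l:c0conv} (choosing $\varphi_1=\varphi/\|\varphi\|$) and invoking the contraction of relative entropy under measurable maps, and then applies the general entropy inequality $\P[Y\in A]\ge\P[X\in A]\exp\bigl(-d_{KL}(X\|Y)/\P[X\in A]-1\bigr)$ cited from \cite[Lemma~2.5]{dm21b}. You instead compute the Radon--Nikodym density $e^{Z-\sigma^2/2}$ directly via Cameron--Martin for white noise and then run Jensen, Cauchy--Schwarz, and AM--GM from scratch, which reproves the cited general fact in the Gaussian case and even sharpens the exponent to $-1/2$. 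The trade-off is that the paper deliberately avoids invoking Cameron--Martin --- indeed the remark following the lemma says the setting is ``slightly non-standard,'' so the authors prefer the finite-dimensional approximation plus KL-contraction. Your use of Cameron--Martin is legitimate (shifting white noise by an element $\varphi\id_D$ of its Cameron--Martin space $L^2(\R^d\times\R^+)$ is standard), but to match the paper's level of care you would spell out that the formula is applied either to the abstract white noise measure or, equivalently, to the finite-dimensional approximation $f_i^n=\sum_{j\le n}Z_j(q_i\star_1\varphi_j)$ from Lemma~\ref{l:c0conv} followed by a passage to the limit. The chain of convex inequalities at the end is clean and fully self-contained, which the paper's appeal to an external lemma is not.
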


\begin{proof}
We first claim that
\begin{equation}
\label{e:cmt}
 d_{KL} \big(    (f_1,f_2)  || (f_1+h^\varphi_1, f_2+ h^\varphi_2 ) \big)  \le \frac{\|h\|^2_{H^2}}{2} , 
 \end{equation}
where $d_{KL}(X ||Y)$ denotes the relative entropy from $Y$ to $X$. To see this, observe that Lemma~\ref{l:c0conv} gives the decomposition
\[ (f_1,f_2) \stackrel{d}{=} \Big( \sum_{j \ge 1} Z_j  (q_1 \star_1 \varphi_j) ,   \sum_{j \ge 1} Z_j (q_2 \star_1 \varphi_j) \Big)  ,\]
where $\varphi_j$ is an orthonormal basis of $L^2(D)$, $Z_j$ are i.i.d.\ standard Gaussians, and we are free to choose $\varphi_1 = \varphi / \| \varphi \|_{L^2(D)}$. Similarly, 
\begin{align*}
& (f_1+h^\varphi_1, f_2+ h^\varphi_2 ) \\
& \qquad = \Big( \sum_{j \ge 1} (Z_j +   \| \varphi \|_{L^2(D)} \id_{j=1}) (q_1 \star_1 \varphi_j),   \sum_{j \ge 1} (Z_j +   \| \varphi \|_{L^2(D)} \id_{j=1} )(q_2 \star_1 \varphi_j) \Big) . 
\end{align*}
Since relative entropy contracts under measurable mappings,
\[  d_{KL} \big(    (f_1,f_2)  || (f_1+h^\varphi_1, f_2+ h^\varphi_2 ) \big)   \le  d_{KL} \big(  Z_1 || Z_1 +  \| \varphi \|_{L^2(D)}  \big)    =  \frac{    \| \varphi \|_{L^2(D)}^2  }{2}  = \frac{ \|h\|_{H^2}^2 }{2} ,\]
which gives \eqref{e:cmt}. The result then follows from the general fact that
\[ \P[ Y \in A ] \ge   \P[ X  \in A] \exp \Big(   -  \frac{  d_{KL}(X || Y)  }{ \P[X \in A] }  - 1  \Big)   \]
for arbitrary random variables $X$ and $Y$ taking values in a common measurable space (see \cite[Lemma 2.5]{dm21b}).
\end{proof}

\begin{remark}
In fact the Cameron-Martin theorem shows that \eqref{e:cmt} is actually an equality, but since the setting is slightly non-standard we prefer not to evoke this theorem.
\end{remark}

\subsection{Positive associations}
To finish we state the well-known property of positive association for positively-correlated Gaussian fields:

\begin{lemma}[Positive associations]
\label{l:pa}
Suppose $K \ge 0$. Then for increasing events $A$ and $B$,
\[ \P[A \cap B] \ge \P[A] \P[B] .\]
\end{lemma}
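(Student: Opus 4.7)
The plan is to invoke the classical FKG inequality for Gaussian vectors with non-negative covariance, known as Pitt's theorem, after reducing to a finite-dimensional cylindrical setting.

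\smallskip
First I would reduce to finite dimensions. Any increasing event $A$ (with respect to pointwise ordering of the field) can be approximated by increasing cylindrical events depending only on $(f(x_i))_{i \le n}$ for a dense countable collection $(x_i)_{i \in \N} \subset \R^d$ (or $\Z^d$ in the discrete case), using the a.s.\ continuity of $f$ provided by Lemma \ref{l:cont} together with a monotone class argument. It therefore suffices to prove the inequality for events of the form $A = \{(f(x_i))_{i \le n} \in A'\}$ and $B = \{(f(x_i))_{i \le n} \in B'\}$, where $A', B' \subset \R^n$ are increasing Borel sets. Writing $X = (f(x_i))_{i \le n}$, which is a centred Gaussian vector with covariance $\Sigma_{ij} = K(x_i, x_j) \ge 0$, the claim reduces to showing that $\E[\id_{A'}(X) \id_{B'}(X)] \ge \E[\id_{A'}(X)] \E[\id_{B'}(X)]$.

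\smallskip
Next, by standard mollification, it suffices to prove $\textrm{Cov}(F(X), G(X)) \ge 0$ for bounded smooth coordinatewise-increasing $F, G : \R^n \to \R$. Here the key step is an interpolation argument. I would introduce a coupling $(X^{(1)}_t, X^{(2)}_t)_{t \in [0,1]}$ of two centred Gaussian vectors with marginal covariance $\Sigma$ and cross-covariance $\E[X^{(1)}_{t,i} X^{(2)}_{t,j}] = (1-t)\Sigma_{ij}$, so that at $t=0$ the two vectors coincide and at $t=1$ they are independent. Gaussian integration by parts (differentiating under the expectation and using $\partial_i$ on $F$, $\partial_j$ on $G$) yields the identity
\[ \frac{d}{dt} \E[F(X^{(1)}_t) G(X^{(2)}_t)] = -\sum_{i,j} \Sigma_{ij} \, \E\big[ \partial_i F(X^{(1)}_t) \, \partial_j G(X^{(2)}_t) \big]. \]
Since $\Sigma_{ij} \ge 0$ by hypothesis and $\partial_i F, \partial_j G \ge 0$ by monotonicity, the right-hand side is $\le 0$, so
\[ \E[F(X) G(X)] = \E[F(X^{(1)}_0) G(X^{(2)}_0)] \ge \E[F(X^{(1)}_1) G(X^{(2)}_1)] = \E[F(X)] \E[G(X)], \]
as required.

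\smallskip
The main technical point is not in the Gaussian interpolation, which is routine, but in verifying that one may legitimately pass from cylindrical increasing events to the general increasing events that arise in our applications (e.g.\ $\{\Lambda_{1/2} \leftrightarrow \partial \Lambda_R\}$ and the nodal events used in Section \ref{s:wm}). For such connection events the continuity of $f$ and item (4) of Corollary \ref{c:cont} (for continuous fields) or Corollary \ref{c:cont2} (for discrete ones) guarantee that the event is a continuity event determined by countably many point evaluations, so the approximation step goes through without difficulty.
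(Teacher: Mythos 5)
Your proposal is correct and follows essentially the same route as the paper, whose proof consists of citing Pitt's theorem for the finite-dimensional case and \cite[Lemma 2.1]{drrv21} for the approximation by cylindrical increasing events. Your Gaussian interpolation argument is exactly the standard proof of (the relevant direction of) Pitt's theorem, and your reduction to finite dimensions is the content of the cited approximation lemma, so nothing is missing.
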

\begin{proof}
In the finite-dimensional case this is proven in \cite{pit82}, and see \cite[Lemma 2.1]{drrv21} for approximation arguments that extend the result to continuous fields.
\end{proof}

%%%%%%%%%%%%%%%%%%%%
\bigskip

\bibliographystyle{halpha-abbrv}
\bibliography{sharp}

\begin{thebibliography}{DRRV23}
\expandafter\ifx\csname url\endcsname\relax
  \def\url#1{\texttt{#1}}\fi
\expandafter\ifx\csname doi\endcsname\relax
  \def\doi#1{\burlalt{doi:#1}{http://dx.doi.org/#1}}\fi
\expandafter\ifx\csname urlprefix\endcsname\relax\def\urlprefix{URL }\fi
\expandafter\ifx\csname href\endcsname\relax
  \def\href#1#2{#2}\fi
\expandafter\ifx\csname burlalt\endcsname\relax
  \def\burlalt#1#2{\href{#2}{#1}}\fi

\bibitem[AB87]{ab87}
M.~Aizenman and D.~Barsky.
\newblock Sharpness of the phase transition in percolation models.
\newblock {\em Comm. Math. Phys.}, 108(3):489--526, 1987.

\bibitem[AB99]{ab99}
M.~Aizenman and A.~Burchard.
\newblock H\"{o}lder regularity and dimension bounds for random curves.
\newblock {\em Duke Math. J.}, 99(3):419--453, 1999.

\bibitem[Ale96]{alex96}
K.~Alexander.
\newblock Boundedness of level lines for two-dimensional random fields.
\newblock {\em Ann. Probab.}, 24(4):1653--1674, 1996.

\bibitem[AT07]{at07}
R.~Adler and J.~Taylor.
\newblock {\em Random fields and geometry}.
\newblock Springer, 2007.

\bibitem[AW09]{aw09}
J.-M. Aza{\"{\i}}s and M.~Wschebor.
\newblock {\em Level sets and extrema of random processes and fields}.
\newblock John Wiley \& Sons, Inc., Hoboken, NJ, 2009.

\bibitem[Bau13]{bau13}
R.~Bauerschmidt.
\newblock A simple method for finite range decomposition of quadratic forms and
  {G}aussian fields.
\newblock {\em Probab. Theory Related Fields}, 157(3--4):817--845, 2013.

\bibitem[BG17]{bg17}
V.~Beffara and D.~Gayet.
\newblock Percolation of random nodal lines.
\newblock {\em Publ. Math. IHES}, 126:131--176, 2017.

\bibitem[BGM04]{bgm04}
D.~Brydges, G.~Guadagni, and P.~Mitter.
\newblock Finite range decomposition of {G}aussian processes.
\newblock {\em J. Statist. Phys}, 114(1--2):415--449, 2004.

\bibitem[BGT87]{bgt87}
N.~H. Bingham, C.~M. Goldie, and J.~L. Teugels.
\newblock {\em Regular Variation}.
\newblock Cambridge University Press, 1987.

\bibitem[BLM87]{blm87}
J.~Bricmont, J.~Lebowitz, and C.~Maes.
\newblock Percolation in strongly correlated systems: the massless {G}aussian
  field.
\newblock {\em Jour. Stat. Phys.}, 48(5--6):1249--1268, 1987.

\bibitem[BMM20]{bmm20}
D.~Beliaev, M.~McAuley, and S.~Muirhead.
\newblock Smoothness and monotonicity of the excursion set density of planar
  {G}aussian fields.
\newblock {\em Electron. J. Probab.}, 25(93):1--37, 2020.

\bibitem[BMR20]{bmr20}
D.~Beliaev, S.~Muirhead, and A.~Rivera.
\newblock A covariance formula for topological events of smooth {G}aussian
  fields.
\newblock {\em Ann. Probab.}, 48(6):2845--2893, 2020.

\bibitem[CC86]{cc86}
J.~T. Chayes and L.~Chayes.
\newblock Inequality for the infinite-cluster density in {B}ernoulli
  percolation.
\newblock {\em Phys. Rev. Lett.}, 56(16):1619--1622, 1986.

\bibitem[CN23]{cn21}
A.~Chiarini and M.~Nitzschner.
\newblock Phase transition for level-set percolation of the membrane model in
  dimensions $d \ge 5$.
\newblock {\em J. Stat. Phys.}, 190(3), 2023.

\bibitem[DCT16]{dct16}
H.~Duminil-Copin and V.~Tassion.
\newblock A new proof of the sharpness of the phase transition for {B}ernoulli
  percolation and the {I}sing model.
\newblock {\em Commm. Math. Phys}, 343:725--745, 2016.

\bibitem[DDG21]{ddg22}
J.~Ding, J.~Dub\'{e}dat, and E.~Gwynne.
\newblock Introduction to the {L}iouville quantum gravity metric.
\newblock {\em To appear in Proceedings of the ICM 2022}, available at
  arXiv:2109.01252, 2021.

\bibitem[DGRS23]{dgrs20}
H.~Duminil-Copin, S.~Goswami, P.-F. Rodriguez, and F.~Severo.
\newblock Equality of critical parameters for percolation of {G}aussian free
  field level-sets.
\newblock {\em Duke Math. J.}, 172(5):839--913, 2023.

\bibitem[DHP23]{dhp21}
P.~Dario, M.~Harel, and R.~Peled.
\newblock Random-field random surfaces.
\newblock {\em Probab. Theory Related Fields}, 186:91--158, 2023.

\bibitem[DM23a]{dm21b}
V.~Dewan and S.~Muirhead.
\newblock Mean-field bounds for {P}oisson-{B}oolean percolation.
\newblock {\em Electron. J. Probab.}, 28:1--24, 2023.

\bibitem[DM23b]{dm21}
V.~Dewan and S.~Muirhead.
\newblock Upper bounds on the one-arm exponent for dependent percolation
  models.
\newblock {\em Probab. Theory Related Fields}, 185(1-2):41--88, 2023.

\bibitem[DPR18]{dpr18}
A.~Drewitz, A.~Pr\'{e}vost, and P.-F. Rodriguez.
\newblock The sign clusters of the massless {G}aussian free field percolate on
  $\mathbb{Z}^d$, $d \ge 3$ (and more).
\newblock {\em Comm. Math. Phys.}, 362(2):413--546, 2018.

\bibitem[DPR23]{dpr23}
A.~Drewitz, A.~Pr\'{e}vost, and P.-F. Rodriguez.
\newblock Critical exponents for a percolation model on transient graphs.
\newblock {\em Invent. Math.}, 232:229--299, 2023.

\bibitem[DRRV23]{drrv21}
H.~Duminil-Copin, A.~Rivera, P.-F. Rodriguez, and H.~Vanneuville.
\newblock Existence of unbounded nodal hypersurface for smooth {G}aussian
  fields in dimension $d \ge 3$.
\newblock {\em Ann. Probab.}, 51(1):228--276, 2023.

\bibitem[DRS14]{drs14}
A.~Drewitz, B.~R\'{a}th, and A.~Sapozhnikov.
\newblock On chemical distances and shape theorems in percolation models with
  long-range correlations.
\newblock {\em J. Math. Phys.}, 55(8):083307, 2014.

\bibitem[DRT19a]{dcrt19a}
H.~Duminil-Copin, A.~Raoufi, and V.~Tassion.
\newblock Sharp phase transition for the random-cluster and {P}otts models via
  decision trees.
\newblock {\em Ann. Math.}, 189(1):75--99, 2019.

\bibitem[DRT19b]{dcrt19b}
H.~Duminil-Copin, A.~Raoufi, and V.~Tassion.
\newblock Exponential decay of connection probabilities for subcritical
  {V}oronoi percolation in {$\R^d$}.
\newblock {\em Probab. Theory Related Fields}, 173:479--490, 2019.

\bibitem[DRT20]{dcrt20}
H.~Duminil-Copin, A.~Raoufi, and V.~Tassion.
\newblock Subcritical phase of $d$-dimensional {P}oisson-{B}oolean percolation
  and its vacant set.
\newblock {\em Ann. H. Lebesgue}, 3:677--700, 2020.

\bibitem[DT22]{dt22}
B.~Dembin and V.~Tassion.
\newblock Almost sharp sharpness for {P}oisson {B}oolean percolation.
\newblock {\em Preprint}, available at arXiv:2209.00999, 2022.

\bibitem[FvdH17]{fh17}
R.~Fitzner and R.~van~der Hofstad.
\newblock Mean-field behavior for nearest-neighbor percolation in $d > 10$.
\newblock {\em Electron. J. Probab.}, 22:65 pp., 2017.

\bibitem[GKR88]{gkr88}
A.~Gandolfi, M.~Keane, and L.~Russo.
\newblock On the uniqueness of the infinite occupied cluster in dependent
  two-dimensional site percolation.
\newblock {\em Ann. Probab.}, 16(3):1147--1157, 1988.

\bibitem[Gri99]{gr99}
G.~Grimmett.
\newblock {\em Percolation}.
\newblock Springer: Berlin, Germany, 1999.

\bibitem[GRS22]{grs21}
S.~Goswami, P.-F. Rodriguez, and F.~Severo.
\newblock On the radius of {G}aussian free field excursion clusters.
\newblock {\em Ann. Probab.}, 50(5):1675--1724, 2022.

\bibitem[HS90]{hs90}
T.~Hara and G.~Slade.
\newblock Mean-field critical behaviour for percolation in high dimensions.
\newblock {\em Comm. Math. Phys.}, 128:333--391, 1990.

\bibitem[HS02]{hs02}
C.~Hainzl and R.~Seiringer.
\newblock General decomposition of radial functions on $\mathbb{R}^n$ and
  applications to {N}-body quantum systems.
\newblock {\em Lett. Math. Phys.}, 61(1):75--84, 2002.

\bibitem[IK91]{ik91}
M.~Isichenko and J.~Kalda.
\newblock Statistical topography. {I}. {F}ractal dimension of coastlines and
  number-area rule for {I}slands.
\newblock {\em J. Nonlinear Sci.}, 1:255--277, 1991.

\bibitem[JGRS20]{jgrs20}
N.~Javerzat, S.~Grijalva, A.~Rosso, and R.~Santachiara.
\newblock Topological effects and conformal invariance in long-range correlated
  random surfaces.
\newblock {\em SciPost Phys.}, 9(050), 2020.

\bibitem[KT23]{kt20}
L.~K\"{o}hler-Schindler and V.~Tassion.
\newblock Crossing probabilities for planar percolation.
\newblock {\em Duke Math. J.}, 172(4):809--838, 2023.

\bibitem[LO13]{lo13}
N.~Leonenko and A.~Olenko.
\newblock Tauberian and {A}belian theorems for long-range dependent random
  fields.
\newblock {\em Methodol. Comput. Appl. Probab.}, 15:715--742, 2013.

\bibitem[Men86]{men86}
M.~Menshikov.
\newblock Coincidence of critical points in percolation problems.
\newblock {\em Sov. Math. Dokl.}, 33:856--859, 1986.

\bibitem[ML06]{ml06}
V.~I. Marinov and J.~L. Lebowitz.
\newblock Percolation in the harmonic crystal and voter model in three
  dimensions.
\newblock {\em Phys. Rev. E}, 74(031120), 2006.

\bibitem[MRV23]{mrv20}
S.~Muirhead, A.~Rivera, and H.~Vanneuville.
\newblock The phase transition for planar {G}aussian percolation models without
  {FKG}.
\newblock {\em Ann. Probab.}, 51(5):1785--1829, 2023.
\newblock With an appendix by L. K\"{o}hler-Schindler.

\bibitem[MS22]{ms22}
S.~Muirhead and F.~Severo.
\newblock Percolation of strongly correlated {G}aussian fields {I}. {D}ecay of
  subcritical connection probabilities.
\newblock {\em Preprint}, available at arXiv:2206.10723, 2022.

\bibitem[Mui23]{mui23}
S.~Muirhead.
\newblock A sprinkled decoupling inequality for {G}aussian processes and
  applications.
\newblock {\em Electron. J. Probab.}, 28:1--25, 2023.

\bibitem[MV20]{mv20}
S.~Muirhead and H.~Vanneuville.
\newblock The sharp phase transition for level set percolation of smooth planar
  {G}aussian fields.
\newblock {\em Ann. I. Henri Poincar\'e Probab. Stat.}, 56(2):1358--1390, 2020.

\bibitem[NS16]{ns16}
F.~Nazarov and M.~Sodin.
\newblock Asymptotic laws for the spatial distribution and the number of
  connected components of zero sets of {G}aussian random functions.
\newblock {\em J. Math. Phys. Anal. Geo.}, 12(3):205--278, 2016.

\bibitem[OSSS05]{osss05}
R.~O'Donnell, M.~Saks, O.~Schramm, and R.~Servedio.
\newblock Every decision tree has an influential variable.
\newblock In {\em 46th Annual IEEE Symposium on Foundations of Computer Science
  (FOCS'05)}, pages 31--39, 2005.

\bibitem[Pit82]{pit82}
L.~Pitt.
\newblock Positively correlated normal variables are associated.
\newblock {\em Ann. Probab.}, 10(2):496--499, 1982.

\bibitem[PR15]{pr15}
S.~Popov and B.~R\'{a}th.
\newblock On decoupling inequalities and percolation of the excursion sets of
  the {G}aussian free field.
\newblock {\em J. Stat. Phys.}, 159:312--320, 2015.

\bibitem[PT15]{pt15}
S.~Popov and A.~Teixeira.
\newblock Soft local times and decoupling of random interlacements.
\newblock {\em J. Eur. Math. Soc.}, 17(10):2545--2593, 2015.

\bibitem[RS13]{rs13}
P.-F. Rodriguez and A.-S. Sznitman.
\newblock Phase transition and level-set percolation for the {G}aussian free
  field.
\newblock {\em Comm. Math. Phys.}, 320(2):571--601, 2013.

\bibitem[RV19]{rv19}
A.~Rivera and H.~Vanneuville.
\newblock Quasi-independence for nodal lines.
\newblock {\em Ann. Henri Poincar{\'e}}, 55(3):1679--1711, 2019.

\bibitem[Sak03]{sak03}
H.~Sakagawa.
\newblock Entropic repulsion for a {G}aussian lattice field with certain finite
  range interaction.
\newblock {\em J. Math. Phys.}, 44(7):2939--2951, 2003.

\bibitem[Sch22]{sch22}
F.~Schweiger.
\newblock Finite range decompositions of {G}aussian fields with applications to
  level-set percolation.
\newblock {\em Preprint}, available at arXiv:2212.05756, 2022.

\bibitem[Sev22]{s21}
F.~Severo.
\newblock Sharp phase transition for {G}aussian percolation in all dimensions.
\newblock {\em Ann. Henri Lebesgue}, 5:987--1008, 2022.

\bibitem[SW01]{sw01}
S.~Smirnov and W.~Werner.
\newblock Critical exponents for two-dimensional percolation.
\newblock {\em Math. Res. Lett.}, 8(5):729--744, 2001.

\bibitem[Van22]{van22}
H.~Vanneuville.
\newblock Sharpness of {B}ernoulli percolation via couplings.
\newblock {\em Preprint}, available at arXiv:2201.08223, 2022.

\bibitem[Wei84]{w84}
A.~Weinrib.
\newblock Long-range correlated percolation.
\newblock {\em Phys. Rev. B}, 29(1):387, 1984.

\bibitem[Wen05]{wen05}
H.~Wendland.
\newblock {\em Scattered Data Approximation}.
\newblock Cambridge Monographs on Applied and Computational Mathematics.
  Cambridge University Press, 2005.

\end{thebibliography}

\end{document}